\documentclass[12pt]{amsart}

\usepackage{amsmath,amssymb,amsfonts,graphics}
\usepackage[all]{xy}

\usepackage{color}
\usepackage{xcolor}
\usepackage{comment}
\usepackage{hyperref}
\newtheorem{thm}{Theorem}[section]
\newtheorem{cor}[thm]{Corollary}

\newtheorem{lem}[thm]{Lemma}
\newtheorem{prop}[thm]{Proposition}
\newtheorem{thmx}{Theorem}

\theoremstyle{definition}
\newtheorem{defn}[thm]{Definition}
\theoremstyle{remark}
\newtheorem{rem}[thm]{Remark}

\numberwithin{equation}{section}

\newcommand{\R}{\mathbb{R}}
\newcommand{\N}{\mathbb{N}}
\newcommand{\F}{\mathbb{F}}

\newcommand{\C}{\mathbb{C}}

\newcommand{\Aut}{\textup{Aut}}

\newcommand{\cpf}{\rtimes_{\textup{PF}_p} G}

\newcommand{\om}{\omega}

\newcommand{\supp}{\text{supp}}

\newcommand{\la}{\langle}
\newcommand{\ra}{\rangle}

\newcommand{\mc}{\mathcal}
\newcommand{\wt}{\widetilde}
\newcommand{\ov}{\overline}
\newcommand{\lin}{\operatorname{span}}
\newcommand{\md}{d}
\newcommand{\eps}{\varepsilon}

\newcommand{\PF}[3]{\ensuremath{\mathrm{PF}_{#1}^*\left(#2; #3\right)}}

\DeclareMathOperator{\LL}{L}

\title[$PF_p$-crossed product functors]{Crossed product functors associated
to $\ell^p$-pseudofunctions}

\author[Krajczok]{Jacek Krajczok}
\address{Vrije Universiteit Brussel, Pleinlaan 2, 1050 Brussels, Belgium}
\email{jacek.krajczok@vub.be}

\author[Samei]{Ebrahim Samei}
\address{Department of Mathematics and Statistics, University of Saskatchewan, Saskatoon, Saskatchewan, S7N 5E6, Canada}
\email{ebrahim.samei@usask.ca}

\author[Siebenand]{Timo Siebenand}
\address{}
\email{timosiebenand@outlook.de}

\author[Skalski]{Adam Skalski}
\address{Institute of Mathematics of the Polish Academy of Sciences, ul. {\'S}niadeckich 8, 00–656, Warszawa, Poland}
\email{a.skalski@impan.pl}

\begin{document}

\begin{abstract}
We show that the $\ell^p$-pseudofunctions, which were recently shown to lead to exotic completions of group $C^*$-algebras  by Wiersma and the second named author, can be used to construct well-behaved crossed product functors in the sense of Buss, Echterhoff and Willett. The construction proceeds via introducing certain  Banach algebras, related to operators acting on Hilbert valued $\ell^p$-spaces, which a priori depend on the choice of a Hilbert space representation of the underlying C*-algebra. We prove that, in fact, the resulting algebras are isomorphic (with the isomorphism constant depending only on $p$), and hence their C*-envelopes are isometrically isomorphic. This, in particular, means that the construction genuinely generalises the one studied earlier in the group case. The tools we develop allow us to show that for certain non-amenable actions, the resulting crossed product completions must indeed be exotic.
\end{abstract}

\subjclass[2020]{Primary 47L65; Secondary 43A15; 46L05; 46M15}

\keywords{Crossed product functors; $\ell^p$-pseudofunctions; exotic $C^*$-completions}

\maketitle

\section{Introduction}

Discrete, or more generally, locally compact groups have been a source of interesting and important examples of operator algebras since the beginning of the theory of the latter in the 1940s. In particular, the constructions of the universal and the reduced C*-algebra of a discrete group $G$ serve as archetypical examples of `abstract' and `concrete' C*-algebras, and to the very day play a central position in the study of operator algebras and their interactions with the geometric group theory. Recent years have brought a lot of interest in the so-called `exotic' group C*-algebras, i.e.\ C*-completions of the group ring $\C[G]$ sitting naturally between the universal and the reduced one. One of the starting points of their study was the article \cite{BG}, where Brown and Guentner introduced the so-called $\ell^p$-completions of the group ring, which spurred a lot of further activity (see \cite{SW2} and references therein). Choosing a completion as above corresponds to fixing a certain family of (weak equivalence classes of) Hilbert space representations of $G$, and indeed most of the constructions of exotic group C*-algebras begin by setting up such a family. The work of the second named author and Wiersma in \cite{SW2}, inspired by their earlier article \cite{SW1}, took a different perspective. In the first step, they considered the shift representation of $G$ on $\ell^p(G)$, with fixed $p \in [1, \infty)$, leading in a natural way to a Banach $*$-algebra $\textup{PF}_p^*(G)$, and only in the second step the relevant C*-algebra appeared, as the C*-envelope of $\textup{PF}_p^*(G)$. One should note that in several cases $C^*(\textup{PF}_p^*(G))$ coincides with the Brown-Guentner algebra $C^*_{\ell^p}(G)$, and in fact it is still not known whether they are ever different.

On the other hand, soon after the work of Brown and Guentner, the article \cite{BGM}, motivated by looking for an appropriate formulation of the Baum-Connes conjecture with coefficients, initiated a systematic study of the `exotic' crossed products. Again, it had been known for a long time that given an action of $G$ on a C*-algebra $A$, one can form canonical universal/reduced crossed product algebras. The key insight of \cite{BGM}, later fleshed out primarily in \cite{BEW}, was that one should not look at potentially exotic crossed product completions separately for each C*-dynamical system, but rather view them simultaneously, as a prescription for finding C*-norms for all possible $G$-actions. This has led to a notion of a \emph{crossed product functor}, with several notable examples related to the Brown-Guentner construction, but also to the study of exotic coactions initiated in \cite{KLQNYJM}. The article \cite{BEW} and its successors, such as \cite{AEES}, introduced also several natural and important properties of crossed product functors, among which we would like to distinguish being a \emph{correspondence functor} (the property roughly means that the construction behaves well with respect to $G$-equivariant C*-correspondences, but admits also several equivalent reformulations).

The aim of our paper is to place the construction of the $C^*(\textup{PF}_p^*(G))$ in the framework of crossed product functors. To that end, similarly to the path chosen in \cite{SW2}, to any action of a discrete group $G$ on a $C^*$-algebra $A$, we associate first a Banach $*$-algebra $\textup{PF}^*_p(G;A)$, constructed via a Banach space representation of the relevant convolution algebra on the $\ell^p$-space with values in a Hilbert space. Although the construction demands a choice of a representation of $A$, we exploit the Kahane-Khintchine inequality and the Voiculescu's theorem to prove that different choices lead to isomorphic objects, with the isomorphism bound dependent only on $p$.  Then the desired C*-algebra arrives as the C*-envelope of $\textup{PF}^*_p(G;A)$. Let $\rtimes_{\textup{PF}_p} G$ denote the functor $C^* \circ \textup{PF}^*_p(G;\cdot)$. The following theorem summarizes some of the main results of our work (see Theorem \ref{thm:coincide}, Corollary \ref{cor:indepBanach}, Theorem \ref{T:C* envlope PF cross product}, and Theorem \ref{thm:projectionprop}).

\begin{thmx} \label{thmA}
	The functor $\rtimes_{\textup{PF}_p} G$
	from the category of $G$-C*-algebras to the category of C*-algebras is a crossed product correspondence functor. The C*-algebra $C^*(\textup{PF}^*_p(G;A))$ is independent from the choice of the representation for $A$. Moreover, $C^*(\textup{PF}^*_p(G;\C))$ is canonically isometrically isomorphic to the C*-algebra  $C^*(\textup{PF}^*_p(G))$ introduced by Wiersma and the second named author in \cite{SW2}.
\end{thmx}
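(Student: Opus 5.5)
We prove the three assertions of Theorem \ref{thmA} in the order: independence of representation, identification with the group case, and the functorial properties. Fix a $G$-C*-algebra $A$, a faithful representation $\pi\colon A\to B(H)$, and $p\in[1,\infty)$. Let $C_c(G,A)$ act on $\ell^p(G;H)$ by the covariant pair $(\tilde\pi,\lambda_p)$, where $(\tilde\pi(a)\xi)(s)=\pi(\alpha_{s^{-1}}(a))\xi(s)$ and $\lambda_p$ is the left shift. Then $\textup{PF}^*_p(G;A)$ is the completion of $C_c(G,A)$ in the norm
\[
\|f\|=\max\{\|f\|_{\pi,p},\ \|f^*\|_{\pi,p}\},
\]
where $\|\cdot\|_{\pi,p}$ is the operator norm on $\ell^p(G;H)$.

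\emph{Independence of $\pi$.} Let $\pi$, $\rho$ be two faithful representations. By Voiculescu's theorem, after passing to infinite amplifications (or using nondegenerate and essential versions), $\rho\oplus\pi^{\infty}$ is approximately unitarily equivalent to $\pi^{\infty}$. Unitary conjugation on the Hilbert fibres commutes with the shift, so it induces an isometry of $\ell^p(G;H)$. Approximate equivalence therefore passes to norm limits for $f\in C_c(G,A)$, since $f$ has finitely many coefficients.

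Two comparisons remain.
\begin{itemize}
\item[(i)] For a direct sum, $\ell^p(G;H\oplus K)\supseteq\ell^p(G;H)$ contractively complemented. This gives $\|f\|_{\pi}\le\|f\|_{\pi\oplus\rho}$.
\item[(ii)] Amplification: $\|f\|_{\pi^{\infty}}\le C_p\|f\|_{\pi}$. This is the Kahane--Khintchine step. Identify $\ell^p(G;H\otimes\ell^2)$ with $\ell^p(G;\ell^2(\N;H))$ and replace the $\ell^2$-norm in the fibre by an average $\big(\mathbb E\|\sum_n\varepsilon_n\xi_n\|^p\big)^{1/p}$ over Rademacher or Gaussian signs. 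These averages are equivalent up to constants depending only on $p$. An operator built diagonally from $f$ acts on each random sum $\sum\varepsilon_n\xi_n\in\ell^p(G;H)$ with norm at most $\|f\|_\pi$.
\end{itemize}
Combining (i) and (ii) gives $C_p^{-1}\|f\|_\rho\le\|f\|_\pi\le C_p\|f\|_\rho$ in both directions. The constants involve only the Khintchine constants, so they depend only on $p$, and the same holds for the adjoint parts of the norm. Hence the identity map on $C_c(G,A)$ extends to a topological $*$-isomorphism of Banach $*$-algebras. Its C*-envelopes agree isometrically, because the C*-envelope norm is $\sup\|\sigma(f)\|$ over $*$-representations bounded on the Banach algebra, and that class is unchanged under equivalent norms. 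I expect (ii) to be the main obstacle: one has to apply Kahane--Khintchine uniformly in the $G$-variable and also in the $A$-coefficients, which act differently on each fibre.

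\emph{The case $A=\C$.} Here $\ell^p(G;H)$ with $\pi$ the identity representation on $H=\C$ is just $\ell^p(G)$, so $\textup{PF}^*_p(G;\C)=\textup{PF}^*_p(G)$ literally. The independence result then gives the canonical isometric isomorphism of C*-envelopes.

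\emph{Crossed product correspondence functor.} First check that the norm sits between the reduced and the universal norms, so that the quotient maps exist.
\begin{itemize}
\item The bound $\|f\|\le\|f\|_{\ell^1}$ gives domination by the universal norm after passing to C*-envelopes.
\item For the reduced bound, every $*$-representation of $\textup{PF}^*_p$ should dominate $\lambda$. I would obtain this by an interpolation argument between $p$ and its conjugate $p'$, giving $\ell^2$-boundedness from the max norm, as in \cite{SW2}.
\end{itemize}
Functoriality for equivariant $*$-homomorphisms follows from the independence of $\pi$: choose compatible representations so that the induced map is contractive. For the correspondence property, I would use the characterisation of \cite{BEW} via equivariant hereditary subalgebras and Morita compatibility. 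This requires showing that $A\mapsto\textup{PF}^*_p(G;A)$ restricts well to corners $pM_n(A)p$ and to $A\otimes\mathcal K$. That in turn follows from representing everything on a common Hilbert space and applying the independence result.
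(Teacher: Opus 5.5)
\textbf{Main gap: the correspondence property.} Your independence argument follows the paper's route: Kahane's inequality for the amplification $\pi\otimes I_{\mathcal{K}}$, with $C_p=K_{p,2}K_{2,p}$, followed by Voiculescu's theorem applied to an infinite amplification. Your treatment of $A=\mathbb{C}$ is a legitimate shortcut. The C*-level claim only needs the identity map to be a Banach isomorphism, so you bypass the paper's tensor-power argument (Proposition~\ref{prop:isometry}), which proves the stronger Banach-level isometry.

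The genuine gap is in the correspondence part. Whichever characterisation from \cite{BEW} you use, you must prove injectivity of a map between \emph{enveloping C*-algebras}. For a $G$-invariant projection $e$ and $x\in\mathrm{PF}^*_p(G;eAe)$, this means
$\|x\|_{C^*(\mathrm{PF}^*_p(G;eAe))}\le\|x\|_{C^*(\mathrm{PF}^*_p(G;A))}$.
Representing everything on a common Hilbert space and invoking independence only compares Banach norms. In fact $\mathrm{PF}^*_p(G;B)\to\mathrm{PF}^*_p(G;A)$ is isometric for \emph{every} $G$-invariant C*-subalgebra $B$ (Proposition~\ref{cp:cp:hereAlgPf}). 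A Banach-level isometric embedding, however, says nothing about C*-envelopes. For example, $\ell^1(G;\mathbb{C}1)\subseteq\ell^1(G;\ell^\infty(G))$ is isometric, yet for exact non-amenable $G$ the induced map $C^*_u(G)\to\ell^\infty(G)\rtimes_u G=\ell^\infty(G)\rtimes_r G$ is not injective. So you need an argument specific to corners that dominates $*$-representations of the subalgebra by $*$-representations of the large algebra. The same objection applies to any Morita-type identification involving $A\otimes\mathcal{K}$, which is again a statement about representations, not Banach norms.

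\textbf{The missing idea: extending states through the compression $\mathbb{E}(a)=eae$.} Take a state $\varphi$ of $\mathrm{PF}^*_p(G;eAe)$ that realises the C*-envelope norm of $x^*x$, and set $\tilde\varphi=\varphi\circ(\mathbb{E}\rtimes G)$.
\begin{itemize}
\item Boundedness of $\mathbb{E}\rtimes G$ on $\mathrm{PF}$: dilate an arbitrary representation of $eAe$ to one of $A$ (Arveson plus Stinespring) and compress (Lemma~\ref{lem:condexp}).
\item Positivity of $\tilde\varphi$: on $\ell^1(G;A)$ it agrees with $\varphi''\circ(\mathbb{E}\rtimes_u G)$, a positive functional on $A\rtimes_u G$ because $\mathbb{E}\rtimes_u G$ is completely positive. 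Positivity then extends by density.
\item Conclusion: the GNS representation of $\tilde\varphi$, together with a contractive approximate unit (Proposition~\ref{prop:approximate}), bounds the $eAe$-envelope norm of $x$ by the $A$-envelope norm.
\end{itemize}

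\textbf{Two smaller repairs in the independence step.}
\begin{itemize}
\item Voiculescu's theorem needs separability. First pass to the countable subgroup generated by the supports, a separable invariant subalgebra, and separable subspaces on which the restricted representation is still faithful and misses the compacts.
\item Approximate unitary equivalence does not give operator-norm convergence ``because $f$ has finitely many coefficients''. The diagonal operators involve all translates $\alpha_{h^{-1}}(f(g))$, $h\in G$, which form a non-compact set. Test instead on finitely supported vectors, where only finitely many $h$ occur, and pass to the limit with a Fatou-type argument, as the paper does.
\end{itemize}
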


One should note that this implies, in particular, that the functor $\rtimes_{\textup{PF}_p} G$ is indeed different from the Brown-Guentner functor $\rtimes_{BG_p} G$ (as the latter is not a correspondence functor). Furthermore, we show that if the action of $G$ on $A$ admits an invariant state, then the C*-algebra $C^*(\textup{PF}_p^*(G))$ embeds canonically into $C^*(\textup{PF}^*_p(G;A))$. This means that we can directly use several results of \cite{SW2} to show that the functor we consider here is, in general, genuinely exotic, by which we mean that none of the canonical surjections $A\rtimes_u G \to C^*(\textup{PF}^*_p(G;A))$ and $C^*(\textup{PF}^*_p(G;A)) \to A\rtimes_r G$ is isometric (see Theorem \ref{T:exotic C*-alg imply exotic cross product}). We also exhibit certain instances where we can directly prove, using the random walk and Furstenberg entropy techniques introduced in a similar context in [ASdLSW$_{1-2}$], that  $C^*(\textup{PF}^*_p(G;A))$ are exotic without assuming the existence of an invariant state (see Theorem \ref{T:nontrivial PF_p cross C* alg using entropy} and Remark \ref{R:nontrivial PF_p cross C* alg using entropy}).

The detailed plan of the paper is as follows: after this introduction, in Section \ref{sec:cpf} we recall the basic aspects of the theory of crossed product functors, mostly following \cite{BEW}. Section \ref{sec:Banach} is devoted to the study of Banach algebras $\textup{PF}^*_p(G;A)$ associated to a $G$-C*-algebra $A$ and $p \in (1, \infty)$. Here we show that the construction (up to a Banach algebra isomorphism) does not depend on the choice of the representation of $A$, and establish several functorial properties of the functor $\textup{PF}^*_p(G;\cdot)$. This sets the ground for Section \ref{sec:C*}, where we define our C*-algebraic crossed product functor   $\rtimes_{\textup{PF}_p} G$ and prove that it is indeed a correspondence functor.
Finally, in Section \ref{sec:examples}, we treat several examples where $\rtimes_{\textup{PF}_p} G$ leads to genuinely exotic crossed products.

Throughout $\otimes$ denotes the minimal C*-tensor product of C*-algebras. Hilbert space scalar products are linear on the right side. We will use $G$ to denote a discrete group.

\section{Crossed product functors and their properties} \label{sec:cpf}

We shall describe here briefly basic aspects of the theory of crossed product functors, as introduced by Buss, Echterhoff and Willett in \cite{BEW}. Note that the theory has been developed for general locally compact groups, but we shall stick to the discrete context.

Let $G$ be a discrete group. By a $G$-C*-algebra we understand a C*-algebra $A$ equipped with a homomorphism $\alpha\colon G \to \Aut(A)$. We shall denote by $A\rtimes_{\mathrm{alg}}G$ the \emph{algebraic crossed product}, so the space of finitely supported functions on $G$ with values in $A$ equipped with the $*$-algebra structure induced by the action $\alpha$; note that sometimes it will be useful to speak also about the corresponding `$\ell^1$-algebra', i.e.\ $\ell^1(G;A)$.
We shall often speak of the category of $G$-C*-algebras, with the morphisms given by  $G$-equivariant $*$-homomorphisms, and of the category of C*-algebras with $*$-homomorphisms as morphisms.
Note that if $A$ is a $G$-C*-algebra as above and
$\mathcal{M}(A)$ is the multiplier algebra of $A$, then we also obtain a canonical homomorphism $\overline{\alpha}\colon G \to \mathrm{Aut}(\mathcal{M}(A))$. We can thus talk about $G$-equivariant $*$-homomorphisms $\varphi \colon C\to \mathcal{M}(A)$
(here $C$ is another $G$-C*-algebra with the action $\gamma$) by saying $\varphi$ is
$G$-equivariant if
\begin{align*}
	\varphi (\gamma_s(c)) = \overline{\alpha}_s (\varphi(c))
\end{align*}
for all $s\in G$ and $c\in C$.

Let $-\rtimes_{u} G$  (resp.\ $-\rtimes_{r} G$) denote  the well-known
universal crossed product functor (resp.\ reduced crossed product functor) from the category of $G$-C*-algebras to the category of C*-algebras.
There is a natural transformation $\Lambda\colon -\rtimes_{u}G \Rightarrow -\rtimes_{r}G$
consisting of surjective $*$-homomorphisms such that
$\Lambda_A\vert_{A\rtimes_{\mathrm{alg}}G} = \mathrm{id}_{A\rtimes_{\mathrm{alg}}G}$.

The notion of a crossed product functor formalizes the idea of having  a `well-behaved' method of constructing `intermediate' crossed products.

\begin{defn}
	A \emph{crossed product functor} $-\rtimes_\mu G$ is a functor
from the category of $G$-C*-algebras to the category of C*-algebras, together with natural transformations
	\begin{align*}
		q\colon -\rtimes_u G \Rightarrow -\rtimes_\mu G \mbox{ and } s\colon	 -\rtimes_\mu G
		\rightarrow -\rtimes_r G
	\end{align*}
	consisting of surjective $*$-homomorphisms such that $s\circ 	q = \Lambda$.
\end{defn}
Note that the $*$-homomorphisms $q_A\colon A\rtimes_u G\to A\rtimes_\mu G$ and $s_A\colon
	A\rtimes_\mu G \to A\rtimes_r G$ for a $G$-C*-algebra $A$ are just the continuous
	extensions of the identity map $\mathrm{id}_{A\rtimes_{\mathrm{alg}} G}$. % To be more precise, the continuous extension of
	%$\mathrm{id}\colon (A\rtimes_{\mathrm{alg}}G,\|\cdot\|_{A\rtimes_{u}G})
%	\to (A\rtimes_{\mathrm{alg}}G,\|\cdot\|_{A\rtimes_\mu G} )$ and
%	$\mathrm{id}\colon (A\rtimes_{\mathrm{alg}}G,\|\cdot\|_{A\rtimes_{\mu}G})
%	\to (A\rtimes_{\mathrm{alg}}G,\|\cdot\|_{A\rtimes_r G} )$)
Whenever convenient, we will write also $\rtimes_\mu G$ instead of $-\rtimes_\mu G$.

Let $A$ be a $G$-C*-algebra. The universal crossed product $A\rtimes_u G$ comes with
a (universal) covariant representation $(\iota_A,\iota_G)\colon (A,G) \to \mathcal{M}(A\rtimes_u G)$.
Similarly we have the reduced/regular covariant representation $(\iota_{A,r},\iota_{G,r})\colon (A,G) \to \mathcal{M}(A\rtimes_r G)$.

Let $-\rtimes_\mu G$ be any crossed product functor. We write
\begin{align*}
	\iota_{A,\mu} &:= q_A \circ \iota_A \colon A \to \mathcal{M}(A\rtimes_\mu G);\\
	\iota_{G,\mu} &:= q_A \circ \iota_G \colon G\to \mathcal{M}(A\rtimes_\mu G).
\end{align*}
Furthermore, we write $C^*_\mu(G)$ for $\mathbb{C}\rtimes_\mu G$.
As is well known, the maps $\iota_{G}$, and respectively $\iota_{G,r}$, extend canonically to $*$-homomorphisms
\begin{align*}
	\iota_{G}\colon C^*_u(G) \to \mathcal{M}(A\rtimes_u G), \quad
	\iota_{G,r} \colon C^*_r(G) \to \mathcal{M}(A\rtimes_r G)
\end{align*}
given by
\begin{align*}
	(\iota_{G,\mu}(f) g)(t) = \sum_{s \in G} f(s)\alpha_s(g(s^{-1}t))
\end{align*}
for $f\in C_c(G)$, $g\in A\rtimes_{\mathrm{alg}}G$ and $t\in G$.

We are ready to formulate a first (relatively) weak property
for a crossed product functor.
\begin{defn}
	Let $-\rtimes_\mu G$ be a crossed product functor. We say that $-\rtimes_\mu G$
	\begin{itemize}
		\item[(i)] is \emph{functorial for generalised morphisms}
		if  for all $G$-C*-algebras $A$ and $B$ and every $G$-equivariant $*$-homomorphism
		$\varphi\colon A\to \mathcal{M}(B)$, there exists a $*$-homomorphism
		\begin{align*}
			\varphi\rtimes_\mu G \colon A\rtimes_\mu G \to \mathcal{M}(B\rtimes_\mu G)
		\end{align*}
		given by
		\begin{align*}
			((\varphi\rtimes_\mu G )(f) g )(t) =
			\sum_{s \in G} \varphi(f(s))\alpha_s(g(s^{-1}t))
		\end{align*}
		for all $f\in A\rtimes_{\mathrm{alg}}G $, $g\in B\rtimes_{\mathrm{alg}}G$ and
		$t\in G$;
		\item[(ii)] has the \emph{ideal property} if for all
		$G$-C*-algebras $A$ and all $G$-invariant closed ideals $I$ of $A$ the inclusion map
		$\iota\colon I \to A$ yields an injective $*$-homomorphism
		$\iota\rtimes_\mu G\colon I\rtimes_\mu G \to A\rtimes_\mu G$.
	\end{itemize}
\end{defn}

In fact, the properties above are equivalent.

\begin{prop}\cite[Lemma 3.3]{BEW}
	Let $-\rtimes_\mu G$ be a crossed product functor. Then $\rtimes_\mu G$ is functorial
	for generalised morphisms if and only if $\rtimes_\mu G$ has the ideal property.

\end{prop}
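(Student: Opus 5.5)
For the forward implication (functoriality for generalised morphisms implies the ideal property), I would take a $G$-invariant ideal $I\subseteq A$ and use the fact that $I$ is an essential ideal in an auxiliary algebra. Form the equivariant $*$-homomorphism $\varphi\colon A\to\mathcal M(I)$, $\varphi(a)b=ab$, which is $G$-equivariant because $I$ is invariant. Functoriality gives
\[
\varphi\rtimes_\mu G\colon A\rtimes_\mu G\to\mathcal M(I\rtimes_\mu G).
\]
On $I\rtimes_{\mathrm{alg}}G$ the composite $(\varphi\rtimes_\mu G)\circ(\iota\rtimes_\mu G)$ is left multiplication. Hence for $f\in I\rtimes_{\mathrm{alg}}G$, applying the composite and multiplying by $f^*$ gives
\[
\|f\|^2_{I\rtimes_\mu G}=\|ff^*\|=\|(\varphi\rtimes_\mu G)(\iota\rtimes_\mu G(f))\,f^*\|\le\|\iota\rtimes_\mu G(f)\|_{A\rtimes_\mu G}\,\|f\|.
\]
This shows that $\iota\rtimes_\mu G$ is isometric on a dense subalgebra, hence injective.

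For the reverse implication (ideal property implies functoriality), let $\varphi\colon A\to\mathcal M(B)$ be equivariant. I would pass to an auxiliary $G$-C*-algebra $C\subseteq\mathcal M(B)$, namely the C*-algebra generated by $\varphi(A)$ and $B$. It is $G$-invariant under $\overline\beta$, though possibly not continuous as a $G$-action, which is harmless since $G$ is discrete. The algebra $B$ is an invariant ideal in $C$. The map $\varphi\colon A\to C$ is a genuine equivariant $*$-homomorphism, so the functor gives $\varphi\rtimes_\mu G\colon A\rtimes_\mu G\to C\rtimes_\mu G$. The ideal property makes $B\rtimes_\mu G$ an ideal in $C\rtimes_\mu G$; it is an ideal on the algebraic level and the closure is preserved by injectivity. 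There is then the canonical map $C\rtimes_\mu G\to\mathcal M(B\rtimes_\mu G)$ given by left multiplication on the ideal. Composing yields the desired map, and its formula on $A\rtimes_{\mathrm{alg}}G$ acting on $B\rtimes_{\mathrm{alg}}G$ is the stated convolution formula, which I would check directly on elementary tensors $a\delta_s$ and $b\delta_t$.

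I expect the main obstacle to be that $B\rtimes_\mu G$ embeds as an ideal, not merely as a subalgebra, in $C\rtimes_\mu G$. Injectivity comes from the ideal property, and the closure of the ideal $B\rtimes_{\mathrm{alg}}G$ in $C\rtimes_\mu G$ is an ideal. The norm the ambient algebra induces on it must coincide with the $\mu$-norm of $B$, and this is exactly where injectivity is used. A secondary point to check is that the resulting map into multipliers is well defined on the completion. This holds because the homomorphism from $C\rtimes_\mu G$ to the multipliers of an ideal is automatically contractive.
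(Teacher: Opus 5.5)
Your proof is correct and is essentially the standard argument behind \cite[Lemma 3.3]{BEW}, which the paper cites without reproducing. For the forward direction, composing with the canonical map $A\to\mathcal{M}(I)$ gives the canonical embedding $I\rtimes_\mu G\to\mathcal{M}(I\rtimes_\mu G)$; your norm estimate just makes its injectivity explicit. For the converse, you factor $\varphi$ through the $G$-invariant C*-algebra generated by $\varphi(A)$ and $B$ inside $\mathcal{M}(B)$, in which $B$ is an invariant ideal. That algebra is simply $\varphi(A)+B$, on which the action is automatically norm-continuous, so your discreteness caveat is not actually needed.
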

Both the universal $-\rtimes_u G$ and the reduced $-\rtimes_r G$ crossed product
	functors have the ideal property. We obtain the following simple observation.
\begin{prop}
	Let $-\rtimes_\mu G$ be a crossed product functor that is functorial for generalised morphisms,
and let	$A$ be a $G$-C*-algebra. Then the group homomorphism
	\begin{align*}
		\iota_{G,\mu}\colon G \to \mathcal{M}(A\rtimes_\mu G)
	\end{align*}
	extends canonically to a $*$-homomorphism
	\begin{align*}
		\iota_{G,\mu}\colon C_\mu^*(G) \to \mathcal{M}(A\rtimes_\mu G)
	\end{align*}
	given by
	\begin{align*}
		\iota_{G,\mu}(f) g(t) = \sum_{s \in G} f(s)\alpha_s(g(s^{-1}t))
	\end{align*}
	for $f\in C_c(G)$, $g\in A\rtimes_{\mathrm{alg}}G$ and $t\in G$.
\end{prop}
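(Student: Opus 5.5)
The natural route is to apply functoriality for generalised morphisms to the unital inclusion of scalars. Regard $\C$ as a $G$-C*-algebra with the trivial action. Since $\mathcal{M}(A)$ is unital, the map $\varphi\colon \C \to \mathcal{M}(A)$, $\varphi(\lambda)=\lambda 1_{\mathcal{M}(A)}$, is a $*$-homomorphism. It is $G$-equivariant because $\overline{\alpha}_s(1)=1$ for every automorphism $\overline{\alpha}_s$ of $\mathcal{M}(A)$. By assumption $\rtimes_\mu G$ is functorial for generalised morphisms, so we obtain a $*$-homomorphism
\begin{align*}
	\varphi\rtimes_\mu G\colon \C\rtimes_\mu G = C^*_\mu(G)\to \mathcal{M}(A\rtimes_\mu G).
\end{align*}
For $f\in C_c(G)=\C\rtimes_{\mathrm{alg}}G$, $g\in A\rtimes_{\mathrm{alg}}G$ and $t\in G$ it satisfies
\begin{align*}
	((\varphi\rtimes_\mu G)(f)g)(t)=\sum_{s\in G} f(s)1\,\alpha_s(g(s^{-1}t)).
\end{align*}
This is exactly the displayed formula in the statement, so we define $\iota_{G,\mu}:=\varphi\rtimes_\mu G$ on $C^*_\mu(G)$.

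It remains to check that this map extends the group homomorphism $\iota_{G,\mu}=q_A\circ\iota_G$, which is what makes the extension canonical. Take $f=\delta_s$. Its image acts on $A\rtimes_{\mathrm{alg}}G$ by $g\mapsto (t\mapsto \alpha_s(g(s^{-1}t)))$, i.e.\ by left multiplication by the unitary $\delta_s$ in the crossed product. On the other hand, $q_A\circ\iota_G(s)$ is the strictly continuous extension of $q_A$ applied to $\iota_G(s)$. Since $q_A$ restricts to the identity on $A\rtimes_{\mathrm{alg}}G$ and is a surjective $*$-homomorphism, its extension to multipliers satisfies $q_A(\iota_G(s))q_A(g)=q_A(\iota_G(s)g)$. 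Hence $q_A\circ\iota_G(s)$ is also left multiplication by $\delta_s$ on the dense subalgebra $A\rtimes_{\mathrm{alg}}G\subseteq A\rtimes_\mu G$.

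A multiplier is determined by its action on a dense subset, because it is bounded and $A\rtimes_\mu G$ is an essential module over itself. The two maps therefore agree on $G$. By linearity and continuity the formula then holds on all of $C_c(G)$, and the extension to $C^*_\mu(G)$ is unique since $C_c(G)$ is dense there.

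The only delicate point is bookkeeping. We must make sure that the multiplier defined by the formula on $A\rtimes_{\mathrm{alg}}G$ genuinely lives in $\mathcal{M}(A\rtimes_\mu G)$, and that the identification of $q_A$ on multipliers is legitimate. The first is supplied directly by the definition of functoriality for generalised morphisms. The second uses only the standard fact that a surjective $*$-homomorphism extends uniquely and strictly continuously to multiplier algebras.
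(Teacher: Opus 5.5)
Your proposal is correct and is exactly the paper's argument: the paper's entire proof is ``use the obvious embedding $\mathbb{C}\to\mathcal{M}(A)$'', i.e.\ apply functoriality for generalised morphisms to $\lambda\mapsto\lambda 1_{\mathcal{M}(A)}$. Your additional verification that $(\varphi\rtimes_\mu G)(\delta_s)=q_A(\iota_G(s))$ (by comparing their actions on the dense subalgebra $A\rtimes_{\mathrm{alg}}G$) correctly supplies the ``canonical extension'' detail that the paper leaves implicit.
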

\begin{proof}
	Just use the obvious embedding $\mathbb{C}\to \mathcal{M}(A)$.
\end{proof}

The next property, dating back to \cite{KLQNYJM}, is very useful, as it allows us to use `good' functions on $G$ as the source of Schur multiplier maps on the relevant crossed products.

\begin{defn}
	Let $-\rtimes_\mu G$ be a crossed product functor. It is said to
	admit \emph{coactions} if for all $G$-C*-algebras $A$ the covariant representation
	\begin{align*}
		(\iota_{A,\mu}\otimes 1, \iota_{G,\mu}\otimes \iota_{G,u})\colon (A,G) \to
		\mathcal{M}(A\rtimes_\mu G \otimes C^*_u(G))
	\end{align*}
	yields a $*$-homomorphism
	\begin{align*}
	\beta_{A\rtimes_\mu G}:=	(\iota_{A,\mu}\otimes 1) \rtimes_\mu (\iota_{G,\mu}\otimes \iota_{G,u})\colon
		A\rtimes_\mu G \to \mathcal{M}(A\rtimes_\mu G \otimes C^*_u(G)).
	\end{align*}
 Here `yields' means that the $*$-homomorphism
	 $$(\iota_{A,\mu}\otimes 1) \rtimes_u (\iota_{G,\mu}\otimes \iota_{G,u})\colon
	A\rtimes_u G \to \mathcal{M}(A\rtimes_\mu G \otimes C^*_u(G))$$ associated to
	the covariant representation above factors through the quotient map
	$A\rtimes_u G \to A\rtimes_\mu G$.
\end{defn}

	Again, the universal and reduced crossed product functors do admit
    coactions.

\begin{rem}\label{rem:comultiplication}
	Suppose that $-\rtimes_\mu G$ is a crossed product functor that admits coactions. As a special
	case, we obtain a comultiplication like map
	\begin{align*}
		\delta_G = \iota_{G,\mu}\otimes \iota_{G}\colon C_\mu^*(G) \to \mathcal{M}(C^*_\mu (G)
		\otimes C^*_u(G)).
	\end{align*}
	Now, suppose $\phi\colon G \to \C$ is a normalised positive definite function on $G$ and
	let us  denote the corresponding state on $C^*_u(G)$ also by $\phi$. %To avoid some
%	technical issues let us assume that $G$ is discrete. Then $C^*_u(G)$ and $C^*_\mu (G)$
%	are unital C*-algebras and
Standard results on strict slice maps on multiplier algebras (\cite{Lance}) yield by a strict extension a completely positive map	$\mathrm{id}_{A\rtimes_\mu G }\otimes \phi \colon \mathcal{M}	(A\rtimes_\mu G  \otimes C^*_u(G)) \to
	\mathcal{M}(A\rtimes_\mu G )$. Define
	\begin{align*}
		m_\phi = (\mathrm{id}_{A\rtimes_\mu G }\otimes \phi) \circ \beta_{A\rtimes_\mu G } \colon A\rtimes_\mu G
		\to \mathcal{M}(A\rtimes_\mu G )
	\end{align*}
	and note that we have
	\begin{equation}\label{usualSchur}
		m_\phi( f) = \phi f\qquad(f \in 	A\rtimes_{\mathrm{alg}}G),
	\end{equation}
so that in fact the map $m_\phi$ takes values in $A\rtimes_\mu G $ (and naturally is determined by the formula \eqref{usualSchur} above). We shall see later that for even better behaved crossed product functors and certain choices of $\phi$ we can lift the map above to a map $\widetilde{m_\phi}\colon A\rtimes_\mu G
		\to A\rtimes_u G $.
\end{rem}

The next property, similarly to the ideal property, admits several equivalent descriptions.

\begin{defn}\label{defn:correspondence}
	Let $-\rtimes_\mu G$  be a crossed product functor. We say that $-\rtimes_\mu G$
	\begin{itemize}
		\item[(i)] has the \emph{cp-map property}, if for all $G$-C*-algebras $A$ and $B$ and every $G$-equivariant completely positive map
		$\varphi\colon A\to B$  there is a completely positive map $\varphi\rtimes_\mu G \colon
		A\rtimes_\mu G \to B\rtimes_\mu G$ given by
		\begin{align*}
			\varphi\rtimes_\mu G (f) = \varphi\circ f
		\end{align*}
		for all $f\in A\rtimes_{\mathrm{alg}}G$;
		\item[(ii)]  has the \emph{projection property}, if for all $G$-C*-algebras
		$A$ and all $G$-invariant projections $p \in \mathcal{M}(A)$ the inclusion
		$j\colon pAp \to A$ yields an inclusion $j\rtimes_\mu G\colon pAp\rtimes_\mu G \to A\rtimes_\mu G$;
		\item[(iii)]  has the \emph{hereditary subalgebra property}  if for all
		$G$-C*-algebras $A$ and all $G$-invariant hereditary subalgebras $B$ of $A$, the inclusion map
		$\iota\colon B \to A$ yields an injective $*$-homomorphism
		$\iota\rtimes_\mu G\colon B\rtimes_\mu G \to A\rtimes_\mu G$.
	\end{itemize}
\end{defn}

\begin{thm}\cite[Theorem 4.9]{BEW} \label{thm:char_correspondence_func}
	All the properties in Definition \ref{defn:correspondence} are equivalent. A crossed product
	functor that satisfies these  properties  is
	called a \emph{correspondence functor}.
\end{thm}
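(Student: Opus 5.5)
We establish the cycle (i)$\Rightarrow$(iii)$\Rightarrow$(ii)$\Rightarrow$(i), following the strategy of \cite{BEW}.

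\emph{(iii)$\Rightarrow$(ii).} This is immediate. If $p\in\mathcal{M}(A)$ is a $G$-invariant projection, then $pAp$ is a $G$-invariant hereditary subalgebra of $A$, and the inclusion $j$ is exactly the map $\iota$ in (iii).

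\emph{(i)$\Rightarrow$(iii).} Let $B\subseteq A$ be a $G$-invariant hereditary subalgebra. Two facts are available:
\begin{itemize}
\item[(a)] The inclusion is a $*$-homomorphism, so functoriality already gives a $*$-homomorphism $\iota\rtimes_\mu G\colon B\rtimes_\mu G\to A\rtimes_\mu G$.
\item[(b)] We will produce a $G$-equivariant completely positive contraction $\psi\colon A\to B$ with $\psi\circ\iota$ close to $\mathrm{id}_B$.
\end{itemize}
For (b), take an approximate unit $(e_i)$ of $B$ and set $\psi_i(a)=e_iae_i$. Since $B$ is hereditary, $\psi_i$ maps $A$ into $B$. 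However, $\psi_i$ is not equivariant unless $e_i$ is $G$-invariant, and this is the main obstacle. I would circumvent it by passing to $\mathcal{M}$ or to the linking/Morita picture: $B$ is Morita equivalent to the ideal $\overline{ABA}$ via the $G$-equivariant imprimitivity bimodule $\overline{AB}$. Alternatively, one can form the linking algebra $L$, in which $B=pLp$ for the $G$-invariant projection $p=\mathrm{diag}(1,0)\in\mathcal{M}(L)$. The compression $x\mapsto pxp$ is then a $G$-equivariant cp map $L\to B$ that is a left inverse to the inclusion. Applying (i) to this compression and to the inclusion yields maps whose composite is the identity on $B\rtimes_{\mathrm{alg}}G$, so $B\rtimes_\mu G\to L\rtimes_\mu G$ is isometric. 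It remains to relate $L\rtimes_\mu G$ to $A\rtimes_\mu G$. Here I would use that $\overline{ABA}$ is an ideal, together with the ideal property, which follows from (i) via the equivalence with functoriality for generalised morphisms. I would combine this with the standard corner argument, namely that $\overline{ABA}$ is a full corner in the linking algebra.

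\emph{(ii)$\Rightarrow$(i).} Given an equivariant cp map $\varphi\colon A\to B$, first reduce to the case of unital and contractive maps by unitizing, where the $G$-action extends trivially on the unit. Then take the equivariant Stinespring dilation: the Hilbert $B$-module $E=A\otimes_\varphi B$ carries a $G$-action, and $\varphi(a)=\langle \xi,\pi(a)\xi\rangle$ with $\pi\colon A\to\mathcal{L}(E)$. Form the $G$-linking algebra $L=\mathcal{K}(E\oplus B)$, and let $p$ be the $G$-invariant corner projection onto $B$. The homomorphism $\pi\oplus 0\colon A\to\mathcal{M}(L)$ is equivariant, so it induces a map on $\rtimes_\mu$. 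Functoriality for generalised morphisms is available here, because the projection property implies the ideal property, ideals being hereditary corners in the appropriate linking construction. Compressing by $V=\xi$ (an equivariant multiplier) gives $\varphi\rtimes_\mu G(f)=V^*(\pi\rtimes_\mu G)(f)V$. This expression lies in $pLp\rtimes_\mu G\cong B\rtimes_\mu G$ by (ii), so it is completely positive.

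The main technical obstacle throughout is the equivariance of the vectors and projections used. In the Stinespring step, $\xi$ need not be $G$-fixed when $\varphi$ is non-unital, and for this I would replace $\xi$ by the equivariant multiplier $a\mapsto a\otimes 1$ defined via an approximate unit of $A$. In the hereditary step, the issue is the analogous equivariance of the corner projection, which the linking algebra construction above is designed to supply.
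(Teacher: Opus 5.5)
The paper gives no proof of this statement; it quotes \cite[Theorem 4.9]{BEW}, so your proposal has to stand on its own. Several pieces are sound. (iii)$\Rightarrow$(ii) is trivial. Your observation that an equivariant cp left inverse (the compression $x\mapsto pxp$) forces $pLp\rtimes_\mu G\to L\rtimes_\mu G$ to be injective is a correct proof of (i)$\Rightarrow$(ii).

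The genuine gap is the step you flag yourself: ``it remains to relate $L\rtimes_\mu G$ to $A\rtimes_\mu G$''. In the linking algebra of the $\overline{ABA}$--$B$ imprimitivity bimodule, $B$ and $I=\overline{ABA}$ are \emph{complementary} corners. An element $b\in B$ sits in $L$ as $b\,e_{11}$, whereas its image under the inclusion $B\subseteq I$ sits as $b\,e_{22}$. So injectivity of $B\rtimes_\mu G\to L\rtimes_\mu G$ and of $I\rtimes_\mu G\to A\rtimes_\mu G$ tells you nothing about the map $B\rtimes_\mu G\to I\rtimes_\mu G$ induced by the inclusion. Fullness of the corners alone does not bridge this.

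You also give no valid argument that (i) yields the ideal property. Property (i) concerns cp maps into $B$, not into $\mathcal{M}(B)$. Passing from $\mathcal{M}(B)\rtimes_\mu G$ to $\mathcal{M}(B\rtimes_\mu G)$ already requires the ideal property for $B\subseteq\mathcal{M}(B)$, so this is circular. The same unproved ``ideals are corners'' claim is what you rely on in (ii)$\Rightarrow$(i) to get functoriality for $\pi\oplus 0\colon A\to\mathcal{M}(L)$.

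The missing ingredient is a norm comparison between the two corners using off-diagonal elements.
\begin{itemize}
\item Take $L=\mathcal{K}(\overline{BA}\oplus A)=\begin{pmatrix} B&\overline{BA}\\ \overline{AB}&A\end{pmatrix}$ and $p=\mathrm{diag}(1,0)$, so that $pLp=\overline{BAB}=B$.
\item For an approximate unit $(u_\lambda)$ of $B$ and $f\in C_c(G,B)$, one computes in $L\rtimes_{\mathrm{alg}}G$ that $(u_\lambda e_{12})(fe_{22})(u_\lambda e_{21})=\bigl(s\mapsto u_\lambda f(s)\alpha_s(u_\lambda)\bigr)e_{11}$.
\item This tends to $fe_{11}$ in $\ell^1$, because $B$ is $G$-invariant and so $\alpha_s(u_\lambda)$ is again an approximate unit of $B$.
\item Hence $\|f\|_{B\rtimes_\mu G}=\|fe_{11}\|_{L\rtimes_\mu G}\le\|fe_{22}\|_{L\rtimes_\mu G}\le\|f\|_{A\rtimes_\mu G}$. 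The equality uses the projection property; the last inequality uses only functoriality of $A\to L$.
\end{itemize}
This proves (ii)$\Rightarrow$(iii) directly, with no ideal property needed. The same trick applied to $\begin{pmatrix} I&I\\ I&A\end{pmatrix}$ gives the ideal property if you want it.

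In (ii)$\Rightarrow$(i) you can likewise avoid generalised morphisms. Replace $\mathcal{M}(L)$ by the $G$-C*-algebra $C=(\pi\oplus 0)(A)+\mathcal{K}(E\oplus B)$. In $C$, both $p$ and $V$ are $G$-invariant multipliers, $pCp=B$, and $V^*CV\subseteq pCp$. Then $A\to C$ is an honest equivariant $*$-homomorphism. The projection property for $(C,p)$ gives complete positivity and boundedness of $f\mapsto\varphi\circ f$ into $B\rtimes_\mu G$.

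Finally, your unitisation reduction would itself use the ideal property (for $B\subseteq\widetilde B$). It is better to work directly with the non-unital KSGNS multiplier $b\mapsto\lim_\lambda e_\lambda\otimes b$, as you suggest at the end.
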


Both the reduced $-\rtimes_r G $ and the universal $-\rtimes_u G$ crossed product functors are correspondence functors. The first part of the next fact is also contained in \cite[Theorem 4.9]{BEW}, and the second is \cite[Theorem 5.6]{BEW}.

\begin{thm}
	A correspondence functor has the ideal property and admits coactions.
\end{thm}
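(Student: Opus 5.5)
We prove the two assertions separately, relying on Theorem \ref{thm:char_correspondence_func}, which lets us use whichever of the cp-map, projection or hereditary subalgebra properties is most convenient.

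\emph{Ideal property.} A $G$-invariant closed ideal $I$ of $A$ is a $G$-invariant hereditary subalgebra of $A$: if $0\le b\le a$ with $a\in I$, then $b\in I$. So the hereditary subalgebra property from Definition \ref{defn:correspondence}(iii) applies directly. It says the inclusion $I\to A$ induces an injective $*$-homomorphism $I\rtimes_\mu G\to A\rtimes_\mu G$, which is exactly the ideal property. By \cite[Lemma 3.3]{BEW}, $-\rtimes_\mu G$ is then also functorial for generalised morphisms, and we use this below.

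\emph{Coactions.} We must show that the integrated form of $(\iota_{A,\mu}\otimes 1,\iota_{G,\mu}\otimes\iota_{G,u})$ on $A\rtimes_u G$ factors through $q_A$. The standard route is through the coaction on the universal crossed product. Consider the $G$-C*-algebra $A\otimes C^*_u(G)$ with action $\alpha\otimes\mathrm{Ad}\,\lambda^u$, or alternatively the diagonal action on $A\otimes c_0(G)$ and related stabilisations. We aim to realise $\beta_{A\rtimes_\mu G}$ as a composition of maps that $\mu$ is known to respect.
\begin{itemize}
\item[(a)] Use Fell's absorption trick. The covariant pair $(a\mapsto \iota_{A,\mu}(a)\otimes 1,\ s\mapsto \iota_{G,\mu}(s)\otimes u_s)$ is unitarily equivalent, after tensoring with $\ell^2(G)$, to an amplification of $\iota_{A,\mu}\rtimes\iota_{G,\mu}$. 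We would try to encode this equivalence by a $G$-equivariant correspondence, or a $G$-equivariant cp-map or generalised morphism, between $A$ and $A\otimes\mathcal{K}(\ell^2 G)$ carrying a suitably twisted action.
\item[(b)] Apply the cp-map property, or functoriality for generalised morphisms, to that equivariant map. Together with Morita invariance of correspondence functors (which follows from the projection property applied to the full corner $A\cong p(A\otimes\mathcal K)p$), this yields a $*$-homomorphism defined on $A\rtimes_\mu G$ that agrees with $\beta$ on $A\rtimes_{\mathrm{alg}}G$.
\end{itemize}

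The main obstacle is step (a). The natural untwisting isomorphism uses the $C^*_u(G)$ factor, which is not a $\mu$-crossed product, so the equivalence has to be set up on the coefficient side. Concretely, we must exhibit an equivariant isomorphism between $A\otimes\mathcal K(\ell^2G)$ with the action $\alpha\otimes\mathrm{Ad}\rho$ and with the action $\alpha\otimes 1$, and check on generators $a\delta_s$ that the induced map matches $\iota_{A,\mu}(a)\iota_{G,\mu}(s)\otimes u_s$. This is the part of \cite[Theorem 5.6]{BEW} we would follow closely.
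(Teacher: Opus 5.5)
The paper gives no argument of its own here: it cites \cite[Theorem 4.9]{BEW} for the ideal property and \cite[Theorem 5.6]{BEW} for coactions. Your ideal-property argument is correct and is the standard one: a $G$-invariant ideal is a $G$-invariant hereditary subalgebra, so Definition \ref{defn:correspondence}(iii) applies. The coaction part, however, has a genuine gap. It is a plan that defers the only nontrivial step to \cite{BEW}, and both concrete ingredients you name would fail.

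First, Fell's absorption trick absorbs the \emph{regular} representation, not the universal one. Tensoring a covariant pair $(\pi,U)$ with $\lambda$ gives a pair equivalent to the regular representation induced from $\pi$, which factors through $A\rtimes_r G$. It is not an amplification of $\iota_{A,\mu}\rtimes\iota_{G,\mu}$. In any case, information about the pair tensored with $\lambda$ says nothing about whether $(\iota_{A,\mu}\otimes 1,\iota_{G,\mu}\otimes u)$ itself factors through $A\rtimes_\mu G$.

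Second, the equivariant isomorphism you say must be exhibited in (a), between $(A\otimes\mathcal K(\ell^2G),\alpha\otimes\mathrm{Ad}\rho)$ and $(A\otimes\mathcal K(\ell^2G),\alpha\otimes 1)$, does not exist in general. For trivial $\alpha$ it would make the nontrivial action $\mathrm{Ad}\rho$ conjugate to the trivial action. The two actions are only exterior equivalent, via the cocycle $1\otimes\rho_s$, and it is that weaker relation which has to be exploited.

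Two ingredients are missing.
\begin{itemize}
\item[(1)] A correspondence functor is invariant under exterior equivalence. Let $w$ be a $\beta$-cocycle on $B$ and give the linking algebra $M_2(B)$ the action $\mathrm{Ad}\bigl(\mathrm{diag}(w_s,1)\bigr)\circ(\beta_s\otimes\mathrm{id})$. Apply the projection property to its two corners and conjugate by the matrix unit $e_{21}$. This gives $B\rtimes_{\mathrm{Ad}\,w\circ\beta,\mu}G\cong B\rtimes_{\beta,\mu}G$, with $b\delta_s\mapsto bw_s\delta_s$. Here $b\delta_s$ denotes the function supported at $s$ with value $b$.
\item[(2)] If $D$ carries the trivial action, then $\|z\|_{(A\rtimes_\mu G)\otimes D}\le\|z\|_{(A\otimes D)\rtimes_\mu G}$ for $z\in(A\odot D)\rtimes_{\mathrm{alg}}G$. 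To see this, apply the cp-map property to the equivariant completely positive contractions $\mathrm{id}_A\otimes\Psi\colon A\otimes D\to M_n(A)$, where $\Psi\colon D\to M_n$ is completely positive and contractive. Combine this with $M_n(A)\rtimes_\mu G\cong M_n(A\rtimes_\mu G)$, which follows from the projection property and the matrix units. Such slices compute the minimal norm, which gives the inequality.
\end{itemize}
Now take $D=C^*_u(G)$ and $w_s=1\otimes u_s$; this cocycle implements an exterior equivalence between $\alpha\otimes\mathrm{Ad}\,u$ and $\alpha\otimes\mathrm{id}$. One gets
\[
A\rtimes_\mu G\xrightarrow{\;(\cdot\otimes 1)\rtimes_\mu G\;}(A\otimes C^*_u(G))\rtimes_{\alpha\otimes\mathrm{Ad}u,\mu}G\cong(A\otimes C^*_u(G))\rtimes_{\alpha\otimes\mathrm{id},\mu}G\longrightarrow(A\rtimes_\mu G)\otimes C^*_u(G).
\]
The first arrow uses only functoriality, the middle isomorphism is (1), and the last arrow is (2). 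The composite sends $a\delta_s\mapsto a\delta_s\otimes u_s$, so it is exactly $\beta_{A\rtimes_\mu G}$. This also resolves the obstacle you identified: $C^*_u(G)$ only appears as a coefficient algebra, so neither $\mathcal K(\ell^2G)$ nor Fell absorption is needed.
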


The next two results are proved in \cite{AEES}: the first is \cite[Proposition 2.8]{AEES}, the second is \cite[Lemma 2.10]{AEES}.

\begin{prop}\label{prop:triv}
	Let $-\rtimes_\mu G$ be a correspondence crossed product functor and let $A$ be a C*-algebra equipped with a trivial $G$-action. Then the images of $*$-homomorphisms
	$\iota_{A,\mu}\colon A\to \mathcal{M}(A\rtimes_\mu G)$ and
	$\iota_{G,\mu}\colon C_\mu^*(G) \to \mathcal{M}(A\rtimes_\mu G)$ commute,
	$\iota_{A,\mu}(a)\iota_{G,\mu}(b) \in A\rtimes_\mu G$ for all
	$a\in A$ and $b\in C^*_\mu(G)$ and the $*$-homomorphism
	\begin{align*}
		\iota_{A,\mu}\odot \iota_{G,\mu}\colon A\odot C_\mu^*(G) \to A\rtimes_\mu G
	\end{align*}
	given by
	\begin{align*}
		\iota_{A,\mu}\odot \iota_{G,\mu}(a\otimes b) =\iota_{A,\mu}(a)\iota_{G,\mu}(b)
	\end{align*}
	for $a\in A$ and $b\in C_\mu^*(G)$ is injective and has dense image.
	
	In particular, $A\rtimes_\mu G$ is a C*-tensor product of $A$ and $C^*_\mu(G)$.
	We will write $A\otimes_\mu C^*_\mu(G)$ for $A\rtimes_\mu G$.
\end{prop}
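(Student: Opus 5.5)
We must prove Proposition \ref{prop:triv}. Let $A$ carry the trivial action and fix the correspondence functor $-\rtimes_\mu G$.

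\emph{Commutation and the multiplier claim.} Since the action is trivial, the covariance relation $\iota_G(s)\iota_A(a)\iota_G(s)^* = \iota_A(\alpha_s(a)) = \iota_A(a)$ already holds in $\mathcal{M}(A\rtimes_u G)$. Applying the strictly continuous extension of $q_A$ gives the same relation for $\iota_{A,\mu}$ and $\iota_{G,\mu}$. Hence $\iota_{A,\mu}(a)$ commutes with each $\iota_{G,\mu}(s)$. By linearity and continuity it commutes with the image of $C^*_\mu(G)$, using the extension $\iota_{G,\mu}\colon C^*_\mu(G)\to\mathcal{M}(A\rtimes_\mu G)$ from the earlier proposition (correspondence functors have the ideal property, hence are functorial for generalised morphisms). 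For $b=\sum_s f(s)\delta_s$ with $f\in C_c(G)$, the product $\iota_{A,\mu}(a)\iota_{G,\mu}(b)$ is the function $s\mapsto f(s)a$ in $A\rtimes_{\mathrm{alg}}G$. It therefore lies in $A\rtimes_\mu G$. Density of $C_c(G)$ in $C^*_\mu(G)$ together with the bound $\|\iota_{A,\mu}(a)\iota_{G,\mu}(b)\|\le\|a\|\|b\|$ gives the claim for all $b$. The same computation shows that the images of elementary tensors span $A\rtimes_{\mathrm{alg}}G$, so $\iota_{A,\mu}\odot\iota_{G,\mu}$ has dense image. Commutation makes it a $*$-homomorphism on the algebraic tensor product $A\odot C^*_\mu(G)$.

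\emph{Injectivity.} This is the main step. A homomorphism on $A\odot B$ built from two commuting $*$-homomorphisms is injective as soon as each factor is injective, by the standard linear-algebra argument.

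Take $\sum_i a_i\otimes b_i\mapsto 0$ with the $b_i$ linearly independent. Choose a state, or more generally a functional, to cut down in $A$. Concretely, apply the cp-map property to a state $\varphi$ on $A$, viewed as an equivariant cp map $A\to\C$ (with $\C$ carrying the trivial action). This yields a cp map $\varphi\rtimes_\mu G\colon A\rtimes_\mu G\to C^*_\mu(G)$ sending $\iota_{A,\mu}(a)\iota_{G,\mu}(b)$ to $\varphi(a)b$. To check this, verify it on $C_c(G)$ and extend by continuity.

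Applying it gives $\sum_i\varphi(a_i)b_i=0$, so $\varphi(a_i)=0$ for every state $\varphi$. Hence $a_i=0$, and the map is injective.

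There is a subtlety: $A$ may be non-unital, so $\iota_{A,\mu}(a)$ lies only in the multiplier algebra. The formula must therefore be checked on the dense subalgebra $A\rtimes_{\mathrm{alg}}G$, where it is immediate from $(\varphi\rtimes_\mu G)(f)=\varphi\circ f$.

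\emph{Conclusion.} The norm of $A\rtimes_\mu G$ restricted to $A\odot C^*_\mu(G)$ is thus a C*-norm. Its completion is $A\rtimes_\mu G$ by density, which is the final assertion.
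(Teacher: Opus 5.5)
Your proof is correct, but one sentence in the injectivity step is false and should be deleted. You assert that a $*$-homomorphism on $A\odot B$ built from two commuting injective $*$-homomorphisms is automatically injective ``by the standard linear-algebra argument''. That fails already for $A=B=\C^2$ with both maps the identity into $\C^2$ (which commute, since $\C^2$ is commutative): the induced map $x\otimes y\mapsto xy$ kills $e_1\otimes e_2$. The linear-algebra argument only gives injectivity of $\pi_A\otimes\pi_B\colon A\odot B\to C\odot D$, not of the multiplication map into a single algebra.

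You do not actually use this claim. The slice argument that follows is what proves injectivity, and it is sound:
\begin{itemize}
\item Every state $\varphi$ of $A$ is an equivariant cp map $A\to\C$ for the trivial actions.
\item The cp-map property gives $\varphi\rtimes_\mu G\colon A\rtimes_\mu G\to C^*_\mu(G)$ with $\iota_{A,\mu}(a)\iota_{G,\mu}(b)\mapsto\varphi(a)b$. This is checked on $C_c(G)$ and extended by boundedness.
\item Linear independence of the $b_i$, together with the fact that states separate the points of $A$, then forces all $a_i=0$.
\end{itemize}

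The paper does not prove this proposition itself; it only quotes \cite[Proposition 2.8]{AEES}. Your argument therefore supplies a self-contained proof. It uses only two ingredients: functoriality for generalised morphisms, so that $\iota_{G,\mu}$ is defined on $C^*_\mu(G)$, and the cp-map property from Definition \ref{defn:correspondence}. The mechanism, slicing $A\rtimes_\mu G$ down to $C^*_\mu(G)$ by a state of $A$, is the same one the paper uses in Proposition \ref{invariantstate}. For trivial actions every state is invariant, and applying all of them at once is what separates the $A$-legs.
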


\begin{lem}\label{lem:strong_fell_absorption}
		Let $-\rtimes_\mu G$ be a correspondence crossed product functor and let $A$ be a $G$-C*-algebra.
		The $*$-homomorphisms
		 $\iota_A \otimes_\mu 1\colon A \to \mathcal{M}((A\rtimes_{u} G) \otimes_\mu C_\mu^* (G))$
		induced by
		\begin{align*}
			(\iota_A \otimes_\mu 1)(a) (b\otimes c) = \iota_A(a)b \otimes c
		\end{align*}
		for $a\in A$, $b\in A\rtimes_{u} G$ and $c\in C^*_\mu (G)$,
		and
		$\iota_G \otimes_\mu \iota_{G,\mu}\colon G \to \mathcal{M}((A\rtimes_{u} G) \otimes_\mu C_\mu^* (G))$
		induced by
		\begin{align*}
			(\iota_G \otimes_\mu \iota_{G,\mu})(s) (b\otimes c) = \iota_G(s)b \otimes \iota_{G,\mu}(s) c
		\end{align*}
		for all $s\in G$, $b\in A\rtimes_{u} G$ and $c\in C_\mu^* (G)$
		define a nondegenerate covariant representation of $A$. Its integrated form
		\[(\iota_A \otimes_\mu 1) \rtimes_{u} (\iota_G \otimes_\mu \iota_{G,\mu}) \colon A\rtimes_{u} G \to \mathcal{M}((A\rtimes_{u} G) \otimes_\mu C_\mu^* (G) )\]
		factors through
		$q_A \colon A\rtimes_{u} G \to A\rtimes_\mu G$, i.e., there is a unique injective $*$-homomorphism
		\[\gamma_{A\rtimes_\mu G}:=(\iota_A \otimes_\mu 1) \rtimes_\mu (\iota_G \otimes_\mu \iota_{G,\mu})\colon A\rtimes_\mu G
		\to \mathcal{M}((A\rtimes_{u} G) \otimes_\mu C_\mu^* (G))\]
		such that $(\iota_A \otimes_\mu 1) \rtimes_{u} (\iota_G \otimes_\mu \iota_{G,\mu})
		= ((\iota_A \otimes_\mu 1) \rtimes_\mu (\iota_G \otimes_\mu \iota_{G,\mu})) \circ q_A$.
	\end{lem}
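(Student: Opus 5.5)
We will reduce the statement to the correspondence functor properties listed in Definition \ref{defn:correspondence} and to the coaction and tensor product facts recorded above (Proposition \ref{prop:triv}). The covariance of the pair $(\iota_A\otimes_\mu 1,\iota_G\otimes_\mu\iota_{G,\mu})$ is a direct computation: $\iota_G(s)\iota_A(a)\iota_G(s)^*=\iota_A(\alpha_s(a))$ in the first leg, while the second leg contributes the unitary $\iota_{G,\mu}(s)$ and its adjoint, which cancel against the trivial second leg $1$. Nondegeneracy follows because $\iota_A\rtimes_u\iota_G$ is nondegenerate on $A\rtimes_u G$ and $\iota_{G,\mu}(C^*_\mu(G))$ acts nondegenerately on $C^*_\mu(G)$. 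Here $(A\rtimes_u G)\otimes_\mu C^*_\mu(G)$ is understood via Proposition \ref{prop:triv} as $(A\rtimes_uG)\rtimes_\mu G$, where $A\rtimes_uG$ carries the trivial action.

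To obtain the factorisation, we regard the representation as a composition of morphisms to which functoriality applies. Consider the $G$-equivariant $*$-homomorphism
\[
\varphi=\iota_A\colon A\to\mathcal M(A\rtimes_u G)=\mathcal M(B),
\]
where we equip $B=A\rtimes_uG$ with the action $\mathrm{Ad}\,\iota_G$. This action is inner, implemented by the unitaries $\iota_G(s)\in\mathcal M(B)$. Since correspondence functors have the ideal property, they are functorial for generalised morphisms, so $\varphi$ induces a map $\varphi\rtimes_\mu G\colon A\rtimes_\mu G\to\mathcal M(B\rtimes_\mu G)$. We then use the standard untwisting for inner actions. The map $b\,\delta_s\mapsto b\,\iota_G(s)\otimes\delta_s$ should give an isomorphism between $(B,\mathrm{Ad}\,\iota_G)\rtimes_\mu G$ and $(B,\mathrm{triv})\rtimes_\mu G=B\otimes_\mu C^*_\mu(G)$.

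\textbf{Main obstacle.} The key step is to prove that this untwisting is isometric for the $\mu$-norms. On the algebraic level it is an isomorphism, equivariant for the exterior equivalence between the two actions. We plan to argue that correspondence functors respect exterior equivalence by a $2\times2$ matrix trick. Take $M_2(B)$ with the action $\mathrm{Ad}\operatorname{diag}(\iota_G(s),1)$. The corners $e_{11}M_2(B)e_{11}$ and $e_{22}M_2(B)e_{22}$ carry the two actions and are given by $G$-invariant projections in the multiplier algebra. By the projection property, both corner crossed products embed isometrically into $M_2(B)\rtimes_\mu G$. The off-diagonal unitary $\begin{pmatrix}0&u\\1&0\end{pmatrix}$, or rather the partial isometry conjugating one corner to the other, is implemented inside $\mathcal M(M_2(B)\rtimes_\mu G)$ by functoriality for generalised morphisms. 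It carries the first corner isometrically onto the second, and this yields the isometry. Composing gives a $*$-homomorphism $A\rtimes_\mu G\to\mathcal M(B\otimes_\mu C^*_\mu(G))$ which agrees with the integrated form on $A\rtimes_{\mathrm{alg}}G$; hence the required factorisation through $q_A$ holds, and uniqueness follows from the density of $A\rtimes_{\mathrm{alg}}G$.

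For injectivity, we compose with the map $\mathrm{id}\otimes\epsilon$-like slicing. Alternatively, we note that the representation restricted via $B=A\rtimes_uG\to A\rtimes_\mu G$ contains a copy of $\iota_{A,\mu}\rtimes\iota_{G,\mu}$ tensored with the regular representation. More concretely, we compose with $q_A\otimes_\mu\mathrm{id}$ and then with the coaction of $A\rtimes_\mu G$ (which exists, since correspondence functors admit coactions). Slicing by a vector state of the trivial representation is not available in general, so we instead use Fell absorption. In this way $\gamma_{A\rtimes_\mu G}$ dominates the $\mu$-norm, which gives injectivity.
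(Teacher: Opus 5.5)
The paper gives no proof of this lemma; it imports it from \cite{AEES}, so I am judging your argument on its own. The factorisation part is sound. $\iota_A$ is an equivariant map into $B=A\rtimes_uG$ equipped with the inner action $\mathrm{Ad}\,\iota_G$. Your linking-algebra argument works: take $M_2(B)$ with action $\mathrm{Ad}\,\mathrm{diag}(\iota_G(s),1)$, use the invariant corners $e_{11},e_{22}$ and the projection property, and conjugate by the image of $e_{21}$ in $\mathcal M(M_2(B)\rtimes_\mu G)$. This gives the untwisting isomorphism $b\delta_s\mapsto b\,\iota_G(s)\delta_s$ from $(B,\mathrm{Ad}\,\iota_G)\rtimes_\mu G$ onto $(B,\mathrm{triv})\rtimes_\mu G=B\otimes_\mu C^*_\mu(G)$. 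Composing yields the factorisation through $q_A$, and uniqueness is clear.

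\textbf{The gap is injectivity.} Your last paragraph is not an argument. You rightly note that the trivial character need not extend to $C^*_\mu(G)$. However, ``using Fell absorption'' for $\rtimes_\mu$ is essentially the statement being proved. Composing with $q_A\otimes_\mu\mathrm{id}$ and $\beta_{A\rtimes_\mu G}$ produces no map back to $A\rtimes_\mu G$ from which $\|\gamma_{A\rtimes_\mu G}(x)\|\ge\|x\|$ would follow.

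Injectivity of $\iota_A$ alone also does not suffice, because crossed product functors do not preserve injectivity in general. For example, $\C\to\ell^\infty(G)=\mathcal M(c_0(G))$ induces $\lambda$ on $C^*_\mu(G)$, which is not injective unless $C^*_\mu(G)=C^*_r(G)$.

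\textbf{What is missing is an equivariant left inverse.}
\begin{itemize}
\item Since $G$ is discrete, $\iota_A(A)\subseteq B$ itself.
\item The canonical conditional expectation $E\colon A\rtimes_uG\to A$, $E(\sum_s a_s\delta_s)=a_e$, is completely positive and satisfies $E\circ\iota_A=\mathrm{id}_A$.
\item $E$ is equivariant from $(B,\mathrm{Ad}\,\iota_G)$ to $(A,\alpha)$, since $E(\iota_G(t)\,a\delta_s\,\iota_G(t)^*)=E(\alpha_t(a)\delta_{tst^{-1}})=\alpha_t(E(a\delta_s))$.
\item The cp-map property then gives a bounded map $E\rtimes_\mu G\colon B\rtimes_\mu G\to A\rtimes_\mu G$ with $(E\rtimes_\mu G)\circ(\iota_A\rtimes_\mu G)=\mathrm{id}$ on $A\rtimes_{\mathrm{alg}}G$, hence on $A\rtimes_\mu G$.
\end{itemize}
So $\iota_A\rtimes_\mu G$ is injective, and composing with your untwisting isomorphism shows that $\gamma_{A\rtimes_\mu G}$ is injective. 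This is the same left-inverse trick used in Proposition \ref{invariantstate}. With this replacing your final paragraph, the proof goes through.
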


The above lemma allows us to construct the `$\mu$-to-full' Schur multiplier maps mentioned in Remark \ref{rem:comultiplication}.

\begin{prop} \label{Schurrevisited}
   Let $-\rtimes_\mu G$ be a correspondence crossed product functor and let  $\phi \in C^*_\mu(G)^*\subseteq B(G)$. Then the prescription
\begin{equation}\label{usualSchur2}
\widetilde{m_\phi}( f) = \phi f\qquad(f \in 	A\rtimes_{\mathrm{alg}}G),
\end{equation}
extends to a completely bounded map (denoted by the same symbol)
\[
\widetilde{m_\phi}  \colon A\rtimes_\mu G
\to A\rtimes_u G,
\]
with $\|\widetilde{m_\phi}\|_{cb} \leq \|\phi\|_{C^*_\mu(G)^*}$.
\end{prop}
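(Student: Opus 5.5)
The plan is to use the injective $*$-homomorphism $\gamma_{A\rtimes_\mu G}$ of Lemma \ref{lem:strong_fell_absorption} and then slice off the second leg with $\phi$, in the same way $m_\phi$ was built in Remark \ref{rem:comultiplication}.

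First, $\phi$ is a bounded functional on $C^*_\mu(G)$. Write it as a linear combination of at most four positive functionals, or better, as $\phi(x)=\la \xi, \pi(x)\eta\ra$ for some representation $\pi$ of $C^*_\mu(G)$ with $\|\xi\|\|\eta\|=\|\phi\|$. Then the slice map
\[
\mathrm{id}\otimes\phi\colon (A\rtimes_u G)\otimes_\mu C^*_\mu(G)\to A\rtimes_u G
\]
should be completely bounded with $\|\mathrm{id}\otimes\phi\|_{cb}\le\|\phi\|$. The point to verify is that the slice is well defined on the possibly exotic tensor product $\otimes_\mu$. Since $\otimes_\mu$ is a C*-tensor norm, it dominates the minimal one, so we have a quotient map $(A\rtimes_u G)\otimes_\mu C^*_\mu(G)\to (A\rtimes_u G)\otimes C^*_\mu(G)$. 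On the minimal tensor product, slice maps by bounded functionals are standard and completely bounded with cb-norm $\|\phi\|$; compose with this quotient. Next, extend strictly to multiplier algebras, as in Remark \ref{rem:comultiplication} using \cite{Lance}. This gives a cb map
\[
\mathrm{id}\otimes\phi\colon\mathcal{M}((A\rtimes_u G)\otimes_\mu C^*_\mu(G))\to\mathcal{M}(A\rtimes_u G)
\]
of the same cb-norm. The strict extension is cleanest for positive $\phi$, so I would first treat states and positive functionals, then use the vector-functional form $\phi(x)=\la\xi,\pi(x)\eta\ra$, with $\pi$ extended to the multiplier algebra, to keep the norm bound $\|\phi\|$ instead of a constant multiple.

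Now define $\widetilde{m_\phi}=(\mathrm{id}\otimes\phi)\circ\gamma_{A\rtimes_\mu G}$. As a composition of a $*$-homomorphism with a cb map, it is cb with $\|\widetilde{m_\phi}\|_{cb}\le\|\phi\|_{C^*_\mu(G)^*}$. To check the formula, take $f=a\delta_s$ with $a\in A$ and $s\in G$. Then
\[
\gamma(f)=(\iota_A(a)\otimes 1)(\iota_G(s)\otimes\iota_{G,\mu}(s))=\iota_A(a)\iota_G(s)\otimes\iota_{G,\mu}(s).
\]
Slicing gives $\phi(s)\,\iota_A(a)\iota_G(s)$, which is $\phi f$ viewed in $A\rtimes_u G$. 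By linearity, \eqref{usualSchur2} holds on $A\rtimes_{\mathrm{alg}}G$. Since the algebraic crossed product is dense in $A\rtimes_\mu G$ and $\widetilde{m_\phi}$ is continuous, it takes values in the closed subspace $A\rtimes_u G$ rather than only in its multiplier algebra, so it is the desired extension.

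The main obstacle is the slice-map step: making sure $\mathrm{id}\otimes\phi$ makes sense on $\mathcal{M}$ of the $\mu$-tensor product, with cb-norm exactly $\|\phi\|$, rather than being defined only on the minimal tensor product. The route through the canonical quotient onto the minimal tensor product, followed by the strict extension, handles this. A more careful alternative is to represent the $\mu$-tensor product faithfully and nondegenerately on $H\otimes K$ and slice with vector functionals.
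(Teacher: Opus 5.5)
Your proposal is correct and follows essentially the same route as the paper: the paper also defines $\widetilde{m_\phi}=(\mathrm{id}\otimes\phi)\circ q\circ\gamma_{A\rtimes_\mu G}$. Here $q$ is the canonical quotient from the $\mu$-tensor product onto the minimal one (via Takesaki's theorem), and $\mathrm{id}\otimes\phi$ is the strict extension of the slice map to multiplier algebras. The paper then verifies the formula on $A\rtimes_{\mathrm{alg}}G$ to conclude that the map lands in $A\rtimes_u G$.
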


\begin{proof}
Strict extension yields a completely bounded strict map	$\mathrm{id}_{A\rtimes_u G }\otimes \phi \colon \mathcal{M}	(A\rtimes_u G  \otimes C^*_\mu(G)) \to
\mathcal{M}(A\rtimes_u G )$. Use Lemma \ref{lem:strong_fell_absorption} to define a bounded map
\begin{align*}
\widetilde{m_\phi} = (\mathrm{id}_{A\rtimes_u G }\otimes \phi) \circ q \circ  \gamma_{A\rtimes_\mu G } \colon A\rtimes_\mu G
\to \mathcal{M}(A\rtimes_u G ),
\end{align*}
where $q\colon(A\rtimes_u G)  \otimes_\mu C^*_\mu(G)\to (A\rtimes_u G)  \otimes C^*_\mu(G)$ is the obvious quotient map; its existence follows from Takesaki's theorem saying that the spatial norm is the minimal $C^*$-norm on the algebraic tensor product. It is easy to check that   $ \widetilde{m_\phi}$ satisfies the condition \eqref{usualSchur2} and hence actually takes values in $A\rtimes_u G$.
\end{proof}

Finally we mention an easy observation which tells us that when the action of $G$ on $A$ admits an invariant state, and $\rtimes_\mu$ is a $G$-correspondence functor, then the crossed product $A \rtimes_\mu G$ `remembers' $C^*_\mu(G)$.

%{\color{red} unitality is likely not needed, will think about it. \color{blue} Maybe we don't need it. The unital case, I guess, is good enogh for us.}

\begin{prop}\label{invariantstate}
	%Let $G$ be a locally compact group
	Let $-\rtimes_\mu G$ be a correspondence crossed product functor, and let $G$ act on a unital C*-algebra $A$ with a $G$-invariant state.	Then the canonical map $\iota_{G,\mu}\colon C^*_\mu(G) \to A \rtimes_\mu G$ is isometric.
\end{prop}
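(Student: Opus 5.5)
Since $A$ is unital, the unital map $\C\to A$, $\lambda\mapsto \lambda 1_A$, is a $G$-equivariant $*$-homomorphism, with $\C$ carrying the trivial action. By functoriality of $-\rtimes_\mu G$ it induces a $*$-homomorphism $j\colon C^*_\mu(G)=\C\rtimes_\mu G\to A\rtimes_\mu G$. On $C_c(G)$ this is $f\mapsto (s\mapsto f(s)1_A)$, so $j$ agrees with $\iota_{G,\mu}$, which lands in $A\rtimes_\mu G$ itself because $A$ is unital. In particular $j$ is contractive, i.e.\ $\|\iota_{G,\mu}(x)\|\le \|x\|_{C^*_\mu(G)}$.

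For the converse inequality I will use the cp-map property from Definition \ref{defn:correspondence}(i), available by Theorem \ref{thm:char_correspondence_func}. Let $\omega$ be the $G$-invariant state on $A$. Invariance, $\omega\circ\alpha_s=\omega$ for all $s\in G$, says exactly that $\omega\colon A\to\C$ is a $G$-equivariant completely positive map (positive functionals into $\C$ are automatically completely positive). The cp-map property then gives a completely positive map
\[
\omega\rtimes_\mu G\colon A\rtimes_\mu G\to \C\rtimes_\mu G = C^*_\mu(G),\qquad f\mapsto \omega\circ f .
\]
Both algebras are unital, $1_{A\rtimes_\mu G}=\delta_e 1_A$ is sent to $\delta_e$, and $\omega(1_A)=1$. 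So $\omega\rtimes_\mu G$ is unital completely positive, hence contractive.

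The composition $(\omega\rtimes_\mu G)\circ\iota_{G,\mu}$ sends $f\in C_c(G)$ to $s\mapsto \omega(f(s)1_A)=f(s)$. It is therefore the identity on the dense subalgebra $C_c(G)$, and by continuity on all of $C^*_\mu(G)$. This yields
\[
\|x\|=\|(\omega\rtimes_\mu G)(\iota_{G,\mu}(x))\|\le \|\iota_{G,\mu}(x)\|\le \|x\|,
\]
so $\iota_{G,\mu}$ is isometric.

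The only points needing care are that the maps really exist at the level of the $\mu$-completions, rather than just on the algebraic crossed products. This is exactly what functoriality and the cp-map property guarantee. The second point is the unitality of $\omega\rtimes_\mu G$, which gives contractivity. Alternatively, I can avoid unitality and bound the norm of a completely positive map by $\|\omega\rtimes_\mu G\|=\lim \|(\omega\rtimes_\mu G)(e_i)\|$ for an approximate unit $(e_i)$ of $A\rtimes_\mu G$. I expect no genuine obstacle beyond this bookkeeping.
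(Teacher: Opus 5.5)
Your proposal is correct and is essentially the paper's argument. The paper likewise applies the cp-map property to the invariant state to obtain a contractive completely positive map $A\rtimes_\mu G\to C^*_\mu(G)$. It then observes that $\iota_{G,\mu}$, which is induced by the equivariant embedding $\C\to A$, is a right inverse of that map. You only make explicit the contractivity of both maps (via functoriality and unitality), which the paper leaves implicit.
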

\begin{proof}
A  $G$-invariant state is, in particular, a $G$-invariant completely positive map from $A$ to $\mathbb{C}$ (which admits a $G$-invariant right inverse given by $\lambda \mapsto \lambda 1_A$) so that we obtain a completely positive, contractive map $\phi_\mu\colon A \rtimes_\mu G \to 	C^*_\mu(G)$, such that $\iota_{G,\mu}$ is its right inverse. Hence $\iota_{G,\mu}$ is isometric.
\end{proof}

For examples of crossed product functors, notably the Brown-Guentner and Kaliszewski-Landstad-Quigg functors and their properties, we refer to \cite{BEW}. Note already here that the Brown-Guentner crossed product functor, whilst closely related to the functors we consider in this paper, is not a correspondence functor (which will turn out to be the case for our functors), as discussed in \cite[Remark 4.19]{BEW}.

\section{Banach *-algebras $\textup{PF}_p^*(G;A)$ and their properties} \label{sec:Banach}

As in \cite{SW2}, the  definition of the crossed product functor in our work will use at a certain point a C*-envelope construction. In this section, we introduce and discuss the relevant Banach $*$-algebras.

\subsection{Definition of the $\ell^p$-convolution crossed product Banach algebra $\textup{PF}_p^*(G;A)$}

Let $(A,\alpha)$ be a $G$-C*-algebra. A \emph{(Banach space) covariant representation} is a triple
$(X,\pi, U)$ consisting of a Banach space $X$, a contractive algebra-homomorphism
$\pi\colon A \to \mathcal{B}(X)$ and a group homomorphism
$U\colon G \to \mbox{Isom}(X)$ into the isometry group $\mbox{Isom}(X)$ of $X$, such that
\begin{align*}
	U_s \pi(a) = \pi(\alpha_s(a)) U_s
\end{align*}
holds for all $s\in G$ and $a\in A$. We say $(X,\pi, U)$ is a \emph{C*-covariant representation}
if $(X,\pi, U)$ is a covariant representation, $X$ an Hilbert space, $\pi$ is a $*$-homomorphism
and $U(G) \subseteq \mathcal{U}(X)$, where $\mathcal{U}(X)$ denotes the group
of unitary operators on $X$.

\begin{rem}
	i) Let $\pi\colon A \to \mathcal{B}(\mathcal{H})$ be a $*$-representation of the $G$-C*-algebra $A$
	on a Hilbert space $\mathcal{H}$.
	Let $\ell^p(G;\mathcal{H})$ be the  $\mathcal{H}$-valued $\ell^p$-space for a
	$p\in [1,\infty)$.
	For every $a\in A$ and $f\in \ell^p(G;\mathcal{H})$ we define
	\begin{align*}
		\widetilde{\pi}_p(a)f\colon G \to \mathcal{H},\quad s\mapsto \pi(\alpha_{s^{-1}}(a))(f(s)).
	\end{align*}
	The inequality $\| (\widetilde{\pi}_p(a)f)(s) \| \leq \| a\| \|f(s)\|$ for all $s\in G$ implies
	\begin{align}\label{cp:eq:regrep1}
		\|\widetilde{\pi}_p(a)f \|_p \leq \|a\| \|f\|_p,
	\end{align}
	so $\widetilde{\pi}_p(a)f\in \ell^p(G;\mathcal{H})$. The inequality \eqref{cp:eq:regrep1}
	yields further  that
	$\widetilde{\pi}_p(a)\colon \ell^p(G; \mathcal{H})\to \ell^p(G;\mathcal{H})$ is
	a well defined linear operator with norm bounded by $\|a\|$. We thus obtain a contractive
	algebra-homomorphism $\widetilde{\pi}_p\colon A \to \mathcal{B}(\ell^p(G;\mathcal{H}))$.
	Now let $p>1$ and let $q \in (1,\infty)$ be the `conjugate index', i.e.~$1/p+1/q=1$. For $f \in \ell^p(G;\mathcal{H})$, $g\in \ell^q(G;\mathcal{H})$, write $\la f,g\ra = \sum_{t\in G} \la f(t), g(t)\ra $ for the canonical \emph{sesquilinear} pairing. The operator $\widetilde{\pi}_p$ is still `exterior involutive', i.e.
	\begin{align}\label{cp:eq:exinv}
		\langle \widetilde{\pi}_p(a)f,g \rangle = \sum_{s\in G} \langle \pi(\alpha_{s^{-1}}(a))f(s),g(s) \rangle
		 = \sum_{s\in G} \langle f(s), \pi(\alpha_{s^{-1}}(a^*))g(s) \rangle
		= \langle f, \widetilde{\pi}_q(a^*)g \rangle.
	\end{align}
	Let
	\begin{align*}
		\lambda^p \colon G \to \mbox{Isom}(\ell^p(G;\mathcal{H}))
	\end{align*}
	be the group homomorphism given by $(\lambda^p(s)f)(t) = f(s^{-1}t)$ for $f\in \ell^p(G;\mathcal{H})$ and $s,t\in G$.
	Note that for $f\in \ell^p(G;\mathcal{H})$, $g\in \ell^q(G; \mathcal{H})$ and $s \in G$ we have
	\begin{align}\label{cp:eq:regrepinv}
		\langle \lambda^p(s)f,g\rangle = \sum_{t\in G} \langle f(s^{-1}t),g(t) \rangle
		= \sum_{t\in G} \langle f(t) ,g(st)\rangle  = \langle f, \lambda^q(s^{-1}) g \rangle.
	\end{align}
It is straightforward to check that $(\ell^p(G;\mathcal{H}),\widetilde{\pi}_p,
	\lambda^p)$ is a covariant representation. We will call this construction the
	\emph{$\pi$-valued $p$-regular covariant representation}. We will occasionally omit the index $p$ from $\lambda^p$ or $\widetilde{\pi}_p$.
	
	ii) Let $\varphi\colon A \to B$ be a $G$-equivariant $*$-homomorphism between
	$G$-C*-algebras, let $\pi\colon B \to \mathcal{B}(\mathcal{H})$ be a $*$-representation of $B$ on
	an Hilbert space $\mathcal{H}$ and $p\in [1,\infty)$. Furthermore,
	let the triple $(\ell^p(G;\mathcal{H}), \widetilde{\pi}_p, \lambda^p)$ be the 
	$\pi$-valued $p$-regular covariant representation. Then
	$(\ell^p(G;\mathcal{H}),\widetilde{\pi}_p\circ \varphi, \lambda^p) =
	(\ell^p(G;\mathcal{H}),\widetilde{(\pi\circ \varphi)_p}, \lambda^p)$.
\end{rem}

Let $(A,\alpha)$ be a $G$-C*-algebra and let $(X,\pi, U)$ be a covariant representation of $(A,\alpha)$. For $f\in \ell^1(G;A)$, using the inequality
$\|\pi(f(s)) U_s x \| \leq \|f(s)\| \|x\|$ valid for all $s\in G,x\in X$, we see that the function
$G \ni s \mapsto \pi(f(s))U_s x \in X$ is integrable and
\begin{align*}
	\pi \rtimes U(f) \colon X \to X,\quad x \mapsto \sum_{s\in G} \pi(f(s))U_s x
\end{align*}
is a linear operator, with the norm bounded by $\|f\|_1$. This yields a contractive Banach algebra homomorphism
\begin{align*}
	\pi\rtimes U \colon \ell^1(G;A) \to \mathcal{B}(X),\quad  f \mapsto \pi\rtimes U(f).
\end{align*}

\begin{prop}\label{prop:pointwiseext}
	Let $A$ and $B$ be two $G$-C*-algebras and let $\varphi\colon A \to B$ be a
	$G$-equivariant $*$-homo\-mor\-phis\-m. Then the `pointwise extension' of $\varphi$, denoted  $\varphi\rtimes_1 G$, yields  a contractive $*$-homo\-mor\-phism   from $\ell^1(G;A)$ to $\ell^1(G; B)$.
	If $\pi \colon B\to \mathcal{B}(\mathcal{H})$ is a $*$-representation, $p\in [1,\infty)$ and
	$(\ell^p(G;\mathcal{H}),\tilde{\pi}_p,\lambda^p)$ is the $\pi$-valued
	$p$-regular covariant representation, then
	$$\tilde{\pi}\rtimes \lambda^p((\varphi\rtimes_1 G) f) = \widetilde{\pi\circ \varphi}\rtimes \lambda^p
	(f)$$ for all $f\in \ell^1(G;\mathcal{H})$.
\end{prop}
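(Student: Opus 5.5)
We define $\varphi\rtimes_1 G$ pointwise, $(\varphi\rtimes_1 G)(f)(s) = \varphi(f(s))$ for $f\in \ell^1(G;A)$ and $s\in G$, and check three things in turn: that it is well defined and contractive, that it is a $*$-homomorphism, and that it intertwines the integrated representations. Contractivity is immediate. Since $\varphi$ is a $*$-homomorphism between C*-algebras, it is contractive, so
\[\|(\varphi\rtimes_1 G)(f)\|_1=\sum_{s\in G}\|\varphi(f(s))\|\le \sum_{s\in G}\|f(s)\| = \|f\|_1.\]
Hence the map lands in $\ell^1(G;B)$ and is a contractive linear map.

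For multiplicativity we use the twisted convolution $(f*g)(t)=\sum_{s\in G} f(s)\alpha_s(g(s^{-1}t))$, whose sum converges absolutely in $A$. Applying the bounded linear map $\varphi$ commutes with this absolutely convergent sum. Equivariance gives $\varphi(\alpha_s(a))=\beta_s(\varphi(a))$, and multiplicativity of $\varphi$ gives
\[\varphi\bigl(f(s)\alpha_s(g(s^{-1}t))\bigr)=\varphi(f(s))\,\beta_s\bigl(\varphi(g(s^{-1}t))\bigr).\]
Summing over $s$ yields $(\varphi\rtimes_1 G)(f*g)=(\varphi\rtimes_1 G)(f)*(\varphi\rtimes_1 G)(g)$. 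The involution $f^*(t)=\alpha_t(f(t^{-1})^*)$ is preserved in the same way, using $\varphi(a^*)=\varphi(a)^*$ together with equivariance.

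For the intertwining identity, the $f$ in the statement should be read as $f\in\ell^1(G;A)$, acting on vectors $\xi\in\ell^p(G;\mathcal H)$. Since both sides are bounded operators given by absolutely convergent sums, we compute them on $\xi$ directly:
\[\tilde\pi\rtimes\lambda^p\bigl((\varphi\rtimes_1 G)f\bigr)\xi=\sum_{s\in G}\tilde\pi_p\bigl(\varphi(f(s))\bigr)\lambda^p(s)\xi.\]
By part ii) of the preceding Remark, $\tilde\pi_p\circ\varphi=\widetilde{(\pi\circ\varphi)}_p$. This holds because $\tilde\pi_p(\varphi(a))$ acts at the point $t$ by
\[\pi\bigl(\beta_{t^{-1}}(\varphi(a))\bigr)=\pi\bigl(\varphi(\alpha_{t^{-1}}(a))\bigr),\]
again by equivariance. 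The right-hand sum is therefore exactly $\widetilde{\pi\circ\varphi}\rtimes\lambda^p(f)\xi$, which proves the identity.

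The only point requiring care is that $\varphi$ may be interchanged with the infinite sums, both in the convolution and in the integrated form. This follows from continuity of $\varphi$ together with absolute convergence, and no genuine obstacle is expected.
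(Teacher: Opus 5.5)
Your proposal is correct and follows the paper's proof. The intertwining identity is obtained exactly as in the paper, by expanding $\tilde{\pi}\rtimes\lambda^p((\varphi\rtimes_1 G)f)\xi=\sum_{s\in G}\tilde{\pi}_p(\varphi(f(s)))\lambda^p(s)\xi$ and using $\tilde{\pi}_p\circ\varphi=\widetilde{(\pi\circ\varphi)}_p$ termwise; in addition, you verify the contractive $*$-homomorphism claim (which the paper leaves implicit) and rightly read the quantifier as $f\in\ell^1(G;A)$ rather than the misprinted $\ell^1(G;\mathcal{H})$.
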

\begin{proof}
	Let $f\in \ell^1(G;A)$ and $g\in \ell^p(G; \mathcal{H})$. Then, using
	that $\tilde{\pi}(\varphi(a)) = \widetilde{\pi\circ \varphi}(a)$ for all
	$a\in A$, we get
	\begin{align*}
 \tilde{\pi}\rtimes \lambda^p((\varphi\rtimes_1 G) f) g
		= \sum_{s\in G} \tilde{\pi}(\varphi(f(s))) \lambda^p(s) g
		= \sum_{s\in G} \widetilde{\pi\circ \varphi}(f(s)) \lambda^p(s) g
		= \widetilde{\pi\circ \varphi} \rtimes \lambda^p(f)  g.
	\end{align*}
\end{proof}
\begin{rem}
	Let $p\in (1,\infty)$, let $A$ be a  $G$-C*-algebra and let $\pi\colon A \to \mathcal{B}
	(\mathcal{H})$ be a $*$-representation of $A$ on
	a Hilbert space $\mathcal{H}$.  Let $(\ell^p(G,\mathcal{H}), \widetilde{\pi}_p, \lambda^p)$
	be the $\pi$-valued $p$-regular covariant representation.
	Let $q\in (1,\infty)$ with $1/p+1/q = 1$ and let $f\in \ell^p(G;\mathcal{H})$
	and $g\in \ell^q(G;\mathcal{H})$. Then for any $h\in \ell^1(G;A)$ using
	equations \eqref{cp:eq:exinv} and \eqref{cp:eq:regrepinv} we obtain
	\begin{align*}
		\langle \widetilde{\pi}\rtimes \lambda^p(h) f, g \rangle
		&= \sum_{s\in G} \langle \widetilde{\pi}_p(h(s)) \lambda^p(s) f,g \rangle
		= \sum_{s\in G}  \langle \lambda^p(s)f, \widetilde{\pi}_q(h(s)^*)g\rangle \\
		&= \sum_{s \in G}	 \langle \lambda^p(s^{-1})f,\widetilde{\pi}_q(h(s^{-1})^*)g\rangle
		= \sum_{s\in G}	\langle f, \widetilde{\pi}_q(\alpha_s(h(s^{-1})^*))\lambda^q(s)g \rangle \\
		&= \langle f, \widetilde{\pi}\rtimes \lambda^q(h^*) g \rangle.
	\end{align*}
	Thus, for all $h\in \ell^1(G;A)$ and $f\in \ell^p(G;\mathcal{H})$,
	$g\in \ell^q(G;\mathcal{H})$ we obtain
	\begin{align}
		\langle \widetilde{\pi}\rtimes \lambda^p(h) f, g \rangle
		= \langle f, \widetilde{\pi}\rtimes \lambda^q(h^*) g \rangle.
	\end{align}
Using the standard fact that $\|f\|_p=\sup\{|\la f,g\ra | \mid g\in \ell^q(G;\mathcal{H}): \|g\|_q\le 1\}$ we arrive at the equalities
	\begin{align*}
&\quad\;
\| \widetilde{\pi}\rtimes \lambda^p(h^*) \|_{\mathcal{B}(\ell^p(G;\mathcal{H}))}\\
&=
		\sup\lbrace |\langle \tilde{\pi}\rtimes \lambda^p(h^*)f,g\rangle| \mid
		f\in \ell^p(G;\mathcal{H}),\, g\in \ell^q(G; \mathcal{H}): \, \|f\|_p, \, \|g\|_q\leq 1
		\rbrace\\
		&= \sup\lbrace |\langle f,\tilde{\pi}\rtimes \lambda^q(h) g\rangle| \mid
		f\in \ell^p(G;\mathcal{H}),\, g\in \ell^q(G; \mathcal{H}): \, \|f\|_p, \, \|g\|_q\leq 1
		\rbrace
		\\
		& =\| \widetilde{\pi}\rtimes \lambda^q(h)\|_{\mathcal{B}(\ell^q(G;\mathcal{H}))}.
	\end{align*}
\end{rem}

This allows us to consider the following Banach $*$-algebras which could be regarded as vector valued analogues of $\textup{PF}^*_p(G)$ considered in \cite{SW1} and \cite{SW2}.

\begin{defn}
	Let $A$ be a $G$-C*-algebra, let $p\in (1,\infty)$ with conjugate index $q$ and let $\pi\colon A \to \mathcal{B}(\mathcal{H})$
	be a $*$-representation of $A$ on a Hilbert space $\mathcal{H}$. Set
	\begin{align*}
		\| \cdot \|_{\pi,p}\colon  \ell^1(G; A )\to [0,\infty),\quad f \mapsto	 \mbox{max}\lbrace
		\| \widetilde{\pi}\rtimes \lambda^p(f)\|_{\mathcal{B}(\ell^p(G;\mathcal{H}))},\,
		\| \widetilde{\pi}\rtimes \lambda^q(f)\|_{\mathcal{B}(\ell^q(G;\mathcal{H}))}\rbrace
	\end{align*}
and
	\begin{align*}
		\|\cdot \|_{\textup{PF}^*_p(G;A)}\colon \ell^1(G;A)\to [0,\infty), \quad f \mapsto \sup
		\lbrace \| f \|_{\pi,p} \mid (\mathcal{H},\pi) \mbox{ is a} \ast\mbox{-representation of}\ A  	\rbrace.
	\end{align*}
\end{defn}

The next proposition is obvious.

\begin{prop}
	Let $A$ be a $G$-C*-algebra and $p\in (1,\infty)$. Then if $\pi$ is a $*$-representation of $A$ and $\pi'$ denotes the essential part of $\pi$,
	then for every $f\in \ell^1(G;A)$ we have
	\begin{align*}
		\|\pi \rtimes \lambda^p(f)\| = \| \pi'\rtimes \lambda^p(f)\|.
	\end{align*}
Consequently, the equality
	\begin{align*}
		\| \cdot \|_{\PF{p}{G}{A} } = \sup\lbrace \| \cdot \|_{\pi, p} \mid \pi \mbox{ is
		a non-degenerate} \ast\mbox{-representation of}\ A \rbrace
	\end{align*}
	holds. Furthermore, if $\pi$ is faithful, then $\|\cdot\|_{\pi,p}$ is a norm on $\ell^1(G;A)$.
\end{prop}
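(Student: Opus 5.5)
Three claims need proof: the essential-part equality, the reduction of the supremum to non-degenerate representations, and faithfulness giving a norm. Throughout, let $\mathcal{H}_0=\overline{\pi(A)\mathcal{H}}$ be the essential subspace of $\pi$ and $\pi'=\pi|_{\mathcal{H}_0}$. Then $\pi$ vanishes on $\mathcal{H}_0^\perp$, since $\langle \pi(a)\xi,\eta\rangle=\langle\xi,\pi(a^*)\eta\rangle=0$ for $\xi\in\mathcal{H}_0^\perp$ and all $\eta$.

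\textbf{The equality.} Decompose
\[
\ell^p(G;\mathcal{H})=\ell^p(G;\mathcal{H}_0)\oplus_p\ell^p(G;\mathcal{H}_0^\perp),
\]
where the pointwise splitting satisfies $\|f\|_p^p=\|f_0\|_p^p+\|f_1\|_p^p$.
1. The operators $\lambda^p(s)$ preserve both summands.
2. $\widetilde{\pi}_p(a)$ kills the second summand and maps everything into the first. Indeed, $(\widetilde\pi_p(a)f)(s)=\pi(\alpha_{s^{-1}}(a))f(s)$ lies in $\mathcal{H}_0$ and vanishes when $f(s)\in\mathcal{H}_0^\perp$.
3. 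Hence for $h\in\ell^1(G;A)$ we get $\widetilde\pi\rtimes\lambda^p(h)(f_0+f_1)=\widetilde{\pi'}\rtimes\lambda^p(h)f_0$.
4. Since $\|f_0\|_p\le\|f\|_p$, the norms satisfy $\|\widetilde\pi\rtimes\lambda^p(h)\|\le\|\widetilde{\pi'}\rtimes\lambda^p(h)\|$. Restricting to $f_1=0$ gives the reverse inequality.

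The same argument applies with $q$ in place of $p$, so $\|h\|_{\pi,p}=\|h\|_{\pi',p}$.

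\textbf{The supremum.} Each $\pi'$ is a non-degenerate representation of $A$. Thus every term of the original supremum equals a term in the supremum over non-degenerate representations, which gives the displayed equality; the reverse inequality is trivial.

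\textbf{Faithfulness.} The seminorm properties are clear, so only definiteness needs checking. Suppose $h\ne0$, say $h(s_0)\ne0$. Faithfulness of $\pi$ makes $\pi(\alpha_{s_0^{-1}}(h(s_0)))\neq 0$, so pick $\xi\in\mathcal{H}$ with $\pi(\alpha_{s_0^{-1}}(h(s_0)))\xi\ne0$. Let $f=\delta_e\xi$. Evaluating at $s_0$:
\[
(\widetilde\pi\rtimes\lambda^p(h)f)(s_0)=\sum_s\pi(\alpha_{s_0^{-1}}(h(s)))(\lambda^p(s)f)(s_0)=\pi(\alpha_{s_0^{-1}}(h(s_0)))\xi\ne0,
\]
because $(\lambda^p(s)f)(s_0)=f(s^{-1}s_0)$ is nonzero only for $s=s_0$. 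Therefore $\|h\|_{\pi,p}>0$.

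The only step requiring care is noting that $\pi$ vanishes on $\mathcal{H}_0^\perp$ and that the $\ell^p$-direct sum norm is monotone in each component; the rest is routine.
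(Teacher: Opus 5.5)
Your argument is correct and supplies the routine verification the paper omits; the paper calls the proposition obvious and gives no proof. The only slip is the claimed identity $\|f\|_p^p=\|f_0\|_p^p+\|f_1\|_p^p$, which fails for $p\neq 2$ because the norm on $\ell^p(G;\mathcal{H})$ is $\bigl(\sum_s(\|f_0(s)\|^2+\|f_1(s)\|^2)^{p/2}\bigr)^{1/p}$ (e.g.\ $f=\delta_e(\xi_0+\xi_1)$ with unit vectors $\xi_0\in\mathcal{H}_0$, $\xi_1\in\mathcal{H}_0^\perp$ gives $2^{p/2}\neq 2$), but you only use $\|f_0\|_p\le\|f\|_p$, which follows from the pointwise bound $\|f_0(s)\|\le\|f(s)\|$, so the rest of the proof is unaffected.
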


\begin{defn}
	Let $1<p< \infty$, let $A$ be a $G$-C*-algebra and let $\pi\colon A\rightarrow \mc{B}(\mc{H})$ be a faithful $*$-representation. We let $\PF{p,\pi}{G}{A}$ and $\PF{p}{G}{A}$ be the completion of $\ell^1(G;A)$ with respect to norms, respectively, $\|\cdot\|_{\pi,p}$ and $\| \cdot\|_{\PF{p}{G}{A}}$. 
\end{defn}

\begin{rem} \label{rem:contractive}
It is clear that $\PF{2}{G}{A}=A \rtimes_r G$ and $\PF{p}{G}{A}=\PF{q}{G}{A}$ when $p$ and $q$ are conjugates. Moreover the discussion before Proposition \ref{prop:pointwiseext} implies that we always have a natural contractive and injective $*$-algebra map $\iota_{A,p}\colon\ell^1(G;A) \to \PF{p}{G}{A}$, with dense image.
\end{rem}

\subsection{(In)dependence of the algebras $\PF{p,\pi}{G}{A}$ of the choice of the representation $\pi$}

We shall show that the Banach algebra $\PF{p}{G}{\mathbb{C}}$ is canonically isometrically isomorphic to the Banach algebra $\textup{PF}_p^*(G)$ introduced in \cite[Definition 4.3]{SW1} and $\PF{p,\pi}{G}{A}$ is \emph{isomorphic} to $\PF{p}{G}{A}$ for any faithful representation $\pi$ of $A$. As a consequence, we shall later obtain an isomorphism statement for the corresponding C$^*$-algebras.

First we need to establish a lemma, in whose proof we will use the family $\{r_n\}_{n\in\N}\subseteq \LL^{\infty}([0,1])$ of Rademacher functions (see \cite[Page 10]{DJT}). The Rademacher functions form an orthonormal set in $\LL^2([0,1])$. We will also use the Kahane's inequality \cite[Theorem 11.1]{DJT}, which says that for any $1\le p,q<\infty$ there is a constant $K_{p,q}>0$ such that
\begin{equation} \label{KKInequality}
\bigl(\int_{0}^{1}\bigl\|\sum_{k=1}^{n} x_k r_k(t)\bigr\|^q \md t\bigr)^{1/q}\le
K_{p,q}
\bigl(\int_{0}^{1}\bigl\|\sum_{k=1}^{n} x_k r_k(t)\bigr\|^p \md t\bigr)^{1/p}
\end{equation}
for any Banach space $X$ and vectors $x_{1},\dotsc,x_n\in X$.
%Of course we can take $K_{p,q}=1$ when $q\le p$.

\begin{lem}\label{lemma1}
Let $1\le p<\infty$, $A$ be a $G$-C$^*$-algebra, $\pi$ a non-degenerate $*$-representation of $A$ on a Hilbert space $\mc{H}_\pi$ and $\sigma=\pi(\cdot)\otimes I_{\mc{K}}\colon A\rightarrow \mc{B}(\mc{H}\otimes \mc{K})$ its amplification for a non-zero Hilbert space $\mc{K}$. Then there exists a constant $C_p\ge 1$ depending only on $p$, such that
\begin{equation}\label{eq3}
\|(\widetilde{\pi}_p\rtimes \lambda^p)(f)\|_{\mc{B}(\ell^p(G;\mc{H}))}\le
\|(\widetilde{\sigma}_p\rtimes \lambda^p)(f)\|_{\mc{B}(\ell^p(G;\mc{H}\otimes \mc{K}))}\le
C_p \|(\widetilde{\pi}_p\rtimes \lambda^p)(f)\|_{\mc{B}(\ell^p(G;\mc{H}))}
\end{equation}
for $f\in\ell^1(G;A).$
\end{lem}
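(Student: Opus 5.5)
The plan is to compare both operators directly, writing $T_\pi=(\widetilde{\pi}_p\rtimes \lambda^p)(f)$ and $T_\sigma=(\widetilde{\sigma}_p\rtimes \lambda^p)(f)$. The lower bound comes from restricting $T_\sigma$ to a copy of $\ell^p(G;\mc{H})$. The upper bound comes from decomposing along an orthonormal basis of $\mc{K}$ and averaging against Rademacher functions, with Kahane's inequality \eqref{KKInequality} converting the resulting $\ell^2$-sums into $L^p$-averages.

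\emph{Lower bound.} Fix a unit vector $\xi\in\mc{K}$ and let $V\colon \ell^p(G;\mc{H})\to \ell^p(G;\mc{H}\otimes\mc{K})$ be given by $(Vg)(s)=g(s)\otimes\xi$. This map is isometric. Let $P$ be the pointwise action of $I_{\mc{H}}\otimes|\xi\rangle\langle\xi|$. Then $P$ is a contractive projection onto the range of $V$, because it acts as an orthogonal projection in each fibre. Since $\widetilde{\sigma}_p(a)$ acts fibrewise by $\pi(\alpha_{s^{-1}}(a))\otimes I$ and $\lambda^p$ acts by shifts, we get $T_\sigma V=VT_\pi$. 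Hence $\|T_\pi g\|=\|T_\sigma Vg\|\le\|T_\sigma\|\|g\|$.

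\emph{Upper bound.} Fix an orthonormal basis $(e_j)$ of $\mc{K}$. Every $g\in\ell^p(G;\mc{H}\otimes\mc{K})$ can be written as $g(s)=\sum_j g_j(s)\otimes e_j$ with $g_j\in\ell^p(G;\mc{H})$. By density, and since all operators involved are bounded, it suffices to treat $g$ with finitely many nonzero $g_j$, say $j=1,\dots,n$. The same intertwining computation gives $(T_\sigma g)_j=T_\pi g_j$. For $t\in[0,1]$ put $F_t=\sum_{j=1}^n r_j(t)g_j\in\ell^p(G;\mc{H})$. By linearity, $T_\pi F_t=\sum_j r_j(t)\,(T_\sigma g)_j$.

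The key identity is the following. For vectors $x_1,\dots,x_n$ in a Hilbert space, orthonormality of the $r_j$ gives $\int_0^1\|\sum_j x_jr_j(t)\|^2\,\md t=\sum_j\|x_j\|^2$. Combined with \eqref{KKInequality} in both directions, this yields
\[
K_{2,p}^{-1}\Bigl(\sum_j\|x_j\|^2\Bigr)^{1/2}\le\Bigl(\int_0^1\bigl\|\textstyle\sum_j x_jr_j(t)\bigr\|^p\md t\Bigr)^{1/p}\le K_{p,2}\Bigl(\sum_j\|x_j\|^2\Bigr)^{1/2}.
\]
Apply this pointwise in $s$ with $x_j=g_j(s)$, raise to the $p$-th power and sum over $s$ (Fubini). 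Since $\|g(s)\|=(\sum_j\|g_j(s)\|^2)^{1/2}$, this gives $\|g\|_p^p\asymp\int_0^1\|F_t\|_p^p\,\md t$, and likewise for $T_\sigma g$ with $T_\pi F_t$. Therefore
\[
\|T_\sigma g\|_p^p\le K_{2,p}^p\int_0^1\|T_\pi F_t\|_p^p\,\md t\le K_{2,p}^p\|T_\pi\|^p\int_0^1\|F_t\|_p^p\,\md t\le (K_{2,p}K_{p,2})^p\|T_\pi\|^p\|g\|_p^p .
\]
So we may take $C_p=\max(1,K_{2,p}K_{p,2})$, which depends only on $p$.

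The main obstacle is bookkeeping rather than an idea. One must verify the fibrewise intertwining $(T_\sigma g)_j=T_\pi g_j$, which holds because both $\widetilde{\sigma}_p(a)$ and the shift act diagonally in the $e_j$-coordinate and the sum defining $T_\sigma$ converges absolutely for $f\in\ell^1(G;A)$. One must also justify the reduction to finitely many basis coordinates, which follows by continuity and density, with no need for separability of $\mc{K}$. Non-degeneracy of $\pi$ plays no role in the argument.
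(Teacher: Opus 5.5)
Your proof is correct and is essentially the paper's argument. The lower bound comes from the isometric embedding $g\mapsto g(\cdot)\otimes\xi$, and the upper bound from Rademacher averaging with Kahane's inequality used in both directions, giving $C_p=K_{p,2}K_{2,p}$. Using an orthonormal basis of $\mc{K}$ and truncating to finitely many coordinates is a slightly cleaner substitute for the paper's reduction to a separable $\mc{K}_0$ identified with $\overline{\lin}\{r_n\}\subseteq\LL^2([0,1])$.

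One cosmetic slip: with the convention of \eqref{KKInequality}, the constants in your two-sided estimate are interchanged. The lower bound should carry $K_{p,2}^{-1}$ and the upper bound $K_{2,p}$; this does not affect the product $C_p$.
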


\begin{proof}
Since representations $\widetilde{\pi}_p\rtimes \lambda^p, \widetilde{\sigma}_p\rtimes \lambda^p$ are contractive, it is enough to prove \eqref{eq3} for a fixed finitely supported function $f\colon G\rightarrow A$.

The first inequality in \eqref{eq3} is straightforward: take $\xi\in \ell^p(G;\mc{H})$, choose a unit vector $\zeta\in\mc{K}$ and define $\xi_\zeta\colon G\ni g\mapsto \xi(g)\otimes \zeta\in \mc{H}\otimes\mc{K}$. Then $\|\xi_\zeta\|_{\ell^p(G;\mc{H}\otimes \mc{K})}=\|\xi\|_{\ell^p(G;\mc{H})}$ and
\begin{align*}
\|(&\wt{\pi}_p\rtimes\lambda^p)(f)\xi\|_{\ell^p(G;\mc{H})}=
\bigl(\sum_{h\in G}\bigl\|\sum_{g\in G}  \pi\bigl(\alpha_{h^{-1}}(f(g))\bigr)\xi(g^{-1} h)\bigr\|^p_{\mc{H}}\bigr)^{1/p}\\
&=
\bigl(\sum_{h\in G}\bigl\|\sum_{g\in G}  \pi\bigl(\alpha_{h^{-1}}(f(g))\bigr)\xi(g^{-1} h)\otimes \zeta\bigr\|^p_{\mc{H}\otimes \mc{K}}\bigr)^{1/p}=
\bigl(\sum_{h\in G}\bigl\|
((\wt{\sigma}_p\rtimes\lambda^p)(f) \xi_\zeta)(h)
\bigr\|^p_{\mc{H}\otimes \mc{K}}\bigr)^{1/p}\\
&=
\bigl\|
(\wt{\sigma}_p\rtimes\lambda^p)(f) \xi_\zeta\|_{\ell^p(G;\mc{H}\otimes \mc{K})} \le
\|(\wt{\sigma}_p\rtimes\lambda^p)(f)\| \|\xi\|_{\ell^p(G;\mc{H})},
\end{align*}
which proves the first inequality in \eqref{eq3}. Fix $\xi\in \ell^p(G;\mc{H}\otimes \mc{K})$ with norm $\le 1$. To establish the second inequality, we need to prove that
\begin{equation}\label{eq4}
\|(\widetilde{\sigma}_p\rtimes \lambda^p)(f)\xi\|_{\ell^p(G;\mc{H}\otimes \mc{K})}\le
C_p \|(\widetilde{\pi}_p\rtimes \lambda^p)(f) \|_{\ell^p(G;\mc{H})}
\end{equation}
with a constant $C_p$ depending only on $p$. Since $\xi$ is $p$-summable, there is a separable Hilbert space $\mc{K}_0\subseteq \mc{K}$ such that $\xi(h)\in \mc{H}\otimes \mc{K}_0$ for $h\in G$. Then we easily see that \eqref{eq4} is equivalent to
\begin{equation}\label{eq5}
\|(\widetilde{\sigma}_p\rtimes \lambda^p)(f)\xi\|_{\ell^p(G;\mc{H}\otimes \mc{K}_0)}\le
C_p \|(\widetilde{\pi}_p\rtimes \lambda^p)(f) \|_{\ell^p(G;\mc{H})},
\end{equation}
where by a slight abuse of the notation we write also $\sigma$ for the amplification of $\pi$ on $\mc{H}\otimes \mc{K}_0$. By passing via isometry, we can in fact assume that $$ \mc{K}_0=\mc{R}=\ov{\lin}\{r_n\mid n\in\N\}\subseteq \LL^2(\left[0,1\right]).$$ For $h\in G$, we write
\[
\xi(h)=
\sum_{n=1}^{\infty}  \eta_n(h)\otimes r_n,\quad
\textnormal{where}\;\;\;
\eta_n\colon G\ni h\mapsto(\mathrm{id}\otimes   r_n^*) (\xi(h))\in \mc{H}.
\]
The above series converges in the norm of $\mc{H}\otimes \mc{R}$ for every $h\in G$. Then also
\[
\bigl\|\xi-\sum_{n=1}^{N} \eta_n(\cdot)\otimes r_n\bigr\|_{\ell^p(G;\mc{H}\otimes\mc{R})}^p
=
\sum_{h\in G} \bigl\|
\sum_{n=N+1}^{\infty} \eta_n(h)\otimes r_n\bigr\|_{\mc{H}\otimes \mc{R}}^p \xrightarrow[N\to\infty]{}0
\]
by the bounded convergence theorem, where the uniform bound is given by
\[
\bigl\|
\sum_{n=N+1}^{\infty} \eta_n(h)\otimes r_n\bigr\|_{\mc{H}\otimes \mc{R}}^p=
\bigl(\sum_{n=N+1}^{\infty} \bigl\|\eta_n(h)\bigr\|_{\mc{H}}^{2 }\bigr)^{p/2}\le
\bigl(\sum_{n=1}^{\infty} \bigl\|\eta_n(h)\bigr\|_{\mc{H}}^{2}\bigr)^{p/2}=
\|\xi(h)\|^{p}.
\]
This proves that $\sum_{n=1}^{N} \eta_n(\cdot)\otimes r_n\xrightarrow[N\to\infty]{}\xi$ in $\ell^p(G;\mc{H}\otimes\mc{R})$. With these preliminaries we calculate the left hand side of \eqref{eq5}
\begin{align*}
\|(\wt{\sigma}_p\rtimes\lambda^p)(f) \xi\|_{\ell^p(G;\mc{H}\otimes \mc{R})}&=
\lim_{N\to\infty}
\bigl\|(\wt{\sigma}_p\rtimes\lambda^p)(f)
\bigl(
\sum_{n=1}^{N} \eta_n(\cdot)\otimes r_n
\bigr)
\bigr\|_{\ell^p(G;\mc{H}\otimes \mc{R})}
\\
&=
\lim_{N\to\infty}
\bigl(
\sum_{h\in G}
\bigl\|
\sum_{g\in G}
\bigl(
\wt{\sigma}_p(f(g))\lambda^p(g)
\bigl(\sum_{n=1}^{N}\eta_n(\cdot)\otimes r_n\bigr)
\bigr)(h)\bigr\|^p_{\mc{H}\otimes\mc{R}}\bigr)^{1/p}\\
&=
\lim_{N\to\infty}
\bigl(
\sum_{h\in G}
\bigl\|
\sum_{g\in G}
\sum_{n=1}^{N}
 \pi (\alpha_{h^{-1}}(f(g)))
\eta_n(g^{-1}h)\otimes r_n
\bigr\|^p_{\mc{H}\otimes\mc{R}}\bigr)^{1/p}.
\end{align*}
Note that the sum over $g\in G$ is finite. As $\mc{R}\subseteq \LL^2([0,1])$, we can calculate the last norm in $\mc{H}\otimes \LL^2([0,1])\simeq \LL^2([0,1]; \mc{H})$, using \eqref{KKInequality}:
\begin{align*}
\|(\wt{\sigma}_p\rtimes\lambda^p)(f) \xi&\|_{\ell^p(G;\mc{H}\otimes \mc{R})}\\
&= \lim_{N\to\infty}
\bigl(
\sum_{h\in G}
\bigl(
\int_{0}^{1}
\bigl\|
\sum_{g\in G}
\sum_{n=1}^{N}
 \pi (\alpha_{h^{-1}}(f(g)))
\eta_n(g^{-1}h)r_n(t)
\bigr\|^2_{\mc{H}}
\md t
\bigr)^{p/2}\bigr)^{1/p}\\
&\le
\limsup_{N\to\infty}
\bigl(
\sum_{h\in G}
\bigl(
K_{p,2}
\bigl[
\int_{0}^{1}
\bigl\|
\sum_{g\in G}
\sum_{n=1}^{N}
 \pi(\alpha_{h^{-1}}(f(g)))
\eta_n(g^{-1}h)r_n(t)
\bigr\|^p_{\mc{H}}
\md t
\bigr]^{1/p}\bigr)^{p}
\bigr)^{1/p}\\
&=
K_{p,2}
\limsup_{N\to\infty}
\bigl(
\int_{0}^{1}
\sum_{h\in G}
\bigl\|
\sum_{n=1}^{N} r_n(t)
((\wt{\pi}_p\rtimes\lambda^p)(f)\eta_n)(h) \bigr\|^p_{\mc{H}}
\md t
\bigr)^{1/p}.
\end{align*}
We have switched the order of integral and sum, since the integrand is non-negative and we can restrict to a countable subset of $G$. Then
\begin{align*}
\|(\wt{\sigma}_p\rtimes\lambda^p)(f) \xi&\|_{\ell^p(G;\mc{H}\otimes \mc{R})}\\
&\le
K_{p,2}
\limsup_{N\to\infty}
\bigl(
\int_{0}^{1}
\bigl\|
\sum_{n=1}^{N} r_n(t)
(\wt{\pi}_p\rtimes\lambda^p)(f)\eta_n \bigr\|^p_{\ell^p(G;\mc{H})}
\md t
\bigr)^{1/p}\\
&\le
K_{p,2}
\|(\wt{\pi}_p\rtimes\lambda^p)(f)\|
\limsup_{N\to\infty}
\bigl(
\int_{0}^{1}
\bigl\|
\sum_{n=1}^{N} r_n(t)
 \eta_n \bigr\|^p_{\ell^p(G;\mc{H})}
\md t
\bigr)^{1/p}\\
&=
K_{p,2}
\|(\wt{\pi}_p\rtimes\lambda^p)(f)\|
\limsup_{N\to\infty}
\bigl(
\sum_{h\in G}
\int_{0}^{1}
\bigl\|
\sum_{n=1}^{N} r_n(t)
 \eta_n(h) \bigr\|^p_{ \mc{H}}
\md t
\bigr)^{1/p}\\
&\le
K_{p,2}
\|(\wt{\pi}_p\rtimes\lambda^p)(f)\|
\limsup_{N\to\infty}
\bigl(
\sum_{h\in G}
\bigl(
K_{2,p}\bigl[\int_{0}^{1}
\bigl\|
\sum_{n=1}^{N} r_n(t)
 \eta_n(h) \bigr\|^2_{ \mc{H}}
\md t\bigr]^{1/2}\bigr)^{p}
\bigr)^{1/p}\\
&=
K_{p,2} K_{2,p}
\|(\wt{\pi}_p\rtimes\lambda^p)(f)\|
\limsup_{N\to\infty}
\bigl\|
\sum_{n=1}^{N}
 \eta_n(\cdot)\otimes r_n \bigr\|_{\ell^p(G; \mc{H}\otimes \mc{R})}\\
 &=
K_{p,2} K_{2,p}
\|(\wt{\pi}_p\rtimes\lambda^p)(f)\|
\|
\xi \|_{\ell^p(G; \mc{H}\otimes \mc{R})}.
\end{align*}
This ends the proof with $C_p=K_{p,2} K_{2,p}$.
\end{proof}

Next we show that in the case of $A=\C$ and trivial action, the constant in the above lemma can be improved to $1$.

\begin{prop} \label{prop:isometry}
Take $p\in[1, \infty)$, a Hilbert space $\mathcal{H}$ and consider representations $\lambda^p\colon G\rightarrow  \mbox{Isom}( \ell^p(G))$ and $\lambda^{p,\mc{H}}\colon  G\rightarrow \mbox{Isom}(\ell^p(G;\mc{H}))$ given by the left translation. Then for every  $f\in \ell^1(G)$ we have
\[
\|\lambda^p(f)\|_{\mc{B}(\ell^p(G))}=
\|\lambda^{p,\mc{H}}(f)\|_{\mc{B}(\ell^p(G;\mc{H}))}.
\]
\end{prop}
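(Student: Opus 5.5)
The plan is to prove the two inequalities separately; the lower bound is the easy half of Lemma \ref{lemma1}, and the upper bound uses a Gaussian averaging argument in place of the Rademacher/Kahane argument. Since both operators are contractive in $\|f\|_1$, it suffices to treat finitely supported $f\in \ell^1(G)$. For the inequality $\|\lambda^p(f)\|\le \|\lambda^{p,\mc{H}}(f)\|$, I would repeat the first step of the proof of Lemma \ref{lemma1}. Fix a unit vector $\zeta\in\mc{H}$ and send $\xi\in\ell^p(G)$ to $\xi\zeta\in\ell^p(G;\mc{H})$. This map is isometric, and since $f$ is scalar-valued it intertwines $\lambda^p(f)$ with $\lambda^{p,\mc{H}}(f)$.

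For the reverse inequality, the key observation is that $T=\lambda^{p,\mc{H}}(f)$ acts by a scalar convolution kernel. Hence for every bounded functional $\omega$ on $\mc{H}$ we have $\omega\circ (T\xi)=\lambda^p(f)(\omega\circ\xi)$. It therefore suffices to find a family of functionals that reproduces the $\mc{H}$-norm in $p$-th power on average, with no loss of constant.

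\textbf{Reduction to finite dimensions.} Fix $\xi\in\ell^p(G;\mc{H})$. By density I may assume $\xi$ is finitely supported. Then all vectors $\xi(h)$ and $(T\xi)(h)$ lie in the finite-dimensional subspace $\mc{H}_0$ spanned by the finitely many vectors $\xi(h)$. So I can take $\mc{H}=\C^n$ with its standard inner product.

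\textbf{Gaussian averaging.} Let $\gamma=(\gamma_1,\dots,\gamma_n)$ be a vector of independent standard complex Gaussian variables. By rotation invariance, for every $x\in\C^n$ the variable $\la x,\gamma\ra$ has the same distribution as $\|x\|\gamma_1$. Hence
\[
\|x\|^p = c_p^{-1}\,\mathbb{E}\,|\la x,\gamma\ra|^p, \qquad c_p=\mathbb{E}|\gamma_1|^p\in(0,\infty).
\]
Applying this pointwise, exchanging sum and expectation by Tonelli, and using the intertwining above with $\omega=\la\cdot,\gamma\ra$ (for fixed $\gamma$) gives
\begin{align*}
\|T\xi\|_p^p
&= c_p^{-1}\,\mathbb{E}\sum_{h\in G}\bigl|\la (T\xi)(h),\gamma\ra\bigr|^p
= c_p^{-1}\,\mathbb{E}\,\bigl\|\lambda^p(f)\la\xi(\cdot),\gamma\ra\bigr\|_p^p\\
&\le \|\lambda^p(f)\|^p\,c_p^{-1}\,\mathbb{E}\sum_{h\in G}|\la\xi(h),\gamma\ra|^p
= \|\lambda^p(f)\|^p\,\|\xi\|_p^p .
\end{align*}
This yields $\|\lambda^{p,\mc{H}}(f)\|\le\|\lambda^p(f)\|$.

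The step needing the most care is the choice of averaging device. Rademacher functions, as used in Lemma \ref{lemma1}, only give Kahane constants. Gaussians give an exact identity because of rotation invariance, and this is precisely what produces the constant $1$. The remaining checks are minor: the identity in distribution for complex Gaussians, the exact form of $(T\xi)(h)$ as a finite scalar combination of the values $\xi(g^{-1}h)$, and the density reduction. Note that this argument works for all $p\in[1,\infty)$, including $p=1$.
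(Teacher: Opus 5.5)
Your proof is correct. For the easy inequality you argue exactly as the paper does, but for the nontrivial inequality $\|\lambda^{p,\mc{H}}(f)\|\le\|\lambda^p(f)\|$ you take a genuinely different route.

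\textbf{The paper's route.} The paper argues by contradiction using tensor powers. It assumes $\|\lambda^{p,\mc{H}}(f)\xi\|\ge(1+\eps)\|\lambda^p(f)\|$ for some unit vector $\xi$, and builds the tensor power $\eta_m$ of $\xi$ in $\ell^p(G^{\times m};\mc{H}^{\otimes m})$. It then shows that the operator $x_m$ induced by $f^{\otimes m}$ satisfies $\|x_m\|\ge(1+\eps)^m\|\lambda^p(f)\|^m$. On the other hand, Lemma \ref{lemma1} applied to the group $G^{\times m}$ gives $\|x_m\|\le C_p\|\lambda^p(f^{\otimes m})\|=C_p\|\lambda^p(f)\|^m$, where the equality uses the Chevet--Saphar tensor norm. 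So the Kahane constant $C_p$ disappears after taking $m$-th roots.

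\textbf{Your route.} You replace this bootstrap by the exact identity $\|x\|^p=c_p^{-1}\mathbb{E}|\la\gamma,x\ra|^p$ for rotation-invariant complex Gaussians. This is the classical Marcinkiewicz--Zygmund device.

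\textbf{What each buys.} Your argument is direct and shorter. It does not need the upper half of Lemma \ref{lemma1}, Kahane's inequality, or any tensor-norm identification. It also proves more: every bounded operator on scalar $\ell^p(G)$, or on any scalar $L^p$ space, has a $\mc{H}$-valued extension of the same norm. The convolution structure enters only through the fact that the kernel is scalar. The paper's route instead reuses Lemma \ref{lemma1}, which it needs anyway for general $A$. It is a soft argument that turns any uniform-constant estimate that is stable under tensor powers into an isometric one.

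\textbf{Why the scalar kernel matters.} For coefficients acting nontrivially on $\mc{H}$, as in Lemma \ref{lemma1}, your method only gives a constant. There the Gaussian average $\mathbb{E}\|(\mathrm{id}\otimes\la\gamma,\cdot\ra)y\|^p$ for $y\in\mc{H}\otimes\C^n$ is merely comparable to $\|y\|^p$, not a fixed multiple of it.

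\textbf{A small convention slip.} In this paper inner products are linear in the second variable. So $\la\cdot,\gamma\ra$ is conjugate-linear and intertwines $\lambda^{p,\mc{H}}(f)$ with $\lambda^p(\bar f)$, not with $\lambda^p(f)$. Take $\omega=\la\gamma,\cdot\ra$ instead; nothing else changes, since $|\la\gamma,x\ra|=|\la x,\gamma\ra|$.
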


\begin{proof}
We have established the inequality $\|\lambda^p(f)\|_{\mc{B}(\ell^p(G))}\le
\|\lambda^{p,\mc{H}}(f)\|_{\mc{B}(\ell^p(G;\mc{H}))}$ as a part of Lemma \ref{lemma1} for $A=\C$. Consequently, it is enough to prove the converse inequality. Furthermore, arguing by approximation, we only need to establish this inequality for finitely supported functions.

Assume by contradiction, that we have $\bigl\|\lambda^p(f)\bigr\|_{\mc{B}(\ell^p(G))}< \bigl\|\lambda^{p,\mc{H}}(f) \bigr\|_{\mc{B}(\ell^p(G;\mathcal{H}))}$ for a finitely supported $f\in \ell^1(G)$. Then we can find $\eps>0$ and a finitely supported function $\xi\in \ell^p(G;\mc{H})$ with $\xi(g)=\sum_{n=1}^{N} \xi_{g,n} e_n\,(g\in G)$ for an orthonormal family $\{e_n\}_{n=1}^{N}\subseteq \mathcal{H}$ such that
\begin{equation}\label{eq1}
\|\xi\|=1,\quad (1+ \eps)
\bigl\|\lambda^p(f) \bigr\|_{\mc{B}(\ell^p(G))}\le \bigl\|\lambda^{p,\mc{H}}(f)\xi \bigr\|_{\ell^p(G;\mathcal{H})}.
\end{equation}
Fix $m\geq 2$. For simplicity, we shall write $\mathbf{n}=(n_1,\ldots,n_m)\in \{1,\ldots,N\}^m$ and $e_{\mathbf{n}}:=e_{n_1} \otimes \cdots \otimes  e_{n_m} \in \mathcal{H}^{\otimes m}$. Consider the vector
\[
\eta_m \colon G^{\times m}\ni  (g_1,\dotsc,g_m) \mapsto
\sum_{\mathbf{n} \in \{1,\ldots,N\}^m}
\xi_{g_1,n_1}\cdots \xi_{g_m,n_m}
e_{\mathbf{n}}\in \mc{H}^{\otimes m}
\]
in $\ell^p(G^{\times m};\mathcal{H}^{\otimes m})$. As easily checked, it has norm $1$:
\begin{align*}
\|\eta_m\|&=
\bigl(
\sum_{g_1,\dotsc,g_m\in G}
\bigl\|
\sum_{\mathbf{n} \in \{1,\ldots,N\}^m}
\xi_{g_1,n_1}\cdots \xi_{g_m,n_m}
e_{\mathbf{n}}
\bigr\|^{p}
\bigr)^{1/p}\\
&=
\bigl(
\sum_{g_1,\dotsc,g_m\in G}
\bigl(\sum_{n_1,\dotsc,n_m=1}^{N}
\bigl|
\xi_{g_1,n_1}\cdots \xi_{g_m,n_m}\bigr|^{2}\bigr)^{p/2}
\bigr)^{1/p}\\
&=
\bigl(
\sum_{g_1,\dotsc,g_m\in G}
\bigl(\sum_{n_1=1}^{N}
|\xi_{g_1,n_1}|^{2}\bigr)^{p/2}
\cdots
\bigl(
\sum_{n_m=1}^{N}
|\xi_{g_m,n_m}|^{2}
\bigr)^{p/2}
\bigr)^{1/p}\\
&=
\bigl(
\sum_{g_1\in G}
\bigl(\sum_{n_1=1}^{N}
|\xi_{g_1,n_1}|^{2}\bigr)^{p/2}
\bigr)^{1/p}
\cdots\bigl(\sum_{g_m\in G}
\bigl(
\sum_{n_m=1}^{N}
|\xi_{g_m,n_m}|^{2}
\bigr)^{p/2}
\bigr)^{1/p}=1.
\end{align*}
Next, define
\[
x_m=\sum_{g_1,\dotsc,g_m\in G}
f(g_1)\cdots f(g_m)\lambda^{p,\mathcal{H}^{\otimes m}}(g_1,\dotsc,g_m)\in \mc{B}(\ell^p(G^{\times m};\mathcal{H}^{\otimes m})).
\]
where $\lambda^{p,\mc{H}^{\otimes m}}\colon G^{\times m}\rightarrow \mbox{Isom}(\ell^p(G^{\times m};\mc{H}^{\otimes m}))$ is the left translation representation.
We can use Lemma \ref{lemma1} for the group $G^{\times m}$ and Hilbert space $\mathcal{H}^{\otimes m}$, to get
\begin{equation}\label{eq2}
\|x_m\|\le C_p
 \bigl\|\lambda^p(f^{\otimes m})\bigr\|_{\mc{B} (\ell^p(G^{\times m}))}
=
C_p \|\lambda^p(f)\|_{\mc{B}(\ell^p(G))}^m.
\end{equation}
To see the above equality, observe that we have the canonical isometric isomorphism $\ell^p(G^{\times m})\simeq \ell^p(G)\otimes_{d_p}\ell^{p}(G^{\times (m-1)})$, where $d_p$ is the Chevet-Saphar tensor product (see \cite[Section 4]{Chevet} and \cite[Section 6.2]{Ryan}). Under this isomorphism $\lambda^p(f^{\otimes m})\simeq \lambda^p(f) \otimes \lambda^p(f^{\otimes (m-1)})$, hence the claim holds by induction. On the other hand
\begin{align*}
\|x_m\|&\geq \|x_m \eta_m\|\\
&=
\bigl\|
\bigl(
\sum_{h_1,\dotsc,h_m\in G}
f(h_1)\cdots f(h_m) \lambda^{p,\mathcal{H}^{\otimes m}} (h_1,\dotsc,h_m)
\bigr)
\eta_m\bigr\|\\
&=
\bigl(\sum_{g_1,\dotsc,g_m\in G}\bigl\|
\sum_{h_1,\dotsc,h_m\in G}
f(h_1)\cdots f(h_m)
\eta_m(h_1^{-1} g_1,\dotsc, h_m^{-1} g_m)
\bigr\|^p\bigr)^{1/p}\\
&=
\bigl(\sum_{g_1,\dotsc,g_m\in G}\bigl\|
\sum_{h_1,\dotsc,h_m\in G}
f(h_1)\cdots f(h_m)
\sum_{\mathbf{n}\in \{1,\dotsc,N\}^m}
\xi_{h_1^{-1} g_1, n_1}\cdots
\xi_{h_m^{-1} g_m, n_m} e_{\mathbf{n}}
\bigr\|^p\bigr)^{1/p}\\
&=
\bigl(\sum_{g_1,\dotsc,g_m\in G}
\bigl(
\sum_{\mathbf{n}\in \{1,\dotsc,N\}^m}
\bigl|
\sum_{h_1,\dotsc,h_m\in G}
f(h_1)\cdots f(h_m)
\xi_{h_1^{-1} g_1, n_1}\cdots
\xi_{h_m^{-1} g_m, n_m}
\bigr|^2\bigr)^{p/2}\bigr)^{1/p}\\
&=
\bigl(\sum_{g_1,\dotsc,g_m\in G}
\bigl(
\sum_{\mathbf{n}\in \{1,\dotsc,N\}^m}
\bigl|
\sum_{h_1\in G}
f(h_1)
\xi_{h_1^{-1} g_1, n_1}
\bigr|^2\cdots
\bigl|
\sum_{h_m\in G}
f(h_m)
\xi_{h_m^{-1} g_m, n_m}
\bigr|^2  \bigr)^{p/2}\bigr)^{1/p}\\
&=
\bigl(\sum_{g_1,\dotsc,g_m\in G}
\bigl(
\sum_{n_1=1}^N
\bigl|
\sum_{h_1\in G}
f(h_1)
\xi_{h_1^{-1} g_1, n_1}
\bigr|^2\bigr)^{p/2} \cdots
\bigl(
\sum_{n_m=1}^N
\bigl|
\sum_{h_m\in G}
f(h_m)
\xi_{h_m^{-1} g_m, n_m}
\bigr|^2\bigr)^{p/2}
\bigr)^{1/p}\\
&=
\bigl(
\sum_{g\in G}\bigl(
\sum_{n=1}^{N}
\bigl|
\sum_{h\in G} f(h)\xi_{h^{-1} g,n}
\bigr|^2
\bigr)^{p/2}
\bigr)^{m/p}=
\bigl(
\sum_{g\in G}
\bigl\|
\sum_{n=1}^{N}
\sum_{h\in G} f(h)\xi_{h^{-1} g,n} e_n
\bigr\|^p
\bigr)^{m/p}\\
&=
\bigl(
\sum_{g\in G}
\bigl\|
(\lambda^{p,\mc{H}}(f) \xi )(g)
\bigr\|^p
\bigr)^{m/p}
=
\bigl\|
\lambda^{p,\mc{H}}(f)\xi \bigr\|^m_{\ell^p(G;\mc{H})}
\\&\geq (1+\eps )^m \bigl\|\lambda^p(f) \bigr\|_{\mc{B}(\ell^p(G))}^m,
\end{align*}
where the last inequality holds by \eqref{eq1}. Combining this with \eqref{eq2} gives contradiction for $m$ big enough,  so that $(1+\eps)^m> C_p$.
\end{proof}

We immediately obtain a conclusion in the case of $A=\C$.

\begin{thm}\label{thm:coincide}
For every group $G$ and $1< p <\infty$ we have
	$\PF{p}{G}{\mathbb{C}} =\textup{PF}^*_p(G)$, where the latter denotes the Banach $*$-algebra introduced in \cite{SW2}.
\end{thm}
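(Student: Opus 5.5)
The plan is to compare the two norms on the common dense subalgebra $\ell^1(G)=\ell^1(G;\C)$ and show they agree. Recall from \cite{SW2} that $\textup{PF}^*_p(G)$ is the completion of $\ell^1(G)$ in the norm
\[
\|f\|_{\textup{PF}^*_p(G)}=\max\{\|\lambda^p(f)\|_{\mc{B}(\ell^p(G))},\ \|\lambda^q(f)\|_{\mc{B}(\ell^q(G))}\},
\]
with $q$ the conjugate index. On the other side, $\|f\|_{\PF{p}{G}{\C}}$ is the supremum of $\|f\|_{\pi,p}$ over all $*$-representations $\pi$ of $\C$. Once the two norms coincide on $\ell^1(G)$, the identity map extends to an isometric isomorphism of the completions. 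It is automatically a $*$-isomorphism, because both algebras carry the involution $f^*(s)=\overline{f(s^{-1})}$ on $\ell^1(G)$: the action on $\C$ is trivial, so $\alpha_s(h(s^{-1})^*)=\overline{h(s^{-1})}$.

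First I reduce to non-degenerate representations, using the proposition just before the definition of $\PF{p,\pi}{G}{A}$. A non-degenerate $*$-representation of $\C$ on $\mc{H}\neq 0$ is $\pi(z)=zI_{\mc{H}}$. Hence $\widetilde{\pi}_p(z)$ is scalar multiplication on $\ell^p(G;\mc{H})$, and
\[
(\widetilde{\pi}_p\rtimes\lambda^p)(f)=\sum_s f(s)\lambda^{p,\mc{H}}(s)=\lambda^{p,\mc{H}}(f).
\]
Proposition \ref{prop:isometry} gives $\|\lambda^{p,\mc{H}}(f)\|=\|\lambda^p(f)\|_{\mc{B}(\ell^p(G))}$, and the same holds with $q$ in place of $p$, since $q\in(1,\infty)$ as well. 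Therefore $\|f\|_{\pi,p}=\|f\|_{\textup{PF}^*_p(G)}$ for every non-degenerate $\pi$ on a nonzero space. The zero representation contributes $0$, so taking the supremum yields $\|f\|_{\PF{p}{G}{\C}}=\|f\|_{\textup{PF}^*_p(G)}$.

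The only delicate point is matching definitions. I need to check that \cite{SW2} really defines $\textup{PF}^*_p(G)$ as the $\ell^1(G)$-completion under the max of the $p$- and $q$-norms, rather than via closures of $\lambda^p(\ell^1(G))$ inside $\mc{B}(\ell^p(G))\oplus\mc{B}(\ell^q(G))$. These descriptions are equivalent, because the diagonal map $f\mapsto(\lambda^p(f),\lambda^q(f))$ is an isometry for the max norm. The substantive analytic input, namely removing the constant $C_p$, is already supplied by Proposition \ref{prop:isometry}.
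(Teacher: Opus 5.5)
Your proof is correct and follows essentially the same route as the paper: it too writes $\|\cdot\|_{\PF{p}{G}{\C}}$ as the supremum over Hilbert spaces $\mc{H}$ of $\max\{\|\lambda^{p,\mc{H}}(f)\|,\|\lambda^{q,\mc{H}}(f)\|\}$, and then uses Proposition \ref{prop:isometry} to remove the dependence on $\mc{H}$. Your extra steps (reducing to non-degenerate representations $z\mapsto zI_{\mc{H}}$, handling the zero representation, and checking that the involutions agree) spell out details the paper leaves implicit.
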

\begin{proof}
The definition of $\textup{PF}^*_p(G)$ in \cite{SW1} uses the norm
\begin{align*}
	\| \cdot \|_{\mathrm{id},p}\colon \ell^1(G)\to [0,\infty),\quad f \mapsto	 \mbox{max}\lbrace
	\|  \lambda^p(f)\|_{\mathcal{B}(\ell^p(G))},\,
	\|  \lambda^q(f)\|_{\mathcal{B}(\ell^q(G))}\rbrace,
\end{align*}
whereas the norm in $\PF{p}{G}{\mathbb{C}}$ is given by
\begin{align*}
	\| \cdot \|_{\PF{p}{G}{\C}}\colon \ell^1(G)&\to [0,\infty), \\
& f \mapsto	\sup_{\mathcal{H} - \textup{Hilbert space}} \mbox{max}\lbrace
	\|  \lambda^{p,\mc{H}}(f) \|_{\mathcal{B}(\ell^p(G ; \mathcal{H}))},\,
	\|  \lambda^{q,\mc{H}}(f)\|_{\mathcal{B}(\ell^q(G ; \mathcal{H}))}\rbrace,
\end{align*}
where $\lambda^{p,\mc{H}}$ is the left translation representation of $G$ on $\ell^p(G;\mc{H})$. The independence of the norm of the choice of $\mathcal{H}$  is precisely the content of Proposition \ref{prop:isometry}.	
\end{proof}

Theorem \ref{thm:coincide} can be interpreted by saying that the Banach $*$-algebra $\PF{p}{G}{\C}$ is (isometrically) independent from the choice of representation of $\C$. For a general $G$-C$^*$-algebra $A$ we have a slightly weaker statement -- $\PF{p}{G}{A}$ is independent from such a choice up to a (not necessarily isometric) isomorphism.

\begin{prop}\label{prop1}
Let $1\le p<\infty$, $\pi,\sigma$ be non-degenerate $*$-representations of $A$ on respectively $\mc{H}_\pi,\mc{H}_\sigma$. Assume that $\pi$ is faithful and take $f\in\ell^1(G;A)$. Then
\begin{itemize}
\item[(i)] if $\pi\simeq \aleph_0\cdot \pi$, i.e.~$\pi$ is unitarily equivalent with $\aleph_0$-many copies of $\pi$, we have
\begin{equation}\label{eq7}
\| (\wt{\sigma}_p\rtimes \lambda^p)(f) \|_{\mc{B}(\ell^p(G;\mc{H}_\sigma))}\le
\|(\wt{\pi}_p\rtimes\lambda^p)(f)\|_{\mc{B}(\ell^p(G;\mc{H}_\pi))};
\end{equation}
\item[(ii)] in general, there is a constant $C_p\ge 1$ depending only on $p$, such that
\begin{equation}\label{eq8}
\| (\wt{\sigma}_p\rtimes \lambda^p)(f) \|_{\mc{B}(\ell^p(G;\mc{H}_\sigma))}\le C_p
\|(\wt{\pi}_p\rtimes\lambda^p)(f)\|_{\mc{B}(\ell^p(G;\mc{H}_\pi))}.
\end{equation}
\end{itemize}
\end{prop}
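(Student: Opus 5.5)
Part (ii) will follow from part (i) combined with Lemma \ref{lemma1}. Set $\pi_\infty=\aleph_0\cdot\pi=\pi\otimes I_{\ell^2}$, which satisfies $\pi_\infty\simeq\aleph_0\cdot\pi_\infty$. Part (i) applied to $\pi_\infty$ gives $\|(\wt\sigma_p\rtimes\lambda^p)(f)\|\le\|(\wt{\pi_\infty})_p\rtimes\lambda^p(f)\|$. Lemma \ref{lemma1} bounds the right-hand side by $C_p\|(\wt\pi_p\rtimes\lambda^p)(f)\|$. So the real work is (i).

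For (i), the plan is to use Voiculescu's theorem in the following form. Since $\pi$ is faithful and $\pi\simeq\aleph_0\cdot\pi$, every representation $\sigma$ is approximately contained in $\pi$ in the strong sense. Given a separable subspace, finitely many $a_1,\dots,a_k\in A$, finitely many vectors and $\varepsilon>0$, there is an isometry $V$ from the relevant subspace of $\mc H_\sigma$ into $\mc H_\pi$ with $\|V\sigma(a_i)v_j-\pi(a_i)Vv_j\|<\varepsilon$. This follows from the Voiculescu/Hadwiger form: $\sigma\oplus\pi\sim_{a}\pi$ for faithful $\pi$ with infinite multiplicity, in the non-separable version, or by reducing to separable subrepresentations.

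We may assume $f$ is finitely supported. Fix a finitely supported $\xi\in\ell^p(G;\mc H_\sigma)$ with $\|\xi\|\le1$. The quantity $(\wt\sigma_p\rtimes\lambda^p)(f)\xi$ evaluated at $h$ equals $\sum_g\sigma(\alpha_{h^{-1}}(f(g)))\xi(g^{-1}h)$. Only the finitely many elements $\alpha_{h^{-1}}(f(g))$, for $h\in\supp f\cdot\supp\xi$ and $g\in\supp f$, enter, together with the finitely many vectors $\xi(g^{-1}h)$. Choose $V$ as above for these elements and vectors with tolerance $\varepsilon$, and set $\eta(h)=V\xi(h)$. Then $\|\eta\|_p=\|\xi\|_p$, since $V$ is isometric. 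Moreover, $(\wt\pi_p\rtimes\lambda^p)(f)\eta$ differs from $V[(\wt\sigma_p\rtimes\lambda^p)(f)\xi]$ pointwise by at most $\varepsilon\cdot|\supp f|$, on a finite set of $h$. Hence
\[
\|(\wt\sigma_p\rtimes\lambda^p)(f)\xi\|_p\le\|(\wt\pi_p\rtimes\lambda^p)(f)\|+\varepsilon\, c(f,\xi).
\]
Letting $\varepsilon\to0$ and taking the supremum over $\xi$ gives (i).

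The main obstacle is invoking Voiculescu's theorem correctly in possibly non-separable and non-unital settings. Since only finitely many vectors of $\mc H_\sigma$ are involved, I would restrict $\sigma$ to the separable invariant subspace $\overline{\sigma(A_0)\{\xi(h)\}}$, where $A_0$ is the separable $G$-invariant C*-subalgebra generated by the coefficients. I would likewise cut $\pi$ to an infinite-multiplicity separable subrepresentation that is faithful on $A_0$. Here the hypothesis $\pi\simeq\aleph_0\cdot\pi$ ensures such subrepresentations exist. Degenerate or zero parts of $\sigma|_{A_0}$ need care; the non-degeneracy of $\sigma$ means vectors are approximated by $\sigma(A)\mc H_\sigma$, which suffices. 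The resulting approximate unitary equivalence $\sigma_0\oplus\pi_0\sim_a\pi_0$ then supplies the required almost-intertwining isometry $V$.
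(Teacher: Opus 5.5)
Your proposal is correct and is essentially the paper's proof. Part (ii) is obtained exactly as in the paper, from (i) applied to $\pi\otimes I_{\ell^2}$ together with Lemma~\ref{lemma1}. For (i) the paper follows the same steps you outline:
\begin{itemize}
\item it reduces to a separable subalgebra $B$;
\item it passes to a separable subrepresentation $\sigma_0$, made non-degenerate by adjoining elements $b_{g,r}$ with $\sigma(b_{g,r})\xi(g)\to\xi(g)$;
\item it passes to a separable subrepresentation $\pi_0$ that is faithful with $\pi_0(B)\cap\mathcal{K}=\{0\}$, using orthogonal copies coming from $\pi\simeq\aleph_0\cdot\pi$;
\item it applies Voiculescu's theorem $\pi_0\sim_a\pi_0\oplus\sigma_0$.
\end{itemize}
Your almost-intertwining isometry is just $v\mapsto U_n^*(0,v)$, and your explicit $\varepsilon$-estimate replaces the paper's Fatou-lemma limit.

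The only slip is requiring $A_0$ to be $G$-invariant. For uncountable $G$ such an algebra need not be separable, but invariance is never used. Take $A_0$ to be generated by the finitely many $\alpha_{h^{-1}}(f(g))$ together with the approximate-unit elements; the paper only uses invariance under the countable subgroup $G_0$ generated by the supports.
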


\begin{proof}
Assume first that $\aleph_0\cdot \pi\simeq \pi$. Since $\wt{\sigma}_p\rtimes \lambda^p, \wt{\pi}_p\rtimes \lambda^p$ are contractive, it is enough to show \eqref{eq7} for a fixed finitely supported $f\colon G\rightarrow A$. Next, choose $\xi\in \ell^p(G;\mc{H}_\sigma)$ of norm $\le 1$ with finite support. To avoid trivialities, assume $f\neq 0, \xi\neq 0$. It is enough to show that
\begin{equation}\label{eq6}
\| (\wt{\sigma}_p\rtimes \lambda^p)(f) \xi\|_{\ell^p(G;\mc{H}_\sigma)}\le
\|(\wt{\pi}_p\rtimes\lambda^p)(f)\|_{\mc{B}(\ell^p(G;\mc{H}_\pi))}.
\end{equation}
We want to use Voiculescu's theorem on approximate unitary equivalence of representations, hence we need to make several preparations and reduce to a separable situation.

Let $G_0$ be the (countable) subgroup of $G$ generated by $\supp(f)\cup\supp(\xi)$. Since $\sigma$ is non-degenerate, there is $b_{g,r}\in A$ such that $\|\sigma(b_{g,r})\xi(g) - \xi(g)\|\le \tfrac{1}{r}$ for $r\in\N, g\in G_0$. Define a C$^*$-algebra 
\[ 
B=C^*(\{\alpha_{h}(f(g)),\alpha_{h}(b_{g,r})\mid g,h \in G_0,r\in\N\})\subseteq A.
\]

Observe that $B$ is $G_0$-invariant and separable; choose a countable dense subset $\{a_k\}_{k\in\N}\!\subseteq B\setminus\{0\}$. Next, define a separable, closed subspace of $\mc{H}_\sigma$:
\[\mc{H}_{\sigma,0}=\ov{\lin}\{  \sigma(a_k)\xi(g)\mid g\in G_0, k\in\N\}.\]
Clearly $\mc{H}_{\sigma,0}$ is $\sigma(B)$-invariant, hence we can consider the (co-)restricted representation $\sigma_0\colon B\rightarrow \mc{B}(\mc{H}_{\sigma,0})$.

 As $\pi$ is faithful and $\aleph_0\cdot \pi\simeq \pi$, there are unit vectors $\{\zeta_{k,m}^i\}_{i,k,m\in\N}\subseteq \mc{H}_\pi$ such that $\|\pi(a_k)\zeta^i_{k,m}\| \ge \|a_k\|-\tfrac{1}{m}$ and $\la \pi(b)\zeta^i_{k,m} | \pi(b')\zeta^{i'}_{k',m'}\ra=0$, for any $b,b'\in A,\, i,k,m,k',m',i'\in \N$ such that $i\neq i'$. Finally, define a separable, closed subspace
\[\mc{H}_{\pi,0}=\ov{\lin} \{\pi(a_n)\zeta_{k,m}^i\mid i,n,k,m\in \N\}\subseteq \mc{H}_{\pi}.
\]
Then $\mc{H}_{\pi,0}$ is a $\pi(B)$-invariant subspace. Let $\pi_0$ be the (co-)restriction $\pi_0\colon B\rightarrow \mc{B}(\mc{H}_{\pi,0})$.\\

Representation $\pi_0$ is faithful. Indeed, for $a\in B$ we can find indices $n_j$ such that $a_{n_j}\xrightarrow[j\to\infty]{} a$. Then if $(c_k)_{k\in\N}$ is an approximate unit in $B$, choose $m_k\in\N$ such that $\|a_{m_k} - c_k\|\le\tfrac{1}{k}$ with $\|c_k\|,\|a_{m_k}\|\le 1$. Then for any $j\in \N$ we have
\begin{align*}
 \|\pi_0(a_{n_j})\|&\ge \liminf_{k\to\infty}
 \|\pi_0(a_{n_j}) \pi(a_{m_k})\zeta^1_{n_j,k}\|\ge
 \liminf_{k\to\infty} \bigl(
  \|\pi(a_{n_j} c_k)\zeta^1_{n_j,k}\|-\|a_{n_j}\|\tfrac{1}{k}
  \bigr)\\
  &\ge
   \liminf_{k\to\infty} \bigl(
  \|\pi(a_{n_j} )\zeta^1_{n_j,k}\|-
\|a_{n_j}-a_{n_j}c_k\|-
  \|a_{n_j}\|\tfrac{1}{k}
  \bigr) \ge \|a_{n_j}\|
\end{align*}
 and we obtain $\|\pi_0(a_{n_j})\|=\|a_{n_j}\|$. Finally
\[
\|a\|\ge \|\pi_0(a)\|=
\lim_{j\to\infty} \|\pi_0(a_{n_j})\|
=\lim_{j\to\infty} \|a_{n_j}\|=\|a\|.
\]
Clearly both $\pi_0,\sigma_0$ are non-degenerate. Since $b_{g,r}\in B$, we have $\xi\colon G\rightarrow \mc{H}_{\sigma,0}$.

Finally, we claim that $\pi_0(B)\cap \mc{K}(\mc{H}_{\pi,0})=\{0\}$. Indeed, suppose otherwise. Then there is $a\in B, \|a\|=1$ such that $\pi_0(a)$ is of finite rank. Then we can choose index $n\in \N$ so that $\|a-a_n\|\le \tfrac{1}{4}$, hence $\|a_n\|\ge \tfrac{3}{4}$. For any $i\in \N$ we have $\|\pi_0(a_n)\zeta^{i}_{n,4}\| \ge  \tfrac{1}{2}$, hence $\tfrac{1}{2} \le \|\pi_0(a_n)\zeta^{i}_{n,4}\| \le \|\pi_0(a)\zeta^i_{n,4}\|+ \tfrac{1}{4}$ and $\pi_0(a)\zeta^{i}_{n,4}\neq 0$. Since these vectors are pairwise orthogonals for different $i$'s, the operator $\pi_0(a)\in \mc{B}(\mc{H}_{\pi,0})$ cannot be of finite rank and we obtain a contradiction.\\

Consider the C$^*$-algebra $\pi_0(B)\subseteq \mc{B}(\mc{H}_{\pi,0})$. Since $\pi_0$ is faithful, we can consider a non-degenerate $*$-homomorphism $\sigma'\colon \pi_0(B)\ni \pi_0(a)\mapsto \sigma_0(a)\in \mc{B}(\mc{H}_{\sigma,0})$. Voiculescu's theorem \cite[Corollary II.5.5]{Davidson} gives us an approximate unitary equivalence $\mathrm{id}\approx_a \mathrm{id} \oplus \sigma'$ of representations of $\pi_0(B)$. This means that there is a sequence of unitary operators $U_n\colon \mc{H}_{\pi,0}\rightarrow \mc{H}_{\pi,0}\oplus \mc{H}_{\sigma,0}$ $(n\in\N)$ such that
\[
\lim_{n\to\infty} \| U_n \pi_0(a) U_n^* - \pi_0(a)\oplus \sigma_0(a)\|_{\mc{B}(\mc{H}_{\pi,0}\oplus\mc{H}_{\sigma,0})}=0\qquad(a\in B).
\]

With these preliminaries, we can start calculating the left hand side of \eqref{eq6}:
\begin{align*}
\|(\wt{\sigma}_p\rtimes \lambda^p)&(f)\xi\|^p_{\ell^p(G;\mc{H}_\sigma)}=
\sum_{h\in G_0}\bigl\|
\sum_{g\in G_0}\sigma (\alpha_{h^{-1}}(f(g)))\xi(g^{-1}h)\bigr\|_{\mc{H}_\sigma}^p\\
&=
\sum_{h\in G_0}\bigl\|
\sum_{g\in G_0}\bigl(0,\sigma_0 (\alpha_{h^{-1}}(f(g)))\xi(g^{-1}h) \bigr)\bigr\|_{\mc{H}_{\pi,0}\oplus \mc{H}_{\sigma,0}}^p\\
&=
\sum_{h\in G_0}\bigl\|
\sum_{g\in G_0}
\bigl(\pi_0(\alpha_{h^{-1}}(f(g)))\oplus \sigma_0(\alpha_{h^{-1}}(f(g))) \bigr)\,
\bigl(0,\xi(g^{-1}h) \bigr)\bigr\|_{\mc{H}_{\pi,0}\oplus \mc{H}_{\sigma,0}}^p\\
&=
\sum_{h\in G_0}\bigl\|
\sum_{g\in G_0}
\lim_{n\to\infty} U_n
\pi_0(\alpha_{h^{-1}}(f(g))) U_n^*
\bigl(0,\xi(g^{-1}h) \bigr)\bigr\|_{\mc{H}_{\pi,0}\oplus \mc{H}_{\sigma,0}}^p\\
&=
\sum_{h\in G_0}\lim_{n\to\infty} \bigl\|
U_n \sum_{g\in G_0}
\pi_0(\alpha_{h^{-1}}(f(g))) U_n^*
\bigl(0,\xi(g^{-1}h) \bigr)\bigr\|_{\mc{H}_{\pi,0}\oplus \mc{H}_{\sigma,0}}^p\\
&\le
\liminf_{n\to\infty}
\sum_{h\in G_0} \bigl\|
 \sum_{g\in G_0}
\pi_0(\alpha_{h^{-1}}(f(g))) U_n^*
\bigl(0,\xi(g^{-1}h) \bigr)\bigr\|_{\mc{H}_{\pi,0}}^p.
\end{align*}
Note that the sum over $g$ is finite and we use Fatou lemma in the last step. Now, define
\[
\eta_n\colon G\ni h\mapsto U_n^*(0,\xi(h))\in \mc{H}_{\pi,0}\subseteq \mc{H}_\pi.
\]
Clearly $\|\eta_n\|_{\ell^p(G;\mc{H}_{\pi,0})}\le 1$. We can continue the computation
\begin{align*}
\|(\wt{\sigma}_p\rtimes\lambda^p)(f)\xi\|^p_{\ell^p(G;\mc{H}_\sigma)}&\le
\liminf_{n\to\infty} \sum_{h\in G_0}\bigl\|
\sum_{g\in G_0}\pi_0(\alpha_{h^{-1}}(f(g))) \eta_n(g^{-1} h)\bigr\|^p_{\mc{H}_{\pi,0}}\\
&=
\liminf_{n\to\infty} \sum_{h\in G}\bigl\|
\sum_{g\in G}(\wt{\pi}_p(f(g)) \lambda^p(g)\eta_n)( h)\bigr\|^p_{\mc{H}_\pi}\\
&=
\liminf_{n\to\infty} \sum_{h\in G}\bigl\|
\bigl(
(\wt{\pi}_p\rtimes\lambda^p)(f) \eta_n \bigr)(h)\bigr\|^p_{\mc{H}_\pi}\\
&=
\liminf_{n\to\infty}
\|(\wt{\pi}_p\rtimes\lambda^p)(f)\eta_n\|_{\ell^p(G;\mc{H}_\pi)}^p\le
\|(\wt{\pi}_p\rtimes\lambda^p)(f)\|^p_{\mc{B}(\ell^p(G;\mc{H}_\pi))}.
\end{align*}
This establishes \eqref{eq6} and ends the proof of \eqref{eq7}.
\smallskip
The inequality \eqref{eq8} follows by considering amplification. Consider the representation $\gamma=\pi(\cdot)\otimes I_{\ell^2}\colon A\rightarrow \mc{B}(\mc{H}\otimes \ell^2)$. It is faithful, non-degenerate and $\gamma\simeq \aleph_0\cdot \gamma$. Using the inequality \eqref{eq7} and Lemma \ref{lemma1} we obtain
\[
\| (\wt{\sigma}_p\rtimes \lambda^p)(f) \|_{\mc{B}(\ell^p(G;\mc{H}_\sigma))}\le
\|(\wt{\gamma}_p\rtimes\lambda^p)(f)\|_{\mc{B}(\ell^p(G;\mc{H}_\gamma))}\le
C_p \|(\wt{\pi}_p\rtimes\lambda^p)(f)\|_{\mc{B}(\ell^p(G;\mc{H}_\pi))}
\]
for $f\in \ell^1(G;A)$, as originally claimed.
\end{proof}

\begin{cor}\label{cor:indepBanach}
Take $1<p<\infty$ and let $\pi,\sigma$ be faithful representations of a $G$-C*-algebra $A$. There exists $D_p\ge 1$ (depending only on $p$) with the following properties:
 \begin{itemize}
 \item[(i)] the identity map on $\ell^1(G;A)$ extends to an isomorphism of Banach $*$-algebras $\PF{p,\pi}{G}{A} \rightarrow \PF{p,\sigma}{G}{A}$ with the norm bounded by $D_p$. If $\aleph_0\cdot \pi\simeq \pi$ and $\aleph_0\cdot\sigma\simeq \sigma$, then this isomorphism is isometric.
 \item[(ii)] The identity map on $\ell^1(G;A)$ extends to an isomorphism of Banach $*$-algebras $\PF{p,\pi}{G}{A}\rightarrow \PF{p}{G}{A}$ with the norm bounded by $D_p$.
\end{itemize}
\end{cor}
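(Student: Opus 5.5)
The plan is to apply Proposition \ref{prop1} twice, once for the exponent $p$ and once for the conjugate exponent $q$, and to set $D_p=\max\{C_p,C_q\}$. Here $C_p$ and $C_q$ are the constants from Proposition \ref{prop1}(ii), which depend only on $p$ (since $q$ is determined by $p$). Both the $\ell^p$- and the $\ell^q$-operator norms enter the definition of $\|\cdot\|_{\pi,p}$, so we need the bounds in both exponents.

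\textbf{Step 1 (comparison of norms on $\ell^1(G;A)$).} Let $\pi,\sigma$ be faithful. We may pass to essential parts using the earlier proposition, so assume both are non-degenerate; the essential part of a faithful representation is still faithful. Then Proposition \ref{prop1}(ii), applied for the exponents $p$ and $q$ (both lie in $(1,\infty)$), gives
\[
\|\wt{\sigma}\rtimes\lambda^p(f)\|\le C_p\|\wt{\pi}\rtimes\lambda^p(f)\|,\qquad
\|\wt{\sigma}\rtimes\lambda^q(f)\|\le C_q\|\wt{\pi}\rtimes\lambda^q(f)\|.
\]
Taking maxima yields $\|f\|_{\sigma,p}\le D_p\|f\|_{\pi,p}$, and by symmetry $\|f\|_{\pi,p}\le D_p\|f\|_{\sigma,p}$. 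So the two norms are equivalent on $\ell^1(G;A)$.

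\textbf{Step 2 (extension to the completions).} The identity map then extends to a bounded bijection between the completions with bounded inverse. It is multiplicative and $*$-preserving because it is so on the dense subalgebra $\ell^1(G;A)$ and both operations are continuous. The involution is isometric for $\|\cdot\|_{\pi,p}$ by the remark preceding the definition, which swaps the $p$- and $q$-norms under $h\mapsto h^*$; this is exactly why the norm is defined as a maximum.

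\textbf{Step 3 (isometric case).} If $\aleph_0\cdot\pi\simeq\pi$ and $\aleph_0\cdot\sigma\simeq\sigma$, Proposition \ref{prop1}(i) applies in both directions and for both exponents, with constant $1$. Hence the two norms coincide and the extension is isometric.

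\textbf{Step 4 (part (ii)).} For any $*$-representation $\rho$, the previous proposition lets us replace $\rho$ by its essential part $\rho'$, which is non-degenerate. Proposition \ref{prop1}(ii) still applies to the pair $\pi$, $\rho'$, since only $\pi$ needs to be faithful. This gives $\|f\|_{\rho,p}\le D_p\|f\|_{\pi,p}$, so taking the supremum over $\rho$ gives $\|f\|_{\pi,p}\le\|f\|_{\PF{p}{G}{A}}\le D_p\|f\|_{\pi,p}$. The lower bound holds because $\pi$ is one of the representations in the supremum. The extension argument of Step 2 then yields the isomorphism $\PF{p,\pi}{G}{A}\to\PF{p}{G}{A}$.

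\textbf{Main obstacle.} There is no real obstacle beyond bookkeeping. The point that needs care is that Proposition \ref{prop1} is stated for non-degenerate representations, so the reduction to essential parts must be made explicitly and must preserve faithfulness where it is required.
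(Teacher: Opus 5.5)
Your proposal is correct and takes the same approach as the paper, whose proof is the one-line remark that both claims follow from Proposition \ref{prop1} applied for $p$ and for $q$, with $D_p=\max(C_p,C_q)$; your Steps 1--4 spell out that bookkeeping. The reduction to essential parts, which the paper leaves implicit, is sound, though Step 3 should add the one-line check that $\aleph_0\cdot\pi\simeq\pi$ passes to the essential part of $\pi$: a unitary implementing the equivalence maps the essential subspace onto the essential subspace.
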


\begin{proof}
The two claims follow immediately from Proposition \ref{prop1}, with $D_p=\max(C_p,C_q)$, where $q$ is the H{\"o}lder conjugate of $p$. 
\end{proof}

\subsection{Further properties of the algebras $\PF{p}{G}{A}$}

We now investigate certain spectral and hereditary properties of Banach algebras $\textup{PF}^*_p(G;A)$. We start with the following result, which is a consequence of the Riesz-Thorin Theorem.

\begin{prop}\label{CP:cov:interpolation}
	Let $p_0, p_1 \in (1,\infty)$, $p_0< p_1$, $\theta \in (0,1)$, let $A$ be a $G$-C*-algebra and let
	$\pi\colon A \to \mathcal{B}(\mathcal{H})$ be a $*$-representation of $A$ on a
	Hilbert space $\mathcal{H}$. Let
	$p_\theta \in (p_0,p_1)$ be such that $1/p_\theta = (1-\theta)/p_0 + \theta/p_1$.
	Then for every $f\in \ell^1(G;A)$ we have
	\begin{align*}
		\| \widetilde{\pi}\rtimes \lambda^{p_\theta}(f)\|
		\leq \|\widetilde{\pi} \rtimes \lambda^{p_0}(f)\|^{1-\theta}
		\| \widetilde{\pi}\rtimes \lambda^{p_1}
		(f)\|^\theta
	\end{align*}
\end{prop}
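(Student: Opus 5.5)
The plan is to apply the Riesz--Thorin interpolation theorem for vector-valued $\ell^p$-spaces to the single linear operator $T=\widetilde{\pi}\rtimes \lambda(f)$. The point is that $T$ is given by one and the same formula on every $\ell^p(G;\mathcal{H})$:
\[
(Tx)(h)=\sum_{g\in G}\pi\bigl(\alpha_{h^{-1}}(f(g))\bigr)x(g^{-1}h).
\]
By contractivity of $f\mapsto \widetilde{\pi}\rtimes\lambda^{p}(f)$ with respect to $\|\cdot\|_1$ for each fixed $p$, together with the fact that both sides of the claimed inequality are continuous in $f$ for the $\ell^1$-norm, I will first reduce to finitely supported $f$. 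On the space of finitely supported functions $G\to\mathcal{H}$, which is dense in each $\ell^p(G;\mathcal{H})$, the operators $\widetilde{\pi}\rtimes\lambda^{p_0}(f)$ and $\widetilde{\pi}\rtimes\lambda^{p_1}(f)$ then agree with $T$, and they map this space into itself.

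The interpolation statement I need is that $\bigl(\ell^{p_0}(G;\mathcal{H}),\ell^{p_1}(G;\mathcal{H})\bigr)_{[\theta]}=\ell^{p_\theta}(G;\mathcal{H})$ isometrically, with the Riesz--Thorin bound $\|T\|_{p_\theta}\le\|T\|_{p_0}^{1-\theta}\|T\|_{p_1}^{\theta}$. This is the classical complex interpolation of Bochner $L^p(\mu;X)$ spaces (Calderón), here with $\mu$ counting measure and $X=\mathcal{H}$.

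Alternatively, I would prove the bound directly with the three lines lemma, to stay self-contained. Fix finitely supported $x\in\ell^{p_\theta}(G;\mathcal{H})$ and $y\in\ell^{q_\theta}(G;\mathcal{H})$ of norm one. Write $x(h)=\|x(h)\|u(h)$ and $y(h)=\|y(h)\|v(h)$ with unit vectors $u(h),v(h)$, and set
\[
x_z(h)=\|x(h)\|^{p_\theta\left(\frac{1-z}{p_0}+\frac{z}{p_1}\right)}u(h),\qquad y_z(h)=\|y(h)\|^{q_\theta\left(\frac{1-z}{q_0}+\frac{z}{q_1}\right)}v(h).
\]
Then $F(z)=\langle Tx_z,y_z\rangle$ is a finite sum of exponentials in $z$, hence bounded and analytic on the strip $0\le\operatorname{Re}z\le1$. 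On the line $\operatorname{Re}z=j$ we have $\|x_z\|_{p_j}=1$ and $\|y_z\|_{q_j}=1$, so $|F|\le\|T\|_{p_j}$ there, using the duality pairing from the Remark before Definition of $\|\cdot\|_{\pi,p}$. At $z=\theta$ we get $x_\theta=x$ and $y_\theta=y$. Hadamard's three lines lemma then gives $|\langle Tx,y\rangle|\le\|T\|_{p_0}^{1-\theta}\|T\|_{p_1}^{\theta}$. Taking the supremum over such $x,y$, via the norming identity $\|f\|_p=\sup|\langle f,g\rangle|$ already used in the paper, yields the claim.

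The main obstacle is really only bookkeeping. The Hilbert-valued setting requires the polar decomposition to act fiberwise through the unit vectors $u(h)$, rather than through a complex phase as in the scalar case. One must also check that the $\ell^{p_j}$ norms of $x_z$ depend only on $\operatorname{Re}z$; this holds because the modulus of $\|x(h)\|^{p_\theta(\cdot)}$ depends only on $\operatorname{Re}z$. Finally, the case $\|T\|_{p_j}=0$ and the extension from finitely supported $x,y$ to the whole space are handled by density.
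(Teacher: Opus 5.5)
Your main route is the paper's: you apply the Riesz--Thorin theorem for $\mathcal{H}$-valued $\ell^p$-spaces to the single operator given by the common formula, so the proposal is correct. The paper encodes the consistency of that operator through a map $Z$ on $\ell^{p_0}(G;\mathcal{H})+\ell^{p_1}(G;\mathcal{H})$, where you use finitely supported vectors. One repair in your optional three-lines argument: the pairing is conjugate-linear in one slot, so $z\mapsto\langle Tx_z,y_z\rangle$ is not holomorphic as written. Take instead $F(z)=\langle Tx_{\bar z},y_z\rangle$ (with the paper's convention of linearity on the right). It satisfies the same bounds $|F|\le\|T\|_{p_j}$ on $\mathrm{Re}\,z=j$, and still $F(\theta)=\langle Tx,y\rangle$.
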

\begin{proof}
	Let $f\in \ell^1(G;A)$. For every $g\in \ell^{p_0}(G;\mathcal{H})
	\cap \ell^{p_1}(G;\mathcal{H})$ the equality
	$\widetilde{\pi}\rtimes \lambda^{p_0}(f)g
	= \widetilde{\pi}\rtimes \lambda^{p_1}(f)g$ holds. Therefore
	\begin{align*}
		Z\colon \ell^{p_0}(G;\mathcal{H})+ \ell^{p_1}(G;\mathcal{H}) \to \ell^0(G;
		\mathcal{H}),\quad g_0+g_1 \mapsto \widetilde{\pi}\rtimes \lambda^{p_0}(f)g_0
		+ \widetilde{\pi}\rtimes \lambda^{p_1}(f)g_1
	\end{align*}
	is a well-defined linear map. By the Riesz-Thorin Theorem, we obtain that
	\begin{align*}
		\|\widetilde{\pi}\rtimes \lambda^{p_\theta}(f) g\|
		\leq \| \widetilde{\pi}\rtimes \lambda^{p_0}(f)\|^{1-\theta}
		\| \widetilde{\pi}\rtimes \lambda^{p_1}(f)\|^\theta \| g\|_{{p_\theta}}
	\end{align*}
	for all $g\in \ell^{p_\theta}(G;\mathcal{H})$. In particular,
	\begin{align}
		\|\widetilde{\pi}\rtimes \lambda^{p_\theta}(f)\| \leq
		\| \widetilde{\pi}\rtimes \lambda^{p_0}(f)\|^{1-\theta} \|\widetilde{\pi}
		\rtimes \lambda^{p_1}(f)\|^\theta
	\end{align}
	holds.
\end{proof}

\begin{prop}\label{CP:cov:interpolationPseodo}
	Let $p_0, p_1 \in (1,\infty)$, $p_0 < p_1$ and $ p \in (p_0,p_1)$. Let
	$\theta \in (0,1)$ be such that $1/p = (1-\theta)/p_0 + \theta/p_1$ and let $A$ be a $G$-C*-algebra. Then
	\begin{align*}
		\|f \|_{\textup{PF}^*_p(G;A)}\leq \| f \|_{\textup{PF}^*_{p_0}(G;A)}^{1-\theta}\|f\|_{\textup{PF}_{p_1}^*(G;A)}^\theta
	\end{align*}
	for all $f\in \ell^1(G;A)$. %In particular, if $1<p_0<p\leq 2$, then
    %\begin{align*}
	%	\|f \|_{\textup{PF}^*_{p}(G;A)}\leq \| f \|_{\textup{PF}^*_{p_0}(G;A)}
	%\end{align*}
    %	for all $f\in \ell^1(G;A)$.
\end{prop}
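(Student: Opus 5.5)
The plan is to apply Proposition \ref{CP:cov:interpolation} to each representation separately, and to each of the two exponents appearing in the $\|\cdot\|_{\pi,p}$ norm, and only then take suprema. Fix $f\in\ell^1(G;A)$ and a $*$-representation $\pi$ of $A$. Write $q,q_0,q_1$ for the conjugate exponents of $p,p_0,p_1$.

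The first step concerns the exponent $p$ itself. Proposition \ref{CP:cov:interpolation} gives
\[
\|\widetilde{\pi}\rtimes\lambda^{p}(f)\|\le \|\widetilde{\pi}\rtimes\lambda^{p_0}(f)\|^{1-\theta}\|\widetilde{\pi}\rtimes\lambda^{p_1}(f)\|^{\theta}.
\]
The right-hand side is bounded by $\|f\|_{\pi,p_0}^{1-\theta}\|f\|_{\pi,p_1}^{\theta}$.

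The second step treats the conjugate exponent $q$. Taking conjugates of $1/p=(1-\theta)/p_0+\theta/p_1$ gives $1/q=(1-\theta)/q_0+\theta/q_1$, with the same $\theta$. Since $p_0<p<p_1$, we have $q_1<q<q_0$, so the roles of the endpoints are swapped. After relabelling, Proposition \ref{CP:cov:interpolation} applies with the interpolation parameter $1-\theta$ between $q_1$ and $q_0$. It yields
\[
\|\widetilde{\pi}\rtimes\lambda^{q}(f)\|\le \|\widetilde{\pi}\rtimes\lambda^{q_0}(f)\|^{1-\theta}\|\widetilde{\pi}\rtimes\lambda^{q_1}(f)\|^{\theta}\le \|f\|_{\pi,p_0}^{1-\theta}\|f\|_{\pi,p_1}^{\theta}.
\]
Taking the maximum of the two estimates gives
\[
\|f\|_{\pi,p}\le\|f\|_{\pi,p_0}^{1-\theta}\|f\|_{\pi,p_1}^{\theta}.
\]

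The final step takes the supremum over $\pi$. We bound $\|f\|_{\pi,p_i}\le\|f\|_{\textup{PF}^*_{p_i}(G;A)}$ on the right-hand side. The right-hand side is then independent of $\pi$, and taking the supremum on the left gives the claim.

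The only point needing care is the second step. One must check that the conjugate exponents satisfy the interpolation relation with the same $\theta$, and that the ordering $q_1<q_0$ is handled by swapping $\theta$ with $1-\theta$. Both are routine.
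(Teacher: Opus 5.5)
Your proposal is correct and matches the paper's proof: apply Proposition \ref{CP:cov:interpolation} for a fixed representation to both $p$ and its conjugate $q$, combine the two bounds into $\|f\|_{\pi,p}\le\|f\|_{\pi,p_0}^{1-\theta}\|f\|_{\pi,p_1}^{\theta}$, and then take the supremum over $\pi$. Your explicit handling of the reversed ordering $q_1<q_0$, by swapping $\theta$ and $1-\theta$, is a detail the paper leaves implicit.
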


\begin{proof}
	Let $\pi\colon A \to \mathcal{B}(\mathcal{H})$ be any non-degenerate $*$-representation of $A$ on a
	Hilbert space $\mathcal{H}$. Let $q_i\in (1,\infty)$ be such that $1/p_i + 1/q_i =1$, for $i \in \{ 0 ,1 \}$ and
	let $q$  be such that $1/q = (1-\theta) /q_0 + \theta /q_1$. Then
	$1/p + 1/q = (1-\theta)(1/p_0 + 1/q_0)+ \theta(1/p_1+1/q_1) = 1$. By
	Proposition \ref{CP:cov:interpolation}, for each  $f\in \ell^1(G;A)$ we have
	\[
		\|\tilde{\pi}\rtimes \lambda^p(f)\| \leq \|\tilde{\pi}\rtimes \lambda^{p_0}(f) \|^{1-\theta}
		 \|\tilde{\pi}\rtimes \lambda^{p_1} (f)\|^{\theta} \] and \[
		\|\tilde{\pi}\rtimes \lambda^q(f)\| \leq \|\tilde{\pi}\rtimes \lambda^{q_0}(f) \|^{1-\theta}
		 \|\tilde{\pi}\rtimes \lambda^{q_1} (f)\|^{\theta}.
\]
Thus
	\begin{align*}
		\| f \|_{\pi,p} \leq \|f\|_{\pi,p_0}^{1-\theta} \|f\|_{\pi,p_1}^{\theta}
	\end{align*}
and finally
	\begin{align*}
		\|f\|_{\textup{PF}^*_p(G;A)}\leq \|f\|_{\textup{PF}^*_{p_0}(G;A)}^{1-\theta}\|f\|_{\textup{PF}^*_{p_1}(G;A)}^\theta.
	\end{align*}
\end{proof}

The last proposition yields, in particular, the following monotonicity result.

\begin{cor} \label{prop:monotone}
  Let $p, p' \in (1,2]$, $p' >p$, and let $A$ be a $G$-C*-algebra. Then, for every $f \in \ell^1(G;A)$, we have
  \[\|f\|_{\textup{PF}^*_{p'}(G;A)} \leq \|f\|_{\textup{PF}^*_{p}(G;A)}.\]
  In particular, the identity map on $\ell^1(G;A)$ extends to a contractive $*$-homomorphism from 
  $\textup{PF}^*_{p}(G;A)$ into $\textup{PF}^*_{p'}(G;A)$.
\end{cor}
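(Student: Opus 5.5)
The plan is to derive the corollary from Proposition \ref{CP:cov:interpolationPseodo}, applied with $p$ sitting between $p_0$ and $p_1$, together with the symmetry $\|f\|_{\textup{PF}^*_r(G;A)} = \|f\|_{\textup{PF}^*_{r'}(G;A)}$ for conjugate exponents $r,r'$ from Remark \ref{rem:contractive}. The point is that the norm at $p'$ will be interpolated between the norm at $p$ and the norm at the conjugate exponent $q = p/(p-1)$, and these two endpoint norms are equal.

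Fix $1<p<p'\le 2$ and let $q\ge 2$ be the conjugate of $p$. If $p'=2$, then $p'$ lies strictly between $p$ and $q$ (as $p<2<q$). If $p'<2$, the same holds since $p<p'<2<q$. In both cases I apply Proposition \ref{CP:cov:interpolationPseodo} with $p_0=p$, $p_1=q$, and $\theta\in(0,1)$ determined by $1/p' = (1-\theta)/p + \theta/q$. This gives
\[
\|f\|_{\textup{PF}^*_{p'}(G;A)} \le \|f\|_{\textup{PF}^*_{p}(G;A)}^{1-\theta}\,\|f\|_{\textup{PF}^*_{q}(G;A)}^{\theta}.
\]
By definition, $\|\cdot\|_{\pi,p} = \|\cdot\|_{\pi,q}$ for every representation $\pi$, since both are the maximum of the $p$- and $q$-operator norms. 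Hence $\|f\|_{\textup{PF}^*_{q}(G;A)} = \|f\|_{\textup{PF}^*_{p}(G;A)}$, and the right-hand side collapses to $\|f\|_{\textup{PF}^*_{p}(G;A)}$. This yields the inequality for all $f\in\ell^1(G;A)$.

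For the second assertion, the inequality shows that the identity on the dense subalgebra $\ell^1(G;A)$ is contractive from the $\textup{PF}^*_p$-norm to the $\textup{PF}^*_{p'}$-norm, so it extends by continuity to a contraction $\textup{PF}^*_p(G;A)\to\textup{PF}^*_{p'}(G;A)$. Multiplicativity and $*$-preservation hold on the dense subalgebra $\ell^1(G;A)$, where both structures are the convolution and involution of $\ell^1(G;A)$. Both operations are continuous in each completion, so they pass to the extension.

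There is no real obstacle here. The only points needing care are checking that $p'$ lies strictly inside $(p,q)$, so that $\theta\in(0,1)$ as Proposition \ref{CP:cov:interpolationPseodo} requires, and noting explicitly that the $q$-norm equals the $p$-norm already at the level of each $\|\cdot\|_{\pi,p}$.
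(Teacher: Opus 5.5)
Your proposal is correct and follows the paper's proof: interpolate at $p'$ between $p$ and its conjugate $q$ via Proposition \ref{CP:cov:interpolationPseodo}, then use $\|f\|_{\textup{PF}^*_{q}(G;A)}=\|f\|_{\textup{PF}^*_{p}(G;A)}$ to collapse the right-hand side. The continuity argument you add for the extension to a contractive $*$-homomorphism is also correct.
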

\begin{proof}
This can be shown as in \cite[Proposition 4.5]{SW1}. Indeed, if we let $q> 2$ be the conjugate of $p$ and choose $\theta\in(0,1)$ with $1/p'=(1-\theta)/p+\theta/q$, then Proposition \ref{CP:cov:interpolationPseodo} shows that
\begin{align*}
   		\|f \|_{\textup{PF}^*_{p'}(G;A)} \leq \| f \|_{\textup{PF}^*_{p}(G;A)}^{1-\theta}\|f\|_{\textup{PF}_{q}^*(G;A)}^\theta=\| f \|_{\textup{PF}^*_{p}(G;A)},
\end{align*}
as $\| f \|_{\textup{PF}^*_{p}(G;A)}=\| f \|_{\textup{PF}^*_{q}(G;A)}$.
%\textcolor{blue}{}
%{\color{red} Do we want to add the details?\textcolor{blue}{Done!}}.
\end{proof}

\begin{prop}\label{cp:cp:nestedAlg}
	Let $A$ be a $G$-C*-algebra and $p\in (1,\infty)$. Then 
    $\textup{PF}^*_p(G;A)$ is a $*$-subalgebra of $A\rtimes_r G$. In particular, $\textup{PF}^*_p(G;A)$ is a semisimple Banach $*$-algebra.
\end{prop}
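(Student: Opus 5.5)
We need to show that the identity on $\ell^1(G;A)$ extends to an injective contractive $*$-homomorphism $\textup{PF}^*_p(G;A)\to A\rtimes_r G$; semisimplicity then follows since $A\rtimes_r G$ is a C$^*$-algebra.

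\emph{Contractivity.} We may assume $p\in(1,2)$ with conjugate $q>2$, since the algebras for $p$ and $q$ coincide (Remark \ref{rem:contractive}). Corollary \ref{prop:monotone} with $p'=2$ gives
\[\|f\|_{A\rtimes_r G}=\|f\|_{\textup{PF}^*_2(G;A)}\le \|f\|_{\textup{PF}^*_p(G;A)}\qquad (f\in\ell^1(G;A)).\]
Here $\textup{PF}^*_2(G;A)=A\rtimes_r G$ isometrically: for $p=2$ and faithful $\pi$, the operator $\widetilde{\pi}\rtimes\lambda^2$ is the regular representation, whose norm does not depend on the faithful $\pi$ chosen. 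If $\pi$ is not faithful, its norm is dominated by that of a faithful representation (e.g.\ $\pi\oplus\pi_u$). So the identity extends to a contractive $*$-homomorphism $j\colon \textup{PF}^*_p(G;A)\to A\rtimes_r G$, where $*$-preservation is inherited from $\ell^1(G;A)$.

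\emph{Injectivity (the main step).} I would use the canonical ``coefficient'' maps. For $s\in G$ define $E_s\colon \ell^1(G;A)\to A$, $f\mapsto f(s)$. On $A\rtimes_r G$ these extend contractively via $E_s(x)=E(x\lambda_s^*)$, with $E$ the faithful conditional expectation, and $x\in A\rtimes_rG$ is zero iff all $E_s(x)=0$. Hence it suffices to show that each $E_s$ is already continuous on $\textup{PF}^*_p(G;A)$ and that $E_s\circ j=E_s$ there; then $j(x)=0$ forces $E_s(x)=0$ for all $s$, and we must still conclude $x=0$.

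To get this, I would bound $\|f(s)\|$ by a matrix coefficient. Take a faithful $\pi$ and unit vectors $\zeta,\zeta'\in\mathcal H$, and set $\xi=\delta_{e}\otimes\zeta\in\ell^p(G;\mathcal H)$ and $\eta=\delta_s\otimes\zeta'\in\ell^q(G;\mathcal H)$. Then
\[\langle (\widetilde\pi\rtimes\lambda^p)(f)\xi,\eta\rangle=\langle\pi(\alpha_{s^{-1}}(f(s)))\zeta,\zeta'\rangle,\]
so $\|f(s)\|\le\|f\|_{\pi,p}$. Thus $E_s$ extends to $\textup{PF}^*_p(G;A)$, and the same pairing defines continuous functionals $x\mapsto\langle (\widetilde\pi\rtimes\lambda^p)(x)\xi,\eta\rangle$ on the completion. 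These agree with the corresponding coefficient functionals after passing through $j$.

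The remaining obstacle is that in $\textup{PF}^*_p(G;A)$ an element $x$ with all $E_s(x)=0$ must vanish. The plan is to realise $\textup{PF}^*_p(G;A)$, up to the isomorphism of Corollary \ref{cor:indepBanach}, as the closure $\textup{PF}^*_{p,\pi}(G;A)$ inside $\mathcal B(\ell^p(G;\mathcal H))\oplus\mathcal B(\ell^q(G;\mathcal H))$. Then $x$ is determined by its action on finitely supported vectors $\delta_t\otimes\zeta$. By equivariance,
\[(\widetilde\pi\rtimes\lambda^p)(x)(\delta_t\otimes\zeta)=\sum_s \delta_{st}\otimes\pi(\alpha_{(st)^{-1}}(E_s(x)))\zeta,\]
a formula that holds for $f\in\ell^1$ and passes to limits coordinatewise because each coordinate is a continuous functional. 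So all $E_s(x)=0$ implies the operator vanishes on a dense set, and hence $x=0$. Thus $j$ is injective, $\textup{PF}^*_p(G;A)$ is identified with a dense $*$-subalgebra of $A\rtimes_r G$, and since the latter is semisimple and $j$ is an injective homomorphism into it, the radical of $\textup{PF}^*_p(G;A)$ maps into that of $A\rtimes_r G$ (elements of the radical have spectral radius zero, and spectra only shrink), giving semisimplicity.
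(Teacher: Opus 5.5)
Your contractivity step and injectivity argument are correct and are essentially the paper's proof. The paper also reduces to $p\in(1,2)$, obtains $j_{A,p}$ from Corollary \ref{prop:monotone}, and proves injectivity by observing that on finitely supported vectors the $\ell^p$-, $\ell^2$- and $\ell^q$-operators agree pointwise on $G$. Hence the limit operators of an element of $\ker j_{A,p}$ vanish on a dense set.

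Your coefficient maps $E_s$ are a tidy way of packaging this pointwise comparison. You also do not need Corollary \ref{cor:indepBanach}: your coordinate computation works for every $\pi$, and $\|x\|_{\textup{PF}^*_p(G;A)}=\sup_\pi\|x\|_{\pi,p}$ on the completion, because $\|f_n\|_{\pi,p}\to\|x\|_{\pi,p}$ uniformly in $\pi$. That is how the paper concludes.

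The one step whose justification fails as written is semisimplicity. That spectra shrink only shows $j$ sends the radical to quasinilpotent elements of $A\rtimes_r G$. A C$^*$-algebra has plenty of nonzero quasinilpotents (nilpotent matrices, for instance), so this does not place $j(\mathrm{rad})$ inside $\mathrm{rad}(A\rtimes_r G)$.

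The fix is to use the involution. If $x\in\mathrm{rad}(\textup{PF}^*_p(G;A))$, then $x^*x$ also lies in the radical, since the radical is an ideal, and is therefore quasinilpotent. So $j(x)^*j(x)$ is a self-adjoint quasinilpotent element of a C$^*$-algebra, hence $0$, because norm equals spectral radius for self-adjoint elements. Thus $j(x)=0$, and $x=0$ by injectivity.

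Separately, $*$-semisimplicity, which is what Section \ref{sec:C*} actually uses, follows immediately from the injective $*$-homomorphism into a C$^*$-algebra.
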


\begin{proof}
By Remark \ref{rem:contractive} we may assume that $p \in (1,2)$.
It then follows from Corollary \ref{prop:monotone} that there is a contractive $*$-homomorphism
	$j_{A,p}\colon \textup{PF}_p^*(G;A) \to A\rtimes_r G$
	such that
	\begin{equation*}
		\xymatrix{
			& \ell^1(G;A) \ar[ld]^-{\iota_{A,p}} \ar[rd]^-{\iota_{A,2}} & \\
			\textup{PF}^*_p(G;A)\ar[rr]^-{j_{A,p}} & & A\rtimes_r G
		}
	\end{equation*}
	commutes, where $\iota_{A,2}\colon \ell^1(G;A)\to A\rtimes_r G$ and $\iota_{A,p}\colon  \ell^1(G;A) \to \textup{PF}_p^*(G;A)$ are the corresponding canonical contractive inclusions (see Remark \ref{rem:contractive}). In particular, $\iota_{A,2}$ and $\iota_{A,p}$ are injective. It remains to show that $j_{A,p}$ is injective to complete the 	proof. Let $x\in \ker j_{A,p}$. Since $\iota_{A,p}$ has dense image and $C_c(G;A)$ is dense in $\ell^1(G;A)$, there is a sequence $(f_n)_{n\in \N}\in C_c(G;A)^\mathbb{N}$ such that $\iota_{A,p}(f_n) \to x$ as $n\to \infty$.	
	It is enough to show that for all $*$-representations $\pi\colon A \to \mathcal{B}(\mathcal{H})$, we have
\begin{equation}\label{eq10}
	\lim_{n\to\infty}\tilde{\pi}\rtimes \lambda^p(f_n)=0\quad\textnormal{and}\quad \lim_{n\to\infty}\tilde{\pi}\rtimes \lambda^q(f_n) =0.
	\end{equation}
So let $\pi\colon A \to \mathcal{B}(\mathcal{H})$ be a $*$-representation of $A$ on a Hilbert space $\mathcal{H}$. Then, for all $g\in C_c(G;\mathcal{H})$, we have $\tilde{\pi}\rtimes \lambda^2(f_n)g \to 0$ in $\ell^2(G;\mathcal{H})$ as $n\to \infty$. Therefore, $\tilde{\pi}\rtimes \lambda^2(f_n)g \to 0$
	pointwise (with respect to elements of $G$) as $n\to \infty$ for all $g\in C_c(G;\mathcal{H})$. Note that we have
	\begin{align}\label{cp:cp:eq:regrep}
		\tilde{\pi}\rtimes \lambda^p(f_n)g = \tilde{\pi}\rtimes \lambda^2(f_n) g
		= \tilde{\pi}\rtimes \lambda^q(f_n) g
	\end{align}
	for all $g\in C_c(G;\mathcal{H})$.
	
	Since $(\iota_{A,p}(f_n))_{n\in\N}$ is a Cauchy sequence in $\PF{p}{G}{A}$ both (norm) limits in \eqref{eq10} exist. In particular, for every  $g\in C_c(G;\mathcal{H})$ the sequences
	$(\tilde{\pi}\rtimes \lambda^p(f_n) g)_{n\in\N}$ and $(\tilde{\pi}\rtimes \lambda^q(f_n)g)_{n\in\N}$ converge in $\ell^p(G;\mathcal{H})$ and, respectively, in $\ell^q(G;\mathcal{H})$. It follows that they converge pointwise. Therefore, by equation (\ref{cp:cp:eq:regrep}) and
	the fact that $\tilde{\pi}\rtimes \lambda^2(f_n) g\to 0$ as $n\to\infty$, we obtain 
\[
\lim_{n\to\infty} 
\tilde{\pi}\rtimes \lambda^p(f_n) g=0,\quad 
\lim_{n\to\infty}\tilde{\pi}\rtimes \lambda^q(f_n) g=0
\]
for all $g\in C_c(G;\mc{H})$. Equation \eqref{eq10} follows.
\end{proof}

Next we have the following observation.

\begin{prop}\label{prop:approximate}
	Let $A$ be a $G$-C*-algebra and let $p\in(1,\infty)$.  The Banach $*$-algebra
	$\textup{PF}_p^*(G;A)$ admits a contractive approximate identity.
\end{prop}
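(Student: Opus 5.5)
The plan is to take a contractive approximate identity $(e_i)_{i\in I}$ of $A$ (positive, $\|e_i\|\le 1$) and consider the elements $u_i:=e_i\delta_e\in \ell^1(G;A)$, i.e.\ the functions supported at the identity with value $e_i$. Since $\|u_i\|_1=\|e_i\|\le 1$ and the inclusion $\iota_{A,p}\colon \ell^1(G;A)\to \PF{p}{G}{A}$ is contractive (Remark \ref{rem:contractive}), the images $\iota_{A,p}(u_i)$ are contractive in $\PF{p}{G}{A}$. It then suffices to show that $u_i x\to x$ and $x u_i\to x$ in $\PF{p}{G}{A}$ for all $x$.

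By density of $C_c(G;A)$ in $\PF{p}{G}{A}$ and the uniform bound $\|u_i\|\le 1$, a standard $\varepsilon/3$ argument reduces this to finitely supported $f$. For such $f$ we compute directly in the crossed product structure: $(u_i * f)(s)=e_i f(s)$ and $(f*u_i)(s)=f(s)\alpha_s(e_i)$. Hence
\[
\|u_i*f-f\|_{\PF{p}{G}{A}}\le \|u_i*f-f\|_1=\sum_{s\in \supp f}\|e_if(s)-f(s)\|\to 0,
\]
and similarly
\[
\|f*u_i-f\|_1=\sum_{s\in\supp f}\|f(s)\alpha_s(e_i)-f(s)\|=\sum_{s\in \supp f}\|\alpha_{s^{-1}}(f(s))e_i-\alpha_{s^{-1}}(f(s))\|\to 0,
\]
since $\alpha_{s^{-1}}$ is isometric and $(e_i)$ is an approximate identity of $A$. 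Both estimates again use only that the $\PF{p}{G}{A}$-norm is dominated by the $\ell^1$-norm, which we already have.

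The main (mild) obstacle is the density reduction: we must make sure that the bound $\|u_i\|_{\PF{p}{G}{A}}\le 1$ controls both $\|u_i x-u_i f\|$ and $\|xu_i-fu_i\|$. This holds since $\PF{p}{G}{A}$ is a Banach algebra: its norm is a supremum of operator norms, so it is submultiplicative. The remaining verifications, namely the convolution formulas for $u_i*f$ and $f*u_i$, are routine.
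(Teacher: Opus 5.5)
Your proposal is correct and follows the paper's proof: push a contractive approximate identity of $\ell^1(G;A)$ through the contractive, dense-range map $\iota_{A,p}$ and conclude with an $\varepsilon/3$ density argument. The only difference is that you construct that approximate identity explicitly as $e_i\delta_e$ and check both one-sided limits, whereas the paper simply cites its existence.
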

\begin{proof}
The contractive inclusion
	$\iota_{A,p}\colon \ell^1(G;A) \to \textup{PF}_p^*(G;A)$ has dense image. The Banach $*$-algebra
	$\ell^1(G;A)$ admits a contractive approximate identity $(u_\lambda)_{\lambda\in \Lambda}
 $. Hence $(\iota_{A,p}(u_\lambda))_{\lambda\in \Lambda}$
	is a contractive net. So let $x\in \textup{PF}_p^*(G;A)$ and let
	$\varepsilon>0$. Then there is $f\in \ell^1(G;A)$ such that
	$\| x - \iota_{A,p}(f)\|< \varepsilon$. Since $(u_\lambda)_{\lambda\in \Lambda}$
	is an approximate identity in $\ell^1(G;A)$, there is a $\lambda_0 \in \Lambda$
	such that $\| u_\lambda f - f\|_1 <\varepsilon$ for all $\lambda \geq \lambda_0$. So
	we obtain (using the facts that both $(\iota_{A,p}(u_\lambda))_{\lambda\in \Lambda}$ and $\iota_{A,p}$
	are contractive)
	\begin{align*}
		\| \iota_{A,p}(u_\lambda) x-x \| &\leq \|\iota_{A,p}(u_\lambda)x - \iota_{A,p}(u_\lambda)
		\iota_{A,p}(f) \|+ \| \iota_{A,p}(u_\lambda f) - \iota_{A,p}(f)\|
		+\| \iota_{A,p}(f)- x \|\\
		&\leq\|x - \iota_{A,p}(f)\| + \|u_\lambda f -f \|_1 + \|\iota_{A,p}(f) -x\| \leq 3\varepsilon
	\end{align*}
	for all $\lambda \geq \lambda_0$. This shows that $(\iota_{A,p}(u_\lambda))_{\lambda\in \Lambda}$
	is a contractive left approximate identity in $\textup{PF}_p^*(G;A)$. Analogously it can be shown that
	$(\iota_{A,p}(u_\lambda))_{\lambda\in \Lambda}$ is a right approximate identity. This completes the
	proof.
\end{proof}

We shall now discuss certain functorial properties of our construction.

\begin{prop}\label{P:vector value extension of homomorphism}
	Let $A$ and $B$ be $G$-C*-algebras, let $p\in (1,\infty)$ and let $\varphi\colon A \to B$ be a $G$-equivariant
	$*$-homomorphism. Recall the maps $\iota_{A,p}$ defined in Remark \ref{rem:contractive} and $j_{A,p}$ introduced in the proof of Proposition \ref{cp:cp:nestedAlg}. Then there is a unique contractive $*$-homomorphism
	$$\varphi \rtimes_{\textup{PF}_p} G\colon  \textup{PF}_p^*(G;A)\to \textup{PF}_p^*(G;B)$$ such that
	\begin{equation}
		\xymatrix{
			\ell^1(G; A) \ar[d]^-{\iota_{A,p}} \ar[r]^-{\varphi\rtimes_1 G}&
			\ell^1(G;B)\ar[d]^-{\iota_{B,p}}\\
			\PF{p}{G}{A} \ar[r]^-{\varphi \cpf} \ar[d]^-{j_{A,p}}
			&\PF{p}{G}{B}\ar[d]^-{j_{B,p}}\\
			A\rtimes_r G \ar[r]^-{\varphi\rtimes_r G}& B\rtimes_r G
		}
	\end{equation}
	commutes.
\end{prop}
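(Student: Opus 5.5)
The plan is to show that the pointwise map $\varphi\rtimes_1 G\colon\ell^1(G;A)\to\ell^1(G;B)$ is contractive for the $\textup{PF}^*_p$-norms, i.e.
\[
\|(\varphi\rtimes_1 G)f\|_{\PF{p}{G}{B}}\le \|f\|_{\PF{p}{G}{A}}\qquad(f\in\ell^1(G;A)),
\]
and then extend by continuity. Fix a $*$-representation $\pi\colon B\to\mathcal{B}(\mathcal{H})$. By Proposition \ref{prop:pointwiseext}, $\tilde{\pi}\rtimes\lambda^p((\varphi\rtimes_1 G)f)=\widetilde{\pi\circ\varphi}\rtimes\lambda^p(f)$, and the same holds with $q$ in place of $p$. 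Here $\pi\circ\varphi$ is a $*$-representation of $A$, so $\|(\varphi\rtimes_1 G)f\|_{\pi,p}=\|f\|_{\pi\circ\varphi,p}\le\|f\|_{\PF{p}{G}{A}}$. Taking the supremum over $\pi$ gives the claim; note that the supremum defining $\|\cdot\|_{\PF{p}{G}{A}}$ runs over all representations, degenerate ones included, so no non-degeneracy of $\pi\circ\varphi$ is needed.

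Since $\iota_{A,p}$ has dense image, the map $\iota_{B,p}\circ(\varphi\rtimes_1 G)$ descends to $\iota_{A,p}(\ell^1(G;A))$. This is well defined because $\iota_{A,p}$ is injective (Remark \ref{rem:contractive}), and by the estimate above it is contractive there. It therefore extends uniquely to a contraction $\varphi\rtimes_{\textup{PF}_p}G\colon\PF{p}{G}{A}\to\PF{p}{G}{B}$, which makes the upper square commute. Multiplicativity and $*$-preservation hold on the dense subalgebra, since $\varphi\rtimes_1 G$ is a $*$-homomorphism and $\iota_{A,p},\iota_{B,p}$ are $*$-homomorphisms. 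They pass to the completion by continuity of multiplication and involution; the involution is isometric because $\|h^*\|_{\pi,p}=\|h\|_{\pi,p}$ by the computation preceding the definition. Uniqueness holds because any continuous map making the top square commute is determined on a dense set.

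For the lower square, both $j_{B,p}\circ(\varphi\rtimes_{\textup{PF}_p}G)$ and $(\varphi\rtimes_r G)\circ j_{A,p}$ are continuous maps $\PF{p}{G}{A}\to B\rtimes_r G$. On $\iota_{A,p}(f)$ they agree: both equal $\iota_{B,2}((\varphi\rtimes_1 G)f)$, using $j\circ\iota_{\cdot,p}=\iota_{\cdot,2}$ and the fact that $\varphi\rtimes_r G$ acts as $\varphi\rtimes_1 G$ on $\ell^1(G;A)\subseteq A\rtimes_r G$. By density they coincide.

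The only potentially delicate point is this last identification: $\varphi\rtimes_r G$ must restrict to the pointwise map on $\ell^1(G;A)$, not merely on $C_c(G;A)$. This follows from $\ell^1$-continuity of $\iota_{A,2}$ and $\iota_{B,2}$ together with contractivity of $\varphi\rtimes_1 G$ in the $\ell^1$-norm.
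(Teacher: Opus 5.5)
Your proposal is correct and follows the same route as the paper: the key step in both is the estimate $\|(\varphi\rtimes_1 G)f\|_{\pi,p}=\|f\|_{\pi\circ\varphi,p}\le\|f\|_{\PF{p}{G}{A}}$ obtained from Proposition \ref{prop:pointwiseext}, followed by extension by continuity. Your extra checks — multiplicativity, the isometric involution, uniqueness, and commutativity of the lower square by density — supply routine details that the paper leaves implicit.
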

\begin{proof}
	The only thing we need to show is that $\varphi\rtimes_1 G$ is continuous with respect to
	the norms $\|\cdot\|_{\PF{p}{G}{A}}$ and $\|\cdot\|_{\PF{p}{G}{B}}$.
	But as the estimate
	\begin{align*}
		&\|(\varphi\rtimes_1 G)(f)\|_{\PF{p}{G}{B}}\\
		&=\sup\lbrace \max\lbrace \| \tilde{\pi}\rtimes \lambda^p((\varphi \rtimes_1 G)(f))\|,\;
		\| \tilde{\pi}\rtimes \lambda^q((\varphi\rtimes_1 G)(f))\|\rbrace \mid\\
&
\qquad\qquad\qquad		
\qquad\qquad\qquad
\qquad\qquad\qquad
\qquad\qquad\qquad
\mid
 \pi\ \mbox{is a} \ast\text{-representation
			of } B \, \rbrace\\
		&=
		\sup\lbrace \max\lbrace \| \widetilde{\pi\circ \varphi}\rtimes \lambda^p(f)\|,\,
		\| \widetilde{\pi\circ\varphi}\rtimes \lambda^q(f)\|\rbrace \mid \pi\ \mbox{is a} \ast\text{-representation
			of } B \, \rbrace\\
		&\leq
		\sup\lbrace \max\lbrace \| \widetilde{\pi}\rtimes \lambda^p(f)\|,\,
		\| \widetilde{\pi}\rtimes \lambda^q(f)\|\rbrace \mid \pi\ \mbox{is a} \ast\text{-representation
			of } A  \, \rbrace\\
		&= \|f \|_{\PF{p}{G}{A}},
	\end{align*}
	is valid for all $f\in \ell^1(G;A)$, we have that $\varphi \rtimes_1 G$ is contractive
	with respect to the $\mathrm{PF}_p^*$-norms. This completes the proof.
\end{proof}

We shall now check that the above morphism remains an isometry when we are dealing with  inclusions. This essentially follows from the fact that we can always extend representations from  C*-subalgebras, if we allow ourselves to increase the dimension of the representation space.

\begin{prop}\label{cp:cp:hereAlgPf}
	Let $p\in (1,\infty)$, $A$ be a $G$-C*-algebra and let $B$ be a $G$-invariant
 C*-subalgebra of $A$.
	Then the inclusion map $\iota\colon B \to A$ induces an isometric $*$-monomorphism
	$\iota \cpf\colon \textup{PF}_p^*(G;B) \to \textup{PF}_p^*(G;A)$.
	% and $\textup{PF}_p^*(G;B)$ is a hereditary Banach $*$-subalgebra of $\textup{PF}_p^*(G;A)$.
\end{prop}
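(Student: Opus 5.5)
By Proposition \ref{P:vector value extension of homomorphism}, the map $\iota\rtimes_{\textup{PF}_p}G$ exists and is a contractive $*$-homomorphism. The plan is therefore to prove the reverse inequality
\[
\|f\|_{\PF{p}{G}{B}}\le \|(\iota\rtimes_1 G)(f)\|_{\PF{p}{G}{A}}
\]
for $f\in\ell^1(G;B)$. Once this holds, the map is isometric on the dense subalgebra $\ell^1(G;B)$, hence isometric (in particular injective) on the completion. By density and contractivity it suffices to treat finitely supported $f$.

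For the reverse inequality, fix a non-degenerate $*$-representation $\sigma\colon B\to\mathcal B(\mathcal H_\sigma)$. Since $B\subseteq A$ is a C*-subalgebra, there is a representation $\pi\colon A\to\mathcal B(\mathcal H_\pi)$ with $\mathcal H_\sigma\subseteq\mathcal H_\pi$ a $\pi(B)$-invariant subspace, and $\pi|_B$ restricted to $\mathcal H_\sigma$ equal to $\sigma$. Concretely, extend each state of $B$ to a state of $A$, take the GNS representations, and use that $\sigma$ is a direct sum of cyclic representations; the GNS space of the extended state contains that of the restricted state as a $B$-invariant subspace.

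Then $\ell^p(G;\mathcal H_\sigma)$ sits isometrically inside $\ell^p(G;\mathcal H_\pi)$ as a subspace. It is invariant under $\lambda^p$, and it is invariant under $\widetilde{(\pi\circ\iota)}_p(b)$ for every $b\in B$, because $\alpha_{s^{-1}}(b)\in B$ by $G$-invariance. On this subspace $\widetilde{(\pi\circ\iota)}_p\rtimes\lambda^p(f)$ restricts to $\widetilde\sigma_p\rtimes\lambda^p(f)$. Hence
\[
\|\widetilde\sigma_p\rtimes\lambda^p(f)\|\le\|\widetilde\pi_p\rtimes\lambda^p((\iota\rtimes_1G)f)\|,
\]
and likewise with $q$ in place of $p$. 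Taking the maximum and then the supremum over $\sigma$ gives $\|f\|_{\PF{p}{G}{B}}\le\|(\iota\rtimes_1G)f\|_{\PF{p}{G}{A}}$, as required.

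The only step with real content is the extension of $\sigma$ to a representation of $A$ on a larger space containing $\mathcal H_\sigma$ as a $B$-invariant subspace. I would handle it by the cyclic decomposition and state extension just described. When $A$ is non-unital or $\sigma$ is degenerate, I would either pass to the unitization or invoke the earlier proposition that restricts the supremum to non-degenerate representations. Everything else is bookkeeping.
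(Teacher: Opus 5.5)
Your proof is correct and follows the same route as the paper. Contractivity comes from Proposition~\ref{P:vector value extension of homomorphism}, and the reverse inequality comes from realizing a representation of $B$ inside a representation of $A$ on a larger Hilbert space and comparing operator norms. The only difference is in how that extension is produced and used: the paper obtains an isometry $V$ and a representation $\rho$ of $A$ with $\sigma(b)=V^*\rho(b)V$ for $b\in B$ (via Arveson's extension theorem and Stinespring) and compresses by $V\otimes 1$, whereas you use state extension and GNS to get a $\rho(B)$-invariant copy of $\mathcal{H}_\sigma$ and restrict to it. The two are equivalent, since multiplicativity of $V^*\rho(\cdot)V$ on $B$ forces $V\mathcal{H}_\sigma$ to reduce $\rho(B)$.
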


\begin{proof}
	Let $f\in \ell^1(G; B)$. We have to show that
	$\|(\iota\cpf)(f)\|_{\textup{PF}_p^*(G;A)} = \|f\|_{\textup{PF}_p^*(G;B)}$. Since the inequality
	$\|(\iota\cpf)(f)\|_{\textup{PF}_p^*(G;A)} \leq \|f\|_{\textup{PF}_p^*(G;B)}$ is automatic by Proposition \ref{P:vector value extension of homomorphism},
	it suffices to show that
	\[
	\|(\iota\cpf)(f)\|_{\textup{PF}_p^*(G;A)} \geq \|f\|_{\textup{PF}_p^*(G;B)}.\]
	In order to do this, let
	$\varepsilon>0$ be given.
	By the definition of the norm $\|\cdot\|_{\textup{PF}^*_p(G;B)}$, there is a non-degenerate $*$-representation $\pi\colon B\to \mathcal{B}(\mathcal{H})$
	such that
	$$\| f\|_{\textup{PF}^*_p(G;B)}< \max \lbrace \|\tilde{\pi}\rtimes \lambda^p(f)\|,
	\, \|\tilde{\pi}\rtimes \lambda^q(f)\|\rbrace + \varepsilon.$$
%	Now  there is a Hilbert space $\mc{K}$, such that $\mc{H} \subseteq \mc{K}$ and a  $*$-representation 	$\overline{\pi}\colon A\to \mathcal{B}(\mathcal{K})$ such that 	$\pi(\cdot) = (\overline{\pi}\circ \iota(\cdot))|_{\mc{H}}$ (see the argument in the proof of Lemma \ref{lem:condexp} below).
	
Now, there is a Hilbert space $\mc{K}$, an isometry $V\colon \mc{H}\rightarrow \mc{K}$ and a $*$-representation $\rho\colon A\to \mathcal{B}(\mathcal{K})$ such that
	$\pi(b) = V^*\rho(b) V$ for $b\in B$. Indeed, this follows by applying first the (possibly the non-unital version of) Arveson's extension theorem to $\pi$, then unitizing the resulting extension via Proposition 2.2.1 or Lemma 2.2.3 in \cite{BO} and then applying the Stinespring Theorem.

	Note that we have an isometric inclusion $V\otimes 1\colon \ell^p(G;\mc{H})\subseteq \ell^p(G;\mc{K})$ and a natural contractive projection $V^*\otimes 1\colon \ell^p(G;\mc{K}) \to \ell^p(G;\mc{H})$.
	Consequently
	\begin{align*}
&\quad\;		\| (\iota\cpf)(f)\|_{\textup{PF}_p^*(G;A)}+\varepsilon\\
 &\geq
		\max \lbrace \|\tilde{\rho}\rtimes \lambda^p((\iota\rtimes_1 G)(f))\|,\,
		\|\tilde{\rho}\rtimes \lambda^q((\iota\rtimes_1 G)(f))\| \rbrace+\varepsilon\\
		&= \max \lbrace\|\widetilde{\rho\circ\iota}\rtimes \lambda^p(f)\|,
		\|\widetilde{\rho\circ \iota}\rtimes \lambda^q(f)\| \rbrace + \varepsilon \\
		&\geq	\max \lbrace\|(V^*\otimes 1)(\widetilde{\rho\circ\iota}\rtimes \lambda^p(f)) (V\otimes 1)\|,
		\|(V^*\otimes 1)(\widetilde{\rho\circ \iota}\rtimes \lambda^q(f))(V\otimes 1)\| \rbrace + \varepsilon \\
		&=\max \lbrace\|\tilde{\pi}\rtimes \lambda^p(f)\|,
		\|\tilde{\pi}\rtimes \lambda^q(f)\| \rbrace + \varepsilon \geq \|f \|_{\textup{PF}_p^*(G;B)}.
	\end{align*}
	Since $\varepsilon$ was chosen arbitrary, we obtain
	\begin{align*}
		\|f \|_{\textup{PF}^*_p(G;B)}\leq \|(\iota\cpf)(f) \|_{\textup{PF}^*_p(G;A)}.
	\end{align*}
	It follows that $\iota \cpf$ is isometric.
\end{proof}

\section{Crossed product functor $\rtimes_{\textup{PF}_p} G$}  \label{sec:C*}

In this short, but central section of the paper, we formally introduce the $\rtimes_{\textup{PF}_p} G$ functor, exploiting Banach algebras constructed and studied in the previous section.

Recall that $\ell^1(G;\cdot)$ is a functor from
the category of $G$-C*-algebra into the category of $*$-semisimple
Banach $*$-algebras and $C^*(\cdot)$ is functor from the category of $*$-semisimple
Banach $*$-algebras into the category of C*-algebras. Recall also that the latter is constructed by defining, for any  $*$-semisimple
Banach $*$-algebra $B$, the new, in general smaller, norm 
\[ \|b\|_{C^*(B)}:= \sup\{\|\gamma(b)\| \mid  
\mc{H}\textup{ is a Hilbert space and } \gamma\colon B \to \mc{B}(\mc{H}) \textup{ a}
\ast\textup{-homomorphism}\}\]
and defining $C^*(B)$ as the completion of $B$ with respect to $\|\cdot\|_{C^*(B)}$.
 The composition
$C^*\circ \ell^1(G;\cdot)$ is then exactly the  universal crossed product functor $-\rtimes_{u}G$. It follows from Lemma \ref{cp:cp:nestedAlg} and Proposition \ref{P:vector value extension of homomorphism} that $\PF{p}{G}{\cdot}$ is a functor from
the category of $G$-C*-algebra into the category of $*$-semisimple Banach $*$-algebras. Thus we can state the following definition.

\begin{defn}
   For $p\in (1,\infty)$, denote by $\rtimes_{\textup{PF}_p} G$  the functor $C^* \circ \textup{PF}^*_p(G;\cdot)$
	from the category of $G$-C*-algebra to the category of C*-algebras. In particular, for every $G$-C*-algebra $A$ write
$A\rtimes_{\textup{PF}_p} G$ for the enveloping C*-algebra 	$C^*(\PF{p}{G}{A})$.
\end{defn}

We can immediately record a consequence of Corollary \ref{cor:indepBanach}.

\begin{thm}\label{T:C* envlope PF cross product}
	Suppose that $A$ is a $G$-C*-algebra and $\pi\colon A \to \mc{B}(\mc{H})$ is a faithful $*$-representation of $A$. Set $C^*(\PF{p,\pi}{G}{A})$ to be the enveloping C*-algebra of $\PF{p,\pi}{G}{A}$. Then
	 the identity map on $\ell^1(G;A)$ extends to an isometric isomorphism of C$^*$-algebras
	\[
	C^*(\PF{p,\pi}{G}{A})\simeq
	C^*(\PF{p}{G}{A})=A\rtimes_{\mathrm{PF}_p} G.\]
\end{thm}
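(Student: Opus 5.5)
The plan is to prove that two Banach $*$-algebras which are topologically isomorphic through the identity on a common dense $*$-subalgebra have the same enveloping C$^*$-norm. The key input is Corollary \ref{cor:indepBanach}(ii): the identity map on $\ell^1(G;A)$ extends to a bounded $*$-isomorphism $\Phi\colon \PF{p,\pi}{G}{A}\to\PF{p}{G}{A}$, and its inverse is bounded as well. Indeed $\|\cdot\|_{\pi,p}\le \|\cdot\|_{\PF{p}{G}{A}}$ holds trivially, and the reverse holds up to the constant $D_p$. Both algebras are $*$-semisimple: $\PF{p}{G}{A}$ by Proposition \ref{cp:cp:nestedAlg}, and $\PF{p,\pi}{G}{A}$ because it is isomorphic to it. So both enveloping C$^*$-algebras make sense as defined at the start of Section \ref{sec:C*}.

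First I will show that the two C$^*$-seminorms agree on $\ell^1(G;A)$. Let $\gamma\colon \PF{p}{G}{A}\to\mc{B}(\mc{H})$ be a $*$-homomorphism. Then $\gamma\circ\Phi$ is a $*$-homomorphism on $\PF{p,\pi}{G}{A}$, and $\gamma(\Phi(f))=\gamma(f)$ for $f\in\ell^1(G;A)$. Hence $\|f\|_{C^*(\PF{p}{G}{A})}\le \|f\|_{C^*(\PF{p,\pi}{G}{A})}$. The argument with $\Phi^{-1}$ gives the reverse inequality. Here I use the standard fact that $*$-homomorphisms from Banach $*$-algebras into C$^*$-algebras are automatically contractive; in fact only their continuity matters, since the supremum ranges over all $*$-representations, which correspond bijectively under composition with $\Phi$.

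Second, I conclude by density. Since $\ell^1(G;A)$ is dense in both Banach algebras and $\|\cdot\|_{C^*(B)}\le\|\cdot\|_B$, the image of $\ell^1(G;A)$ is dense in each enveloping algebra. The identity on $\ell^1(G;A)$ is isometric for the two (equal) C$^*$-norms, so it extends to an isometric $*$-isomorphism $C^*(\PF{p,\pi}{G}{A})\simeq C^*(\PF{p}{G}{A})$.

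One point needs care: the C$^*$-seminorm on $\ell^1(G;A)$ is computed from representations of the completion, not of $\ell^1(G;A)$ itself. Any $*$-representation of the completion restricts to $\ell^1(G;A)$, and conversely, extends by continuity from a dense subalgebra provided it is bounded in the relevant Banach norm. Because the two Banach norms are equivalent, the classes of bounded representations coincide, and this settles the point. I expect no real obstacle beyond this bookkeeping; all the analytic work is already contained in Corollary \ref{cor:indepBanach}.
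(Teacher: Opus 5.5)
Your proposal is correct and takes the same approach as the paper. You transport $*$-representations through the Banach $*$-isomorphism of Corollary~\ref{cor:indepBanach}(ii) in both directions, conclude that the two enveloping C$^*$-norms coincide on $\ell^1(G;A)$, and extend by density; you spell out the continuity and density bookkeeping that the paper leaves implicit.
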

\begin{proof} Take $f\in \ell^1(G;A)$. Then $\|f\|_{C^*(\PF{p,\pi}{G}{A})}$ is the supremum of $\|\gamma(f)\|$, where $\gamma$ is a $*$-representation of $\PF{p,\pi}{G}{A}$ on a Hilbert space. By composing with the isomorphism from Corollary \ref{cor:indepBanach} (ii), we obtain a $*$-representation of $\PF{p}{G}{A}$. The same logic holds in the reversed direction, hence in the end we are looking at the supremum of the same family of numbers.
	\end{proof}

\begin{prop}
Let $p \in (1, \infty)$. The functor $\rtimes_{\mathrm{PF}_p} G$ is a crossed product functor.
\end{prop}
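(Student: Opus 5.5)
To show that $\rtimes_{\mathrm{PF}_p} G$ is a crossed product functor, we must check two things. First, it is a functor: the maps $\varphi\cpf$ of Proposition \ref{P:vector value extension of homomorphism} respect identities and compositions. Second, there are natural surjections $q_A\colon A\rtimes_u G\to A\rtimes_{\mathrm{PF}_p}G$ and $s_A\colon A\rtimes_{\mathrm{PF}_p}G\to A\rtimes_r G$, each extending the identity on $A\rtimes_{\mathrm{alg}}G$, with $s_A\circ q_A=\Lambda_A$.

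\emph{Functoriality.} Each morphism $\varphi$ induces the contractive $*$-homomorphism $\varphi\cpf$. Any contractive $*$-homomorphism between Banach $*$-algebras induces a $*$-homomorphism between their C*-envelopes, since composing it with a Hilbert space $*$-representation of the target gives one of the source. So $C^*(\varphi\cpf)$ is defined. Functoriality then holds because all maps are determined on the dense subalgebra $\ell^1(G;A)$, where they act as pointwise application of $\varphi$.

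\emph{The map $q_A$.} The inclusion $\iota_{A,p}\colon\ell^1(G;A)\to \PF{p}{G}{A}$ is contractive (Remark \ref{rem:contractive}). Every $*$-representation of $\PF{p}{G}{A}$ therefore restricts to a $*$-representation of $\ell^1(G;A)$, giving
\[\|f\|_{C^*(\PF{p}{G}{A})}\le \|f\|_{C^*(\ell^1(G;A))}=\|f\|_{A\rtimes_u G}.\]
Hence the identity extends to a $*$-homomorphism $q_A$. Its image is closed and contains the dense subalgebra $\ell^1(G;A)$, so $q_A$ is surjective.

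\emph{The map $s_A$.} This is the step needing care. By Proposition \ref{cp:cp:nestedAlg} there is a contractive $*$-homomorphism $j_{A,p}\colon\PF{p}{G}{A}\to A\rtimes_r G$. (For $p>2$ we first use $\PF{p}{G}{A}=\PF{q}{G}{A}$ from Remark \ref{rem:contractive}.) Composing $j_{A,p}$ with a faithful representation of $A\rtimes_r G$ gives a Hilbert space $*$-representation of $\PF{p}{G}{A}$. Consequently
\[\|f\|_{A\rtimes_r G}\le \|f\|_{C^*(\PF{p}{G}{A})},\]
and the identity extends to a surjective $*$-homomorphism $s_A$; surjectivity follows by the same density argument. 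Since both $s_A\circ q_A$ and $\Lambda_A$ are the identity on $\ell^1(G;A)$, which is dense in $A\rtimes_u G$, they agree.

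\emph{Naturality.} For $\varphi\colon A\to B$, both naturality squares, the one for $q$ and the one for $s$, commute on $\ell^1(G;A)$, as in the diagram of Proposition \ref{P:vector value extension of homomorphism}. By continuity and density they commute everywhere.

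I expect no real obstacle. The only point requiring attention is that the C*-envelope is functorial for contractive $*$-homomorphisms, and the $p\leftrightarrow q$ symmetry needed to invoke $j_{A,p}$ for every $p\in(1,\infty)$.
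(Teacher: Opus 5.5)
Your proposal is correct and follows the paper's argument: the paper likewise obtains $q_A=C^*(\iota_{A,p})$ and $s_A=C^*(j_{A,p})$ as the C*-envelope maps induced by the contractive dense-range $*$-homomorphisms of Remark \ref{rem:contractive} and Proposition \ref{cp:cp:nestedAlg}. You additionally spell out functoriality, naturality and $s_A\circ q_A=\Lambda_A$, which the paper leaves implicit.
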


\begin{proof}
	Let $A$ be a $G$-C*-algebra. Then, by Lemma \ref{cp:cp:nestedAlg} and its proof, $\PF{p}{G}{A}$ is a $*$-semisimple Banach $*$-algebra and moreover the canonical contractive $*$-homomorphisms $\iota_{A,p}\colon \ell^1(G;A) \to \textup{PF}_p^*(G;A)$ and
	$j_{A,p}\colon \textup{PF}_p^*(G;A) \to A\rtimes_r G$  are injective and 	
	have dense images. Thus, we have surjective $*$-homomorphism
	$q_A = C^*(\iota_{A,p})\colon A\rtimes_{u} G \to A\rtimes_{\textup{PF}_p}G$ and
	$s_A = C^*(j_{A,p})\colon A\rtimes_{\textup{PF}_p}G \to A\rtimes_r G$.
\end{proof}

We will prove in Theorem \ref{thm:projectionprop} that $\rtimes_{\mathrm{PF}_p} G$ is a correspondence functor, by establishing its projection property. As a first step towards this goal, we prove that it has the ideal property. 

\begin{prop}
	Let $p \in (1, \infty)$. The functor $\rtimes_{\mathrm{PF}_p}G$ has the ideal property.
\end{prop}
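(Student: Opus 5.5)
Given a $G$-invariant closed ideal $I\subseteq A$, we must show that $\iota\rtimes_{\mathrm{PF}_p}G\colon C^*(\PF{p}{G}{I})\to C^*(\PF{p}{G}{A})$ is isometric. Proposition \ref{cp:cp:hereAlgPf} already identifies $\PF{p}{G}{I}$ isometrically with the closure $J$ of $\ell^1(G;I)$ in $\PF{p}{G}{A}$. The remaining task is therefore purely about enveloping C*-algebras: every $*$-representation $\gamma$ of $J$ on a Hilbert space should be dominated, in norm on $\ell^1(G;I)$, by some $*$-representation of $\PF{p}{G}{A}$. The inequality $\|\cdot\|_{C^*(J)}\ge\|\cdot\|_{C^*(\PF{p}{G}{A})}$ on $J$ is automatic, because restricting representations only decreases the supremum.

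The plan is to show first that $J$ is a closed two-sided $*$-ideal of $\PF{p}{G}{A}$. It is the closure of $\ell^1(G;I)$, which is a $*$-ideal in $\ell^1(G;A)$: for instance $(f*g)(t)=\sum_s f(s)\alpha_s(g(s^{-1}t))\in I$ when $g$ takes values in $I$, and $I$ is $G$-invariant. By continuity of multiplication and density of $\ell^1(G;A)$, $J$ is then an ideal in the completion. By Proposition \ref{prop:approximate} applied to the $G$-C*-algebra $I$, $J\cong\PF{p}{G}{I}$ has a contractive approximate identity $(e_\lambda)$.

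Next, take a $*$-representation $\gamma\colon J\to\mc{B}(\mc{H})$; without loss of generality it is non-degenerate, after cutting down to its essential space. We extend it to $\PF{p}{G}{A}$ by the standard formula
\[
\bar\gamma(x)\gamma(y)\xi:=\gamma(xy)\xi,\qquad x\in\PF{p}{G}{A},\ y\in J,\ \xi\in\mc{H}.
\]
Well-definedness and boundedness come from the Cohen factorization / approximate identity argument. We note $\bar\gamma(x)=\lim_\lambda\gamma(xe_\lambda)$ strongly, and $\|\gamma(xe_\lambda)\|\le\|\gamma\|\,\|x\|$. Since $\gamma$ is a $*$-representation of a Banach $*$-algebra it is automatically contractive in the C*-norm sense, and the extension $\bar\gamma$ is a $*$-homomorphism. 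It is bounded because
\[
\|\bar\gamma(x)\|\le\sup_\lambda\|\gamma(xe_\lambda)\|\le \|x\|_{C^*(J)\text{-type bound}}<\infty.
\]
Continuity is only needed to make $\bar\gamma$ a legitimate $*$-representation of $\PF{p}{G}{A}$; its value on $J$ is exactly $\gamma$. Hence
\[
\|\gamma(y)\|=\|\bar\gamma(y)\|\le\|y\|_{C^*(\PF{p}{G}{A})},\qquad y\in J,
\]
and taking the supremum over $\gamma$ gives isometry.

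The main obstacle is the extension step: verifying carefully that $\bar\gamma$ is well defined and bounded for a non-C*-algebra. A clean route is to check that $\|\gamma(xy)\|\le\|x\|_{\PF{p}{G}{A}}\|\gamma(y)\|$. This follows from $\|\gamma(xy)\eta\|^2=\langle\eta,\gamma(y^*x^*xy)\eta\rangle$ together with the fact that $z\mapsto\gamma(y^*zy)$ is a positive functional-type map on $\PF{p}{G}{A}$ bounded by $\|z\|$, via the spectral-radius estimate $\|\gamma(w)\|\le r(w)\le\|w\|$ for hermitian $w$ in a Banach $*$-algebra. Once this estimate is in place, the rest of the argument is routine.
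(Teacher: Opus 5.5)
Your proposal is correct and follows essentially the paper's proof: you identify $\PF{p}{G}{I}$ with a closed $*$-ideal of $\PF{p}{G}{A}$ via Proposition \ref{cp:cp:hereAlgPf}, use its contractive approximate identity (Proposition \ref{prop:approximate}) to extend non-degenerate $*$-representations from the ideal to $\PF{p}{G}{A}$, and conclude that the two C*-norms agree. The paper phrases the conclusion as injectivity on $C^*(\PF{p}{G}{I})$ while you phrase it as isometry on the dense subalgebra; these are equivalent, and your strong-limit construction $\bar\gamma(x)=\lim_\lambda\gamma(xe_\lambda)$, using $\|\gamma(xe_\lambda)\|\le\|x\|$ and $\gamma(xe_\lambda)\gamma(y)\xi=\gamma(xe_\lambda y)\xi\to\gamma(xy)\xi$, already gives well-definedness and boundedness. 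The spectral-radius detour in your last paragraph is therefore unnecessary, and as phrased it is not quite the estimate one would need.
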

\begin{proof}
	Let $A$ be a $G$-C*-algebra, $I$ an ideal in $A$ invariant under the action of $G$ and let
	$\iota\colon I \to A$ be the canonical inclusion. By Proposition \ref{cp:cp:hereAlgPf}, the map $\iota \cpf \colon \textup{PF}_p^*(G;I)
	\to \textup{PF}^*_p(G;A)$ is an isometric $*$-monomorphism, hence $\textup{PF}_p^*(G;I)$ (identified with the image of $\iota\cpf$)	is a closed $*$-ideal in $\textup{PF}_p^*(G;A)$. The map $\iota \cpf$ extends to a $*$-homomorphism, denoted by the same symbol, $\iota \cpf\colon C^*(\textup{PF}_p^*(G;I))\to C^*(\textup{PF}_p^*(G;A))$. It remains to show that it is injective.

Choose $0\neq x\in C^*(\textup{PF}_p^*(G;I))$ and let $\Pi\colon C^*(\textup{PF}_p^*(G;I))\to \mc{B}(\mathcal{H})$ be a non-degenerate $*$-representation of $C^*(\textup{PF}_p^*(G;I))$ on a Hilbert space $\mathcal{H}$ such that $\Pi(x)\neq 0$. Let $\Pi_{\textup{PF}_p^*(G;I)}$ be the restriction of $\Pi$ to $\textup{PF}_p^*(G;I)$, it is also non-degenerate. Since by Proposition \ref{prop:approximate} the algebra $\textup{PF}_p^*(G;I)$ admits a contractive approximate identity, and $\PF{p}{G}{I}$ is an ideal in $\PF{p}{G}{A}$, $\Pi_{\textup{PF}_p^*(G;I)}$ extends to a $*$-homomorphism $\widetilde{\Pi}_0\colon  \textup{PF}_p^*(G;A)\rightarrow \mc{B}(\mathcal{H})$. Finally by the universal property of the C*-envelope, $\widetilde{\Pi}_0$ extends to a $*$-representation $\widetilde{\Pi}\colon C^*(\textup{PF}_p^*(G;A))\rightarrow \mc{B}(\mc{H})$. Write $x=\lim_{i\in I} x_i$ with $x_i\in \PF{p}{G}{I}$. We obtain
\[
\widetilde{\Pi}\bigl( (\iota\rtimes_{\textup{PF}_p}G)(x)\bigr)=
\lim_{i\in I}
\widetilde{\Pi}\bigl( (\iota\rtimes_{\textup{PF}_p}G)(x_i)\bigr)=
\lim_{i\in I}
\Pi_{\PF{p}{G}{I}}(x_i)=\Pi(x)\neq 0.
\]
This completes the proof.
\end{proof}

Next we prove one more auxiliary lemma.

\begin{lem} \label{lem:condexp}
	Let $p\in (1,\infty)$, $A$ be a $G$-C*-algebra and let $e\in \mathcal{M}(A)$ be a $G$-invariant projection. Then the conditional expectation $\mathbb{E}\colon A\to eAe,\, a \mapsto eae$ extends to a contractive linear map
	$\mathbb{E} \rtimes_1 G\colon \ell^1(G;A) \to \ell^1(G;eAe),\, f \mapsto \mathbb{E}\circ f$ and further
	to a contractive linear map $\mathbb{E}\cpf \colon  \PF{p}{G}{A} \to \PF{p}{G}{eAe}$ such that the diagram
	\begin{equation}
		\xymatrix{
			\ell^1(G;A) \ar[r]^-{\mathbb{E}\rtimes_1 G} \ar[d]_{\iota_{A,p}} & \ell^1(G;eAe) \ar[d]^-{\iota_{eAe,p}}\\
			\PF{p}{G}{A} \ar[r]^-{\mathbb{E}\cpf} & \PF{p}{G}{eAe}
		}
	\end{equation}
	commutes.
\end{lem}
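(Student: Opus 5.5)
The plan is to show that $\mathbb{E}\rtimes_1 G$ is contractive for the $\mathrm{PF}_p^*$-norms. Contractivity for the $\ell^1$-norms is immediate from $\|eae\|\le\|a\|$. Since $e$ is $G$-invariant, $\alpha_s(eae)=e\alpha_s(a)e$, so $\mathbb{E}$ is $G$-equivariant. The extension to $\PF{p}{G}{A}$ then follows by density, and the commuting diagram is automatic. So it suffices to prove
\[
\|\mathbb{E}\circ f\|_{\PF{p}{G}{eAe}}\le \|f\|_{\PF{p}{G}{A}}\qquad (f\in\ell^1(G;A)).
\]
It is enough to bound $\|\widetilde{\pi}\rtimes\lambda^p(\mathbb{E}\circ f)\|$ and the analogous $q$-term for every non-degenerate $*$-representation $\pi\colon eAe\to\mc{B}(\mc{H})$.

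Given such a $\pi$, I would produce a $*$-representation $\rho\colon A\to\mc{B}(\mc{K})$ and an isometry $V\colon\mc{H}\to\mc{K}$ satisfying
\[
V^*\rho(a)V=\pi(eae)\qquad (a\in A).
\]
One concrete route is to use that $eAe$ is a hereditary subalgebra (a corner), so that $\pi$ extends to a representation $\rho$ of $A$ on a larger space $\mc{K}\supseteq\mc{H}$, with $\mc{H}=\overline{\rho(e)\mc{K}}$ after extending $\rho$ to $\mathcal{M}(A)$. Alternatively, one can proceed as in the proof of Proposition \ref{cp:cp:hereAlgPf}: apply Stinespring to the completely positive map $a\mapsto\pi(eae)$, which is non-degenerate after unitizing. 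In either case the key identity is
\[
V^*\rho(a)V=V^*\rho(e)\rho(a)\rho(e)V=\pi(eae),
\]
which uses that $\rho(e)V=V$.

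Next I apply this identity pointwise. For $s\in G$ it gives
\[
V^*\rho(\alpha_{s^{-1}}(a))V=\pi(\alpha_{s^{-1}}(eae)),
\]
by $G$-invariance of $e$. Hence on $\ell^p(G;\mc{H})$ we get
\[
(V^*\otimes1)\bigl(\widetilde{\rho}\rtimes\lambda^p(f)\bigr)(V\otimes1)=\widetilde{\pi}\rtimes\lambda^p(\mathbb{E}\circ f).
\]
This holds because $V\otimes 1$ commutes with $\lambda^p$ and acts pointwise. Since $V\otimes1$ is isometric and $V^*\otimes1$ is contractive on $\ell^p(G;\cdot)$, the norm of the left-hand side is at most $\|\widetilde{\rho}\rtimes\lambda^p(f)\|\le\|f\|_{\PF{p}{G}{A}}$. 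The same holds for $q$, and taking the supremum over $\pi$ finishes the argument.

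The main obstacle is the dilation step: producing $\rho$ and $V$ with $V^*\rho(\cdot)V=\pi(e\cdot e)$ exactly when $e$ is only a multiplier and $A$ may be non-unital. I would handle it via Stinespring applied to the contractive completely positive map $a\mapsto\pi(eae)$. I would then check that the dilation is a genuine $*$-representation of $A$; non-degeneracy is not needed, since the $\mathrm{PF}$-norm is a supremum over all representations.
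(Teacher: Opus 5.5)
Your proposal is correct and follows essentially the paper's argument: you dilate to a representation $\rho$ of $A$ with an isometry $V$, write $\widetilde{\pi}\rtimes\lambda^p(\mathbb{E}\circ f)$ as the compression of $\widetilde{\rho}\rtimes\lambda^p(f)$ by $V\otimes 1$ for both $p$ and $q$, and take the supremum over $\pi$. The only cosmetic difference is that the paper dilates $\pi$ itself from $eAe$ (Arveson extension plus Stinespring, as in Proposition \ref{cp:cp:hereAlgPf}) and then compresses by $\rho(e)V\otimes 1$, whereas your Stinespring dilation of $a\mapsto\pi(eae)$ builds the cut-down by $e$ directly into $V$ and so avoids extending $\rho$ to $\mathcal{M}(A)$.
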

\begin{proof}
	Let $\pi\colon eAe \to \mathcal{B}(\mathcal{H})$ be a non-degenerate $*$-homomorphism. 
	Then, similarly as in the proof of Proposition \ref{cp:cp:hereAlgPf}, there is a Hilbert space $\mc{K}$, an isometry $V\colon \mathcal{H}\to \mathcal{K}$ and a $*$-representation $\rho\colon A\rightarrow \mc{B}(\mc{K})$ such that $\pi(x)=V^* \rho(x) V$ for all $x \in eAe$. A straightforward calculation reveals
\[ \tilde{\pi}\rtimes \lambda^p((\mathbb{E}\rtimes_1 G )(f)) =
(V^* \rho(e)\otimes 1)\,(\tilde{\rho}\rtimes \lambda^p) (f) \,  (\rho(e)V\otimes 1),
	\]
	hence
	\begin{align*}
		\| \tilde{\pi}\rtimes \lambda^p( (\mathbb{E}\rtimes_1 G)(f))\| \leq \| \tilde{\rho}\rtimes \lambda^p (f)\|
	\end{align*}
	for all $f\in \ell^1(G;A)$. This implies that $\|(\mathbb{E}\rtimes_1 G)(f) \|_{\pi,p} \leq \| f\|_{\rho,p}$ for all $f\in \ell^1(G;A)$. Consequently $\| (\mathbb{E}\rtimes_1 G)(f)\|_{\PF{p}{G}{eAe}}\leq \| f \|_{\PF{p}{G}{A}}$ and $\mathbb{E}\rtimes_1 G$ extends to a contraction $\PF{p}{G}{A}\rightarrow \PF{p}{G}{eAe}$.
\end{proof}

We are now ready for the theorem mentioned above.

\begin{thm}\label{thm:projectionprop}
Let $p\in (1,\infty)$. The functor $\rtimes_{\textup{PF}_p} G$ has the projection property, hence it is a correspondence functor.
\end{thm}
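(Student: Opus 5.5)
We must show that for a $G$-invariant projection $e\in\mathcal{M}(A)$ the inclusion $j\colon eAe\to A$ induces an injective $*$-homomorphism $j\rtimes_{\mathrm{PF}_p}G\colon eAe\rtimes_{\mathrm{PF}_p}G\to A\rtimes_{\mathrm{PF}_p}G$. The equivalence with the correspondence property then follows from Theorem \ref{thm:char_correspondence_func}. The plan is to build a left inverse of $j\rtimes_{\mathrm{PF}_p}G$ at the C$^*$-level. Since $A\rtimes_{\mathrm{PF}_p}G$ is only a C$^*$-envelope, we work with $*$-representations.

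First, by Proposition \ref{cp:cp:hereAlgPf} the map $j\cpf\colon \PF{p}{G}{eAe}\to\PF{p}{G}{A}$ is an isometric $*$-monomorphism. Lemma \ref{lem:condexp} gives the contractive map $\mathbb{E}\cpf$ with $\mathbb{E}\cpf\circ j\cpf=\mathrm{id}$. Let $E$ be the element of $\mathcal{M}(A\rtimes_u G)$ (or of the multiplier algebra of any of the relevant algebras) coming from $e$. Since $e$ is $G$-invariant, on $\ell^1(G;A)$ we have $\mathbb{E}\rtimes_1 G(f)=EfE$, because $(efe)(s)=e f(s)\alpha_s(e)=ef(s)e$. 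Hence the image of $j\cpf$ equals the corner $E\,\PF{p}{G}{A}\,E$. Here $E$ acts on $\PF{p}{G}{A}$ by left and right multiplication, which are contractive because they are given by $\tilde\rho(e)$ in every $\ell^p$-regular representation.

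The key step is the following. Take $0\neq x\in eAe\rtimes_{\mathrm{PF}_p}G$ and a nondegenerate $*$-representation $\Pi$ of $\PF{p}{G}{eAe}$ with $\Pi(x)\neq 0$. We want to extend $\Pi$ to a $*$-representation of $\PF{p}{G}{A}$ on a larger Hilbert space that compresses back to $\Pi$. There are two routes. The first is to imitate the ideal-property proof using the closed ideal $J=\overline{\PF{p}{G}{A}\,E\,\PF{p}{G}{A}}$. In that route, $\PF{p}{G}{eAe}=EJE$ is a full corner of $J$, which suggests inducing $\Pi$ along a Morita-type bimodule. The second, which I would try first, is to apply positivity directly. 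Define the functional $\omega(y)=\langle\xi,\Pi(EyE)\eta\rangle$, using that $\mathbb{E}\cpf$ is a contraction, and use GNS. The cleanest form of this is to show that the map $\Pi\circ\mathbb{E}\cpf\colon\PF{p}{G}{A}\to\mathcal{B}(\mathcal{H})$ is completely positive and contractive on the $*$-algebra $\ell^1(G;A)$. It is completely positive because $y^*y\mapsto Ey^*yE=(yE)^*(yE)$, and we can write $\Pi(Ey^*yE)$ as $\Pi'(yE)^*\Pi'(yE)$ once $\Pi$ is extended to the right module $\PF{p}{G}{A}E$ via a contractive approximate identity (Proposition \ref{prop:approximate}). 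A Stinespring/GNS construction on $\ell^1(G;A)$ then gives a $*$-representation $\sigma$ of $\PF{p}{G}{A}$ together with an isometry $W$ such that $W^*\sigma(y)W=\Pi(EyE)$. For $\sigma$ to be bounded in the $\PF{p}{G}{A}$-norm, so that it factors through the C$^*$-envelope, one needs the seminorm estimate $\|\sigma(y)\|\le\|y\|_{\mathrm{PF}}$. This follows from $\|\sigma(y)\zeta\|^2=\|\Pi'(y z)\|^2$-type bounds together with the contractivity of $\mathbb{E}\cpf$ and of multiplication.

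Finally, $\sigma\circ(j\cpf)$ has $W^*\sigma(j(x'))W=\Pi(x')$ for $x'\in\PF{p}{G}{eAe}$. Passing to C$^*$-envelopes gives $\Pi(x)=W^*\tilde\sigma((j\rtimes_{\mathrm{PF}_p}G)(x))W$, so $(j\rtimes_{\mathrm{PF}_p}G)(x)\neq 0$. The main obstacle is the middle step: verifying that the GNS representation built from $\Pi\circ\mathbb{E}\cpf$ is bounded with respect to the $\PF{p}{G}{A}$-norm rather than only the $\ell^1$-norm. If this fails directly, I would instead reduce to the ideal property. In that approach one uses that $E$ is full in $J$ and that $J$ has a contractive approximate identity built from $\PF{p}{G}{A}E\,\PF{p}{G}{A}$, so that $\Pi$ induces a representation of $J$, which then extends to $\PF{p}{G}{A}$ as in the ideal-property proof.
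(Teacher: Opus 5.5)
Your overall architecture is the right one and is the one the paper uses: compress by $E$ via $\mathbb{E}\cpf$ (continuous on $\PF{p}{G}{A}$ by Lemma~\ref{lem:condexp}), dilate by a GNS/Stinespring construction, and compare the two C$^*$-norms.

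\textbf{The gap.} It lies in the step you treat as easy: positivity of $y\mapsto \Pi\bigl((\mathbb{E}\rtimes_1 G)(y)\bigr)$ on $\ell^1(G;A)$. Your justification is circular. Since $yE\notin\PF{p}{G}{eAe}$, the operator $\Pi(Ey^*yE)$ is not a priori of the form $T^*T$. A map $\Pi'$ on $\PF{p}{G}{A}E$ with $\Pi'(z)^*\Pi'(w)=\Pi(z^*w)$ can only be built (on a completion of $\PF{p}{G}{A}E\odot\mathcal{H}$) once the matrices $[\Pi(z_i^*z_j)]$ are known to be positive. That is exactly the complete positivity you are trying to prove.

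A contractive approximate identity does not supply this. It lets one extend a representation from an \emph{ideal} to the whole algebra on the same Hilbert space, as in the proof of the ideal property. But $\PF{p}{G}{eAe}$ is only a corner, and extending from a corner needs an induced, larger space whose inner product must first be shown positive. In fact the formal ingredients you invoke ($E=E^*=E^2$, contractivity, approximate units) cannot suffice. Take $M_2(\C)$ with the isometric involution $x^\#=Dx^*D$, $D=\mathrm{diag}(1,-1)$. The projection $E=e_{11}$ is self-adjoint and $ce_{11}\mapsto c$ is a $*$-representation of the corner, yet $y=e_{21}$ gives $Ey^\#yE=-e_{11}$.

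Your fallback via $J=\overline{\PF{p}{G}{A}E\,\PF{p}{G}{A}}$ has the same defect. Morita-type induction requires the same positivity, and a contractive approximate identity for $J$ is not available for free outside the C$^*$-setting.

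\textbf{The missing input.} It is C$^*$-theoretic and comes from the universal crossed product. Restricted to $\ell^1(G;eAe)$, every $*$-representation $\Pi$ of $\PF{p}{G}{eAe}$ extends to $eAe\rtimes_u G$. Since $\rtimes_u G$ is a correspondence functor, $\mathbb{E}\rtimes_u G\colon A\rtimes_u G\to eAe\rtimes_u G$ is completely positive, and it agrees with $\mathbb{E}\rtimes_1 G$ on $\ell^1(G;A)$. Hence $\Pi\circ(\mathbb{E}\rtimes_1 G)$ is completely positive on $\ell^1(G;A)$. By Lemma~\ref{lem:condexp} and density, so is $\Pi\circ(\mathbb{E}\cpf)$ on $\PF{p}{G}{A}$. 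Alternatively, you could induce the covariant pair of $(eAe,G)$ underlying $\Pi$ through the $G$-equivariant Hilbert $eAe$-module $Ae$; positivity holds there because $eAe$ is a C$^*$-algebra.

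\textbf{Your ``main obstacle'' is routine.} A continuous positive functional $\tilde\varphi$ on the Banach $*$-algebra $\PF{p}{G}{A}$ has a GNS representation of $\PF{p}{G}{A}$ itself. That representation is automatically contractive, hence factors through $C^*(\PF{p}{G}{A})$. The paper takes $\tilde\varphi=\varphi\circ(\mathbb{E}\cpf)$, where $\varphi$ is a vector state of a cyclic representation of $\PF{p}{G}{eAe}$ with $\varphi(x^*x)=N_1(x^*x)$. It then uses a contractive approximate identity $(u_i)$ of $\PF{p}{G}{A}$ to get $N_1(x)^2=\lim_i\tilde\varphi(u_i^*x^*xu_i)\le N_2(x)^2$. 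With positivity supplied this way, your plan becomes the paper's proof.
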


\begin{proof}
	Let $A$ be a $G$-C*-algebra and $e\in \mathcal{M}(A)$ a $G$-invariant projection. We need to show that the canonical inclusion $\iota\colon eAe \to A$ induces a injective $*$-homomorphism $\iota\rtimes_{\textup{PF}_p} G \colon eAe\rtimes_{\textup{PF}_p}G \to A\rtimes_{\textup{PF}_p}G$.

By Proposition \ref{cp:cp:hereAlgPf}, we can view $\textup{PF}_p^*(G;eAe)$ as
	a Banach $*$-subalgebra of $\textup{PF}_p^*(G;A)$. Let
$N_1\colon \textup{PF}_p^*(G,eAe)\to[0,\infty)$ and $N_2\colon \textup{PF}_p^*(G,A)\to [0,\infty)$ denote the restriction of the C*-norm of $C^*(\textup{PF}_p^*(G;eAe))$ and $C^*(\textup{PF}_p^*(G;A))$, respectively. Since any $*$-representation of $\PF{p}{G}{A}$ can be restricted to $\PF{p}{G}{eAe}$, we see that $N_1 \geq N_2\vert_{\textup{PF}_p^*(G;eAe)}$. Thus it remains	to show that $N_1 \leq N_2\vert_{\textup{PF}_p^*(G,eAe)}$.
	
Take $0\neq x\in \textup{PF}_p^*(G;eAe)$. There is
	a $*$-representation $\Pi\colon \textup{PF}_p^*(G;eAe)\to \mathcal{B}(\mathcal{H})$ on a Hilbert space $\mathcal{H}$, with the unit cyclic vector $\xi \in \mathcal{H}$ such
	that $N_1(x^*x) = \la \xi ,  \Pi(x^*x)\xi\ra $. Let $\varphi = \langle \xi, \Pi(\cdot)\xi \rangle$
	denote the corresponding vector state. We claim that
	\begin{align*}
		\tilde{\varphi} = \varphi \circ (\mathbb{E}\cpf) \colon \PF{p}{G}{A} \to \mathbb{C}
	\end{align*}
	is a positive linear functional, where $\mathbb{E}\colon A\ni a\mapsto e a e\in eAe$ is the canonical conditional expectation. Since $\rtimes_{u}G$ is a correspondence functor, $eAe\rtimes_{u}G$ is a hereditary sub-C*-algebra of $A\rtimes_{u}G$ and the $G$-equivariant
	(completely positive) map $\mathbb{E}\colon A \to eAe$ induces a completely positive map $$\mathbb{E}\rtimes_{u}G \colon A\rtimes_{u}G \to
	eAe\rtimes_{u}G.$$	
	Note that $\varphi':=\varphi \circ \iota_{eAe,p}$ is a positive linear functional on $\ell^1(G;eAe)$. Since
	$$C^*(\ell^1(G;eAe)) = eAe\rtimes_{u}G,$$ there is a unique continuous extension of $\varphi'$, $\varphi'' \colon eAe\rtimes_{u}G \to \mathbb{C}$
	such that we  obtain the following commutative diagram
	\begin{equation}
		\xymatrix{
			A\rtimes_{u}G \ar[r]^-{\mathbb{E}\rtimes_{u}G}& eAe\rtimes_{u}G \ar[rd]^-{\varphi''}\\
			\ell^1(G;A) \ar[u] \ar[r]^-{\mathbb{E}\rtimes_1 G} \ar[d]^-{\iota_{A,p}}& \ell^1(G;eAe) \ar[u]
			\ar[d]^-{\iota_{eAe,p}} \ar[r]^-{\varphi'}& \mathbb{C}\\
			\PF{p}{G}{A}\ar[r]^-{\mathbb{E}\cpf }& \PF{p}{G}{eAe} \ar[ru]^-{\varphi}.
		}
	\end{equation}
The linear map $\varphi'' \circ (\mathbb{E}\rtimes_{u} G)$ is positive, therefore
	$\varphi' \circ (\mathbb{E}\rtimes_1 G)$ is a positive functional. Since $\tilde{\varphi} \circ \iota_{A,p}=
	\varphi' \circ (\mathbb{E}\rtimes_1 G)$, and $\iota_{A,p}$ has dense image, the functional
	$\tilde{\varphi}$ is positive.
Consider the GNS representation for $\tilde{\varphi}$, as described on \cite[Page 198]{BonsallDuncan}. It gives us a Hilbert space $\mc{H}_{\tilde{\varphi}}$, $*$-representation $\Pi_{\tilde{\varphi}}\colon \PF{p}{G}{A}\rightarrow \mc{B}(\mc{H}_{{\tilde{\varphi}}})$ and a contractive linear map $\Lambda_{{\tilde{\varphi}}}\colon \PF{p}{G}{A}\rightarrow \mc{H}_{\tilde{\varphi}}$ satisfying $\Pi_{\tilde{\varphi}}(y)\Lambda_{\tilde{\varphi}}(y')=\Lambda_{\tilde{\varphi}}(yy')$, $\la \Lambda_{\tilde{\varphi}}(y) , \Lambda_{\tilde{\varphi}}(y')\ra = \tilde{\varphi}(y^*y')$. Let $(u_i)_{i\in I}$ be a contractive approximate unit in $\PF{p}{G}{A}$ (Proposition \ref{prop:approximate}). We obtain	
	\[\begin{split}
N_1(x)^2&=N_1(x^*x)=\la \xi , \Pi(x^*x)\xi\ra=
\varphi(x^*x)=\tilde{\varphi}(x^*x)=
\lim_{i\in I} \tilde{\varphi}(u_i^* x^*x u_i)\\
&=
\lim_{i\in I} \la \Lambda_{\tilde{\varphi}}(u_i) , \Pi_{\tilde{\varphi}}(x^*x) \Lambda_{\tilde{\varphi}}(u_i)\ra \le 
\|\Pi_{\tilde{\varphi}}(x^*x)\| \le N_2(x^*x)=N_2(x)^2.
\end{split}\]	
%	 Hence, the GNS-construction $(\tilde{\mathcal{H}},\tilde{\Pi},\tilde{\xi})$ of $\tilde{\varphi}$ is an extension of $\Pi$, i.e.
%	$\tilde{\Pi}(x)\vert_{\tilde{\pi}(\PF{p}{G}{eAe})\tilde{\mathcal{H}}} = \Pi(x)$
%	for all $x \in \PF{p}{G}{eAe}$. Thus
%$$N_1(x) = \|\Pi(x)\| = \| \tilde{\Pi}(x)\vert_{\tilde{\Pi}(\PF{p}{G}{eAe})\tilde{\mathcal{H}}} \|\leq \| \tilde{\Pi}(x) \| \leq N_2(x).$$
\end{proof}

Recall the Brown-Guentner crossed product functor (\cite[Section 2.2]{BEW}).

\begin{cor}
Let $p \in (1, \infty)$ and let $G$ be non-amenable. Then the crossed product functors $\rtimes_{\textup{BG}_p}G$ and $\rtimes_{\textup{PF}_p}G$ are different.
\end{cor}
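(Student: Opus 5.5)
The plan is to show that the two functors have different structural properties. Theorem \ref{thm:projectionprop} shows that $\rtimes_{\textup{PF}_p}G$ is a correspondence functor. By \cite[Remark 4.19]{BEW}, the Brown--Guentner functor $\rtimes_{\textup{BG}_p}G$ is not a correspondence functor when $G$ is non-amenable. Functors with different properties cannot coincide, so it suffices to make this argument precise.

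First I would recall the definition of the Brown--Guentner functor. For a $G$-C*-algebra $A$, the algebra $A\rtimes_{\textup{BG}_p}G$ is the completion of $A\rtimes_{\mathrm{alg}}G$ with respect to the covariant representations $(\pi,U)$ of $(A,G)$ whose matrix coefficients $s\mapsto\langle \xi,U_s\eta\rangle$ lie in $\ell^p(G)$ on a dense set of vectors. Equivalently, one completes with respect to $C^*_{\ell^p}(G)$ via the coaction.

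Second, I would exhibit the failure of the correspondence property for $\rtimes_{\textup{BG}_p}G$. The cleanest witness is a trivial-coefficient system. Take $A=\mathcal{K}(\ell^2(G))$ with the action $\mathrm{Ad}\,\lambda$, or alternatively $A=C_0(G)$ with translation. For such proper actions every covariant representation is weakly contained in the regular one, so $A\rtimes_{\textup{BG}_p}G=A\rtimes_r G$. On the other hand, the rank-one projection $e$ onto $\delta_e$ is not $G$-invariant. So I would instead use the argument of \cite[Remark 4.19]{BEW}: correspondence functors satisfy $\mathbb{C}\rtimes_\mu G\hookrightarrow A\rtimes_\mu G$ whenever there is an invariant state (Proposition \ref{invariantstate}), and the hereditary-subalgebra property forces $C^*_\mu(G)$ to be recoverable from $A\rtimes_\mu G$. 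For the BG functor, take $A=\ell^\infty(G)$, or any unital $G$-algebra with an invariant state and an amenable action. This requires $G$ exact; failing that, use $A=\mathcal{K}(\ell^2(G))\otimes$-type Morita-equivalent data. In either case, $A\rtimes_{\textup{BG}_p}G=A\rtimes_r G$, while for $2\neq p$ non-amenable the BG group algebra $C^*_{\ell^p}(G)$ differs from $C^*_r(G)$. This is exactly the content cited from \cite{BEW}.

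Third, I would conclude. If the two functors were equal, then $\rtimes_{\textup{BG}_p}G$ would be a correspondence functor by Theorem \ref{thm:projectionprop}, contradicting \cite[Remark 4.19]{BEW}.

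The main obstacle is the exact range of $p$ and of groups for which \cite{BEW} establish the failure. For $p=2$, both functors are the reduced crossed product, by Remark \ref{rem:contractive} and the definition of BG. I expect the statement to implicitly concern $p\neq 2$, or to rely on the BEW remark holding for all non-amenable $G$. If the cited remark needs $C^*_{\ell^p}(G)\neq C^*_r(G)$, I would invoke the known result that $C^*_{\ell^p}(G)$ is exotic for $p>2$ in non-amenable groups containing free subgroups, or else fall back directly on the BEW remark as stated.
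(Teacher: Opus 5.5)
Your first and third paragraphs are exactly the paper's proof: $\rtimes_{\textup{PF}_p}G$ is a correspondence functor by Theorem~\ref{thm:projectionprop}, and $\rtimes_{\textup{BG}_p}G$ is not by \cite[Remark 4.19]{BEW}. With that citation the argument is complete for every $p\in(1,\infty)$, including $p=2$. However, what you add in the second and last paragraphs to substantiate the citation contains several errors, and the argument would fail if you relied on it.

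\textbf{The second paragraph.}

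First, the coaction description is not equivalent to the Brown--Guentner definition. Completing via the coaction with respect to $C^*_{\ell^p}(G)$ gives the Kaliszewski--Landstad--Quigg functor. That functor has the cp-map property and hence \emph{is} a correspondence functor, so with that definition your argument would collapse.

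Second, for non-amenable $G$ there is no unital $G$-C*-algebra carrying both an invariant state and an amenable action. Proposition~\ref{invariantstate}, applied to $\rtimes_u G$ and $\rtimes_r G$, would then make $C^*_u(G)\to C^*_r(G)$ isometric. In particular $\ell^\infty(G)$ has no invariant state, since that would be an invariant mean.

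Third, for $\mathcal{K}(\ell^2(G))$ with $\mathrm{Ad}\,\lambda$ the unitary part of every covariant pair is $\lambda\otimes v\simeq\lambda\otimes 1$, which is an $\ell^p$-representation. So the BG crossed product there is the \emph{universal} one, $\mathcal{K}\otimes C^*_u(G)$, not the reduced one.

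\textbf{The correct witness.} That last example does work once you pass to the linking algebra $\mathcal{K}(\ell^2(G)\oplus\C)$ with $\mathrm{Ad}(\lambda\oplus 1)$, in which both corner projections are $G$-invariant. The unitary part of a covariant pair is now $(\lambda\otimes v)\oplus v$. Hence $v$ factors through $C^*_{\ell^p}(G)$, and the BG-norm on the corner $\mathcal{K}(\ell^2(G))$ is the $\mathcal{K}\otimes C^*_{\ell^p}(G)$-norm. The projection property therefore fails as soon as $C^*_{\ell^p}(G)\neq C^*_u(G)$.

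This inequality holds for every non-amenable $G$ and every $p<\infty$. If the trivial representation were weakly contained in $\ell^p$-representations, it would be weakly contained in their $k$-fold tensor powers. For $k\ge p/2$ these are $\ell^2$-representations, hence weakly contained in $\lambda$, which forces amenability. So the relevant fact is $C^*_{\ell^p}(G)\neq C^*_u(G)$, not $C^*_{\ell^p}(G)\neq C^*_r(G)$. The latter is false for $p<2$, and for $p>2$ you could only supply it for special classes of groups, as you note.

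\textbf{The case $p=2$.} Your worry here rests on a false claim: $\rtimes_{\textup{BG}_2}G$ is not the reduced functor. For trivial actions, $A\rtimes_{\textup{BG}_2}G=A\otimes_{\max}C^*_r(G)$, whereas $A\rtimes_{\textup{PF}_2}G=A\otimes C^*_r(G)$, as the paper remarks right after the corollary. These differ for suitable $A$ because $C^*_r(G)$ is non-nuclear when $G$ is non-amenable. No restriction $p\neq 2$ is needed.
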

\begin{proof}
It is well-known that $\rtimes_{\textup{BG}_p}G$ is not a correspondence functor (see \cite[Remark 4.19]{BEW}).
\end{proof}

One can in fact also give simple examples where the resulting crossed products are different: for example if $A$ is equipped with a trivial action, then
$A \rtimes_{\textup{BG}_2} G = A \otimes_{\textup{max}} C^*_r(G)$, $A \rtimes_{\textup{PF}_2} G = A \otimes C^*_r(G)$. The interest in the statement above stems from the fact that we still do not know an example of a group $G$ for which $C^*(\textup{BG}_p(G)) \neq C^*(\textup{PF}^*_p(G))$.

\section{Non-triviality of the construction} \label{sec:examples}

In this final section, we shall discuss several examples in which our functor leads to genuinely exotic crossed products.

We begin with a straightforward observation: in the presence of an invariant state on $A$, to show that the crossed products $C^*(\textup{PF}^*_{p}(G;A))$ are exotic, one may simply use the corresponding fact for the algebras $C^*(\textup{PF}^*_p(G))$.

\begin{thm}\label{T:exotic C*-alg imply exotic cross product}
Suppose that $A$ is a unital $G$-C*-algebra which admits a $G$-invariant state. Assume further that $p, p' \in [2,\infty), p <p'$ are such that the canonical contractive map $q\colon C^*(\textup{PF}^*_{p'}(G)) \to C^*(\textup{PF}^*_{p}(G)) $ is not isometric. Then the corresponding map $q_A\colon C^*(\textup{PF}^*_{p'}(G;A)) \to C^*(\textup{PF}^*_{p}(G;A)) $ is also not isometric.
\end{thm}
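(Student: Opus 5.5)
We argue by contraposition, reducing to the group case via Proposition \ref{invariantstate} and Theorem \ref{thm:coincide}. First we check that the canonical map $q_A$ exists. For $p<p'$ in $[2,\infty)$ the conjugate exponents satisfy $q'<q\le 2$. Corollary \ref{prop:monotone} then gives $\|f\|_{\PF{q}{G}{A}}\le\|f\|_{\PF{q'}{G}{A}}$. Since $\PF{p}{G}{A}=\PF{q}{G}{A}$ (Remark \ref{rem:contractive}), this yields $\|f\|_{\PF{p}{G}{A}}\le \|f\|_{\PF{p'}{G}{A}}$ for $f\in\ell^1(G;A)$. Passing to enveloping C*-algebras, we get a surjective $*$-homomorphism $q_A\colon A\rtimes_{\textup{PF}_{p'}}G\to A\rtimes_{\textup{PF}_p}G$ extending the identity on $\ell^1(G;A)$. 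Specialising to $A=\C$ and using Theorem \ref{thm:coincide}, we recover the map $q$.

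Next, let $\phi$ be the $G$-invariant state on $A$. By Theorem \ref{thm:projectionprop}, both $\rtimes_{\textup{PF}_p}G$ and $\rtimes_{\textup{PF}_{p'}}G$ are correspondence functors, so Proposition \ref{invariantstate} applies to each. Thus the canonical maps $\iota_{G,p}\colon C^*_{\textup{PF}_p}(G)\to A\rtimes_{\textup{PF}_p}G$ and $\iota_{G,p'}\colon C^*_{\textup{PF}_{p'}}(G)\to A\rtimes_{\textup{PF}_{p'}}G$ are isometric. Here $C^*_{\textup{PF}_p}(G)=\C\rtimes_{\textup{PF}_p}G=C^*(\PF{p}{G}{\C})=C^*(\textup{PF}^*_p(G))$ by Theorem \ref{thm:coincide}. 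On finitely supported $f\in C_c(G)$, $\iota_{G,\cdot}(f)$ is the element $s\mapsto f(s)1_A$ of $A\rtimes_{\mathrm{alg}}G$, so the square
\[
q_A\circ \iota_{G,p'} = \iota_{G,p}\circ q
\]
commutes on $C_c(G)$, and hence everywhere by continuity.

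Now suppose $q_A$ were isometric. Then for $f\in C_c(G)$,
\[
\|q(f)\|=\|\iota_{G,p}(q(f))\|=\|q_A(\iota_{G,p'}(f))\|=\|\iota_{G,p'}(f)\|=\|f\|_{C^*(\textup{PF}^*_{p'}(G))},
\]
and density of $C_c(G)$ makes $q$ isometric, contradicting the hypothesis.

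The main obstacle is the rigorous identification of the group algebra sitting inside the crossed product. Proposition \ref{invariantstate} is phrased via $\iota_{G,\mu}$ on $C^*_\mu(G)$, and this requires that the map exist, which uses functoriality for generalised morphisms (the embedding $\C\to\mathcal M(A)$). Since $A$ is unital, $\C\to A$ is an honest equivariant $*$-homomorphism, so Proposition \ref{P:vector value extension of homomorphism} and the C*-envelope functor already give $\iota_{G,\mu}$ directly. Its compatibility with $q$ and $q_A$ on $C_c(G)$ is then immediate. I would double-check that the conditional-expectation map used in Proposition \ref{invariantstate}, $\phi\rtimes G$, is available through the cp-map property, which Theorem \ref{thm:projectionprop} together with Theorem \ref{thm:char_correspondence_func} provides.
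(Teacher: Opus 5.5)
Your proof is correct and takes essentially the same approach as the paper, which simply records the theorem as an immediate consequence of Proposition~\ref{invariantstate}, Theorem~\ref{thm:projectionprop} and Theorem~\ref{thm:coincide}. You spell out the steps the paper leaves implicit: the existence of $q_A$ via Corollary~\ref{prop:monotone} and the equality $\PF{p}{G}{A}=\PF{q}{G}{A}$ for conjugate indices, and the commuting square $q_A\circ\iota_{G,p'}=\iota_{G,p}\circ q$, which transfers isometry of $q_A$ to $q$.
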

\begin{proof}
This is an immediate consequence of Proposition \ref{invariantstate}, Theorem \ref{thm:projectionprop} and Theorem \ref{thm:coincide}.
\end{proof}

The above proposition shows that one can apply the results of \cite{SW2} (particularly, \cite[Corollary 4.3 and Corollary 5.12]{SW2}) to show that many classes of the crossed products we constructed are, in fact, exotic. We also have a similar fact for trivial actions (which naturally admit invariant states), as follows already from  Proposition \ref{prop:triv} (via Theorem \ref{thm:projectionprop} and Theorem \ref{thm:coincide}, as we can view  $C^*(\textup{PF}^*_p(G;A))$ as a C*-completion of $A \odot C^*(\textup{PF}^*_p(G))$. Note that we always have $A\subseteq C^*(\textup{PF}^*_p(G;A))$, as $\rtimes_{\textup{PF}_p} G$ is a crossed product functor (see the discussion in \cite[Section 2.1]{BEW}).

We would like to show that in some cases we can use the methods developed in \cite{SW2} also for actions which do not admit invariant states.
We first need a definition, introduced in \cite[Definition 4.1]{SW2}, which presents a modest strengthening of the Haagerup property for groups.

\begin{defn}
Let $B(G)=C^*(G)^*$	be the Fourier-Stieltjes algebra of $G$. %, and let $B_{\ell^p}(G)=(B(G)\cap \ell^p(G))^{\sigma(B(G),C^*(G))}$.
Then $G$ has the \emph{integrable Haagerup property} if
	$$ B(G)=\overline{\bigcup_{1\leq p<\infty}(B(G)\cap \ell^p(G))}^{\,\sigma(B(G),C^*(G))}.$$
	Equivalently, $G$ has the integrable Haagerup property if there exists a net of positive definite functions% (on $G$)
	$$\{\phi_i\}_{i \in I}\subseteq \bigcup_{1\leq p<\infty}\ell^p(G)$$
	so that $\phi_i\xrightarrow[ i \in I]{} 1$ uniformly on finite subsets of $G$. Again equivalently, there is a proper conditionally negative function $L$ on $G$ such that for all $\beta\in (0,1)$,
$$\varphi_\beta:=\beta^L\in \bigcup_{1\leq p<\infty}\ell^p(G).$$
\end{defn}

We also recall a fact established in \cite[Lemma 3.2]{SW2}: $\ell^p$-integrable positive definite functions on $G$ give rise to bounded positive functionals on $C^*(\textup{PF}^*_p(G))$.

Before we formulate the next theorem, we should warn the reader that we shall use the notation of the type $C^*(\textup{PF}^*_p(G;A)) \neq A\rtimes_r G$ to say that the corresponding canonical map is not isometric (and not that the C*-algebras in question are not abstractly isomorphic, which seems to be much more difficult to establish).

\begin{thm} \label{thm:Haagerup}
  Suppose that $A$ is a $G$-C*-algebra such that
  $$A\rtimes_u G\neq A\rtimes_r G.$$ Then
there is $p\in (2,\infty)$ such that
  \begin{align*}
    A\rtimes_u G\neq C^*(\textup{PF}^*_p(G;A)).
  \end{align*}
If moreover $G$ has the integrable Haagerup property, then
  there is $p\in (2,\infty)$ such that
  \begin{align*}
 C^*(\textup{PF}^*_p(G;A)) \neq A\rtimes_r G.
  \end{align*}
\end{thm}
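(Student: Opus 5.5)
The plan is to argue both parts by contradiction. Throughout, I use that on $\ell^1(G;A)$ the norm of the C$^*$-envelope is dominated by the Banach norm: $\|f\|_{C^*(\textup{PF}^*_p(G;A))}\le \|f\|_{\textup{PF}^*_p(G;A)}$. Statements of the form $X\neq Y$ concern the canonical quotient maps, and since these are identities on the dense subalgebra $A\rtimes_{\mathrm{alg}}G$, it suffices to compare norms of finitely supported $f$.

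\emph{First part.} Suppose that $A\rtimes_u G = C^*(\textup{PF}^*_p(G;A))$ isometrically for every $p\in(2,\infty)$. Then for every $f\in A\rtimes_{\mathrm{alg}}G$ and every $p>2$,
\[
\|f\|_{A\rtimes_u G}=\|f\|_{C^*(\textup{PF}^*_p(G;A))}\le \|f\|_{\textup{PF}^*_p(G;A)}.
\]
Fix some $p_1>2$ and apply Proposition \ref{CP:cov:interpolationPseodo} with $p_0=2$ and $p\in(2,p_1)$. Using $\textup{PF}^*_2(G;A)=A\rtimes_r G$ (Remark \ref{rem:contractive}) and $\|f\|_{\textup{PF}^*_{p_1}(G;A)}\le\|f\|_1$, this gives
\[
\|f\|_{\textup{PF}^*_p(G;A)}\le \|f\|_{A\rtimes_r G}^{1-\theta}\|f\|_1^{\theta}.
\]
As $p\downarrow 2$ we have $\theta\to 0$, and $\|f\|_1<\infty$ since $f$ is finitely supported. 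Hence $\|f\|_{A\rtimes_u G}\le\|f\|_{A\rtimes_r G}$, which makes $\Lambda_A$ isometric and contradicts the hypothesis. Strictly speaking, the interpolation proposition is stated for $p_0>1$ with $p_0<p_1$, so $p_0=2$ is admissible.

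\emph{Second part.} Suppose instead that $C^*(\textup{PF}^*_p(G;A))=A\rtimes_r G$ for every $p\in(2,\infty)$. By the integrable Haagerup property, take the positive definite functions $\varphi_\beta=\beta^L$ with $\varphi_\beta(e)=1$ and $\varphi_\beta\in\ell^{r(\beta)}(G)$ for some $r(\beta)<\infty$. Then $\varphi_\beta\to 1$ pointwise as $\beta\to 1$, since $L$ is finite-valued.

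By \cite[Lemma 3.2]{SW2}, an $\ell^{r}$-integrable positive definite function defines a bounded positive functional on $C^*(\textup{PF}^*_p(G))$ for suitable $p$, and I expect any $p\ge\max(r,2)$ to work. Positivity forces this functional to be a state, so its norm is $\varphi_\beta(e)=1$. Choose such a $p=p(\beta)$. By Theorem \ref{thm:coincide}, the functional lives on $C^*_\mu(G)$ for $\mu=\textup{PF}_{p}$, and by Theorem \ref{thm:projectionprop} this functor is a correspondence functor. Proposition \ref{Schurrevisited} then gives a contraction $\widetilde{m_{\varphi_\beta}}\colon C^*(\textup{PF}^*_{p}(G;A))\to A\rtimes_u G$ with $f\mapsto\varphi_\beta f$. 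Combined with the assumption, this yields, for every finitely supported $f$,
\[
\|\varphi_\beta f\|_{A\rtimes_u G}\le \|f\|_{A\rtimes_r G}.
\]
As $\beta\to1$, we have $\varphi_\beta f\to f$ in $\ell^1(G;A)$, hence also in $A\rtimes_u G$. So $\|f\|_{A\rtimes_u G}\le\|f\|_{A\rtimes_r G}$, contradicting $A\rtimes_u G\neq A\rtimes_r G$.

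The main point to check is the precise range of $p$ in \cite[Lemma 3.2]{SW2}, namely that $\ell^r$-integrability gives boundedness on $C^*(\textup{PF}^*_p(G))$ for all large $p$. It is worth noting that the contradiction hypothesis covers every $p>2$, so it is enough to have a single admissible $p$ for each $\beta$. I also need this bound to hold with norm $1$, and positivity is what gives that.
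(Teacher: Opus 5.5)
Your proposal is correct and follows the paper's proof essentially step for step. For the first part you interpolate between $p_0=2$ and a large exponent and let $p\downarrow 2$; for the second you use the Schur multipliers $\widetilde{m_{\varphi_\beta}}\colon C^*(\textup{PF}^*_p(G;A))\to A\rtimes_u G$ from Proposition \ref{Schurrevisited}, with the functional supplied by \cite[Lemma 3.2]{SW2} and Theorem \ref{thm:coincide}, and then let $\beta\to 1$. The only cosmetic difference is that you keep $p_1$ fixed and send $p\to 2$, whereas the paper lets $p_1\to\infty$ and uses monotonicity to identify $\|f\|_{A\rtimes_r G}$ with $\inf_{p>2}\|f\|_{C^*(\textup{PF}^*_p(G;A))}$.
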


\begin{proof}
Let $f\in \ell^1(G;A)$. It follows from the interpolation relation in Proposition \ref{CP:cov:interpolationPseodo}, applied for $p_0=2$ and $p_1$ tending to infinity, that for every $p\in (2,\infty)$ and $\theta=2/p$, we have
  $$\|f\|_{C^*(\textup{PF}^*_p(G;A))}\leq \|f\|_{\textup{PF}^*_p(G;A)}\leq \|f\|^\theta_{A\rtimes_r G} \|f\|^{1-\theta}_{\ell^1(G;A)}.$$
  Hence, by Proposition \ref{prop:monotone},
  $$\|f\|_{A\rtimes_r G} \leq \inf_{p> 2} \|f\|_{C^*(\textup{PF}^*_{p}(G;A))}=\lim_{p\to 2^+} \|f\|_{C^*(\textup{PF}^*_{p}(G;A))}\leq \|f\|_{A\rtimes_r G}$$
  so that
  $$\|f\|_{A\rtimes_r G} = \inf_{p> 2} \|f\|_{C^*(\textup{PF}^*_{p}(G;A))}.$$
  Thus our hypothesis implies that for some $f\in \ell^1(G;A)$ and some $p\in (2,\infty)$, we must have
  \begin{align*}
   \|f\|_{A\rtimes_u G}\neq \|f\|_{C^*(\textup{PF}^*_{p}(G;A))}.
  \end{align*}

 Suppose now that $G$ has the integrable Haagerup property and that for every $p\in [2,\infty)$, we have
  $$C^*(\textup{PF}^*_p(G;A))= A\rtimes_r G.$$
  Since $G$ has the integrable Haagerup property, there is a proper conditionally negative function $L$ on $G$ such that for all $\beta\in (0,1)$,
$$\varphi_\beta:=\beta^L\in \bigcup_{1\leq p<\infty}\ell^p(G).$$ 
Now fix $\beta\in (0,1)$ and $p\geq 2$ such that $\varphi_\beta\in \ell^p(G)$. Suppose that $\widetilde{m_{\varphi_\beta}}$ is the associated map defined in \eqref{usualSchur2}. By Proposition \ref{Schurrevisited} and the discussion before the theorem, $\widetilde{m_{\varphi_\beta}}$  extends to a (completely) contractive map from $A\rtimes_r G=C^*(\textup{PF}^*_p(G;A))$ into $A\rtimes_u G$. Thus, for every $f\in \ell^1(G;A)$, we have
  \begin{align*}
    \|f\|_{A\rtimes_u G} &= \lim_{\beta\to 1^-} \|\widetilde{m_{\varphi_\beta}}(f)\|_{A\rtimes_u G}
\leq  \|f\|_{A\rtimes_r G}.
  \end{align*}
  This is a contradiction, which completes the proof.
\end{proof}

It would be interesting to see whether one could strengthen the statement of the previous theorem and show that, in some cases, $C^*(\textup{PF}^*_p(G;A))$ can produce an exotic crossed product without knowing whether there is an invariant state on $A$. We finish this section by presenting a generic way of producing many such examples. Our method relies on random walks and compact stationary spaces. First, we briefly recall the concepts and the notation we need.

Let $\mu$ be finitely supported probability measure on a countable group $G$. We \emph{always} assume that $\mu$ is {\it non-degenerate}, i.e.~its support generates $G$ as a semigroup. The \emph{Avez entropy} of $\mu$ in $G$, denoted by $h(G,\mu)$,  is defined as
\[
h(G,\mu):=\lim_{n\to \infty} \frac{-1}{n}\sum_{s\in G} \mu^{*n} (s) \, \log \mu^{*n} (s),
\]
where we use the convention that $\mu^{*n}(s)\log \mu^{*n}(s)=0$ whenever $s\notin \supp \, \mu^{*n}$. For a sub-additive function $L$ on $G$ with values in $\R_+$, we define the \emph{speed} of $\mu$ in $G$ with respect to $L$ to be
\[
\ell(G,\mu):=\lim_{n\to \infty} \frac{1}{n}\sum_{s\in G}\mu^{*n} (s) L(s).
\]
Let $X$ be a compact metrizable space such that $G$ acts continuously on $X$, and let $\mu$ be a finitely supported non-degenerate probability measure on $G$. The quasi-invariant probability measure $\xi$ on $X$ is said to be {\it stationary} if $\mu*\xi=\xi$. In this case, we say that $(X,\xi)$ is a compact $(G,\mu)$-space.
%standard probability space equipped with a measurable $G$-action. We assume the action to be non-singular (also called quasi-invariant), i.e.~for every $s\in G$, the measures $s\xi$ and $\xi$ are mutually absolutely continuous (where $(s\xi)(A)=\xi(s^{-1}A)$ for measurable subsets $A \subseteq X$). For $1 \leq p < \infty$,
The \emph{Koopman representation} of $G$ on $L^2(X,\xi)$ is the map
\begin{align*} %\label{Eq:Lp-Kooopman rep-defn}
    \pi_{X}\colon G\to \mc{B}(L^2(X,\xi))
\end{align*}
given by
\begin{align*} %\label{Eq:Lp-Kooopman rep-formula}
[\pi_{X}(s)f](x)=\left[\frac{d(s\xi)}{d\xi}(x)\right]^{1/2} f(s^{-1}x)
\end{align*}
for all $s \in G$, $f\in L^2(X,\xi)$ and $\xi$-a.e.~$x$. Here,
$\frac{d(s\xi)}{d\xi}$ denotes the Radon-Nikodym derivative of $s\xi$ with respect to $\xi$.
The \emph{Furstenberg entropy} of $(X,\xi)$ with respect to $(G,\mu)$
%, going back to \cite{FurstenbergNCRP} (see also \cite{NevoZimmer,Kaim-Vers 1}),
is defined as
\begin{align*} %\label{Eq:Furstenberg entropy}
  h_\mu(X,\xi)=- \sum_{s\in G} \mu(s) \int_X  \log \left[\frac{d(s^{-1}\xi)}{d\xi}(x)\right] d\xi(x).
\end{align*}
It is well known that $h_\mu(X,\xi)\geq 0$, with equality occuring if and only if $\xi$ is $G$-invariant. Also
\begin{align*}
h_{\mu^{*n}}(X,\xi)=nh_\mu(X,\xi)\qquad( n\in \N),
\end{align*}
and
%, it was shown by Kaimanovich and Vershik \cite{Kaim-Vers 1} that if $\mu$ has finite Shannon entropy, then
\begin{align} \label{Eq:maximality of Furstenberg boundary}
h(G,\mu)\geq h_\mu(X,\xi),
\end{align}
where $h(G,\mu)$ is the Avez entropy of $\mu$.
%(\Pi_{\mu},\nu_{\infty})$ is the Poisson boundary of $(G,\mu)$ and $(X,\xi)$ is an arbitrary $(G,\mu)$-stationary space. Moreover, for every $\mu$-boundary $(X,\xi)$ (see e.g.~\cite{Furs-Glas 1} for the terminology),
The equality in \eqref{Eq:maximality of Furstenberg boundary} occurs if and only if $(X,\xi)$ is a measure preserving extension of the Poisson boundary of $(G,\mu)$
%there is a measurable preserving $G$-equivariant map from $(X,\xi)$ onto the Poisson boundary of $(G,\mu)$ 
(see \cite{Furs-Glas 1}).
%$(\Pi_{\mu},\nu_{\infty})$.

We state the following result which is a consequence of \cite[Theorem 3.11]{ASdLSS2} (and also follows from the proof of  \cite[Theorem 4.7]{ASdLSS1}).  The proof is straightforward and we present it here for the sake of completeness.

\begin{prop}\label{P:Lyap Expo dominates entropy-inverse integrable weights}
Let $\mu$ be a finitely supported, non-degenerate probability measure on a countable group $G$, and let 
$\omega\colon G \to (0,\infty)$ be such that $\omega^{-1} = 1/\omega \in \ell^1(G)$. Then 
\begin{align*}
h(G,\mu) \leq \liminf_{n\to \infty} \frac{1}{n}\sum_{s\in G}\mu^{* n} (s) \log \omega (s).
\end{align*}
%Moreover, we can choose $\bar\om$ that satisfying above and also 
%\begin{align}\label{Eq:Lyap Expo-algebraic weight}
%  \bar\om^{-1}*\bar\om^{-1} \leq d\, \bar\om^{-1},
%\end{align}
%where $d$ a positive constant. In this case, we have
%\begin{align*}
%h(G,\eta) = \lim_{n\to \infty} \frac{1}{n}\int_{G}\log \omega (s)d\eta^{*n}(s).
%\end{align*}

\end{prop}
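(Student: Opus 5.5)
The plan is to compare the Shannon entropy of $\mu^{*n}$ with the expectation of $\log\omega$ under $\mu^{*n}$ via Gibbs' inequality (nonnegativity of relative entropy). Set $Z=\|\omega^{-1}\|_1=\sum_{s\in G}\omega(s)^{-1}<\infty$ and let $\nu$ be the probability measure on $G$ given by $\nu(s)=\omega(s)^{-1}/Z$. Since $\omega>0$ everywhere, $\nu$ has full support. In particular $\mu^{*n}$ is absolutely continuous with respect to $\nu$ for every $n$.

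For each $n$, $\mu^{*n}$ is finitely supported because $\mu$ is. Gibbs' inequality $\sum_s \mu^{*n}(s)\log\bigl(\mu^{*n}(s)/\nu(s)\bigr)\ge 0$ gives, with all sums finite,
\begin{align*}
-\sum_{s\in G}\mu^{*n}(s)\log\mu^{*n}(s)\le -\sum_{s\in G}\mu^{*n}(s)\log\nu(s)=\sum_{s\in G}\mu^{*n}(s)\log\omega(s)+\log Z .
\end{align*}
Gibbs' inequality itself follows from Jensen's inequality for the concave function $\log$: summing over the support of $\mu^{*n}$ one gets $\sum\mu^{*n}\log(\nu/\mu^{*n})\le\log\sum_{\supp\mu^{*n}}\nu\le 0$.

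Next divide by $n$ and let $n\to\infty$. The left-hand side divided by $n$ converges to $h(G,\mu)$; the limit exists by subadditivity of the entropy of convolution powers (Fekete's lemma), as implicit in the paper's definition of the Avez entropy. The term $(\log Z)/n$ tends to $0$. Taking $\liminf$ of the right-hand side therefore gives the claimed inequality.

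There is no real obstacle. The only points to check are that $\nu$ is a genuine probability measure, which uses $\omega^{-1}\in\ell^1(G)$, and that the sums defining $\sum\mu^{*n}\log\omega$ are finite, which uses the finite support of $\mu$. No lower bound on $\log\omega$ is needed, since only finitely many terms appear for each $n$.
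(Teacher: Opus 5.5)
Your proof is correct and is the same argument as the paper's: you normalise $\omega^{-1}$ to a probability measure, apply Gibbs' inequality to $\mu^{*n}$ against it, divide by $n$ and take the $\liminf$. Your additive constant $\log Z=\log\|\omega^{-1}\|_1$ is the right one (the paper writes $-C$ where $-\log C$ is meant), though it does not matter once you divide by $n$.
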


\begin{proof}
Let $\eta:=C\omega^{-1}$, where $C=\|\omega^{-1}\|_1^{-1}$. Then $\eta$ is a probability measure on $G$, and so, by Gibbs' Inequality, the relative entropy 
$$D(\mu^{*n}||\eta):=\sum_{s\in G} \mu^{*n}(s)[\log \mu^{*n}(s)-\log \eta(s)] \qquad (n\in \mathbb{N}),$$
is non-negative. Thus
\begin{align*}
-\sum_{s\in G} \mu^{*n}(s)\log \mu^{*n}(s) &\leq -\sum_{s\in G} \mu^{*n}(s)\log \eta(s)\\ 
&=-C+\sum_{s\in G}\mu^{* n} (s) \log \omega (s).
\end{align*}
The final result follows by dividing both sides of the above inequality by $n$ and taking the liminf.
\end{proof}

Recall the convention introduced before Theorem \ref{thm:Haagerup}.

\begin{prop}\label{P:disticts PF crossed product}
    Let $\F_k$ be the free group on $2 \le k <\infty$ generators, let $p\in \left[2,\infty\right)$, let $\mu$ be a finitely supported, non-degenerate probability measure on $\F_k$, and let $(X,\xi)$ be a compact $(\F_k,\mu)$-space. Then
    \begin{itemize}
   \item[(i)] if $h_\mu(X,\xi)<h(\F_k,\mu)$, then $$C(X)\rtimes_u \F_k \neq C(X)\rtimes_r \F_k;$$
      \item[(ii)] if $h_\mu(X,\xi)<\frac{2}{p} h(\F_k,\mu)$, then $$C^*(\textup{PF}^*_p(\F_k;C(X))) \neq C(X)\rtimes_u \F_k;$$
      \item[(iii)]  if $h_\mu(X,\xi)<h(\F_k,\mu)-\frac{2\log (2k-1)}{p}\ell(\F_k,\mu)$, % where $\ell(\F_k,\mu)=\lim_{n\rightarrow \infty}\dfrac{1}{n}\sum_{s\in F_n}\mu^{*n}(s)\log L(s)$,
     then $$C^*(\textup{PF}^*_p(\F_k;C(X))) \neq C(X)\rtimes_r \F_k.$$
     \end{itemize}
\end{prop}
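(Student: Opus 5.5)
The plan is to produce one explicit positive functional and compare how it behaves under the three norms. The witness will be the Koopman vector functional. Let $\mathbf 1\in L^2(X,\xi)$ be the constant function, and consider the covariant pair $(M,\pi_X)$ on $L^2(X,\xi)$, where $M$ is multiplication. Its vector functional is
\[
\phi(f)=\sum_s \la \mathbf 1, M(f(s))\pi_X(s)\mathbf 1\ra .
\]
It is positive on $C(X)\rtimes_u\F_k$. Restricted to $C^*_u(\F_k)$, it is the positive definite function
\[
\phi_X(s)=\la \mathbf 1,\pi_X(s)\mathbf 1\ra=\int_X \Bigl[\tfrac{d(s\xi)}{d\xi}\Bigr]^{1/2}d\xi .
\]
The key estimate, which I take from \cite{ASdLSS1,ASdLSS2}, is the following. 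Jensen's inequality gives $\log\phi_X(s)\ge -\tfrac12\int\log\tfrac{d(s^{-1}\xi)}{d\xi}\,d\xi$ (up to the correct orientation of $s$). Averaging over $\mu^{*n}$ and using $h_{\mu^{*n}}=nh_\mu$ then gives
\[
\liminf_n \tfrac1n\sum_s\mu^{*n}(s)\log \phi_X(s)^{-1}\le \tfrac12 h_\mu(X,\xi).
\]
Combined with Proposition \ref{P:Lyap Expo dominates entropy-inverse integrable weights}, this shows that $\phi_X^{r}\notin\ell^1$ whenever $\tfrac r2 h_\mu(X,\xi)<h(\F_k,\mu)$. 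Indeed, otherwise we could take $\omega=\phi_X^{-r}$ and get $h(\F_k,\mu)\le \tfrac r2 h_\mu$.

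For (i), suppose $C(X)\rtimes_u\F_k=C(X)\rtimes_r\F_k$. Then $\phi$, and hence $\phi_X$ (obtained by restriction via $\iota_{G}$), would be weakly contained in the regular representation. For the free group, Haagerup's inequality then gives $\phi_X\in\ell^{2+\eps}$ for every $\eps>0$. With $r=2+\eps$ the integrability criterion contradicts $h_\mu(X,\xi)<h(\F_k,\mu)$.

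For (ii), suppose $C^*(\textup{PF}^*_p(\F_k;C(X)))=C(X)\rtimes_u\F_k$. Then $\phi$ extends to this algebra, and its restriction to $C^*(\textup{PF}^*_p(\F_k))$ is $\phi_X$. The restriction is well defined because $\iota_{G,\mu}$ exists, since the functor is a correspondence functor by Theorem \ref{thm:projectionprop}, and because of Theorem \ref{thm:coincide}. By the description from \cite{SW2} of positive definite functions that extend to $C^*(\textup{PF}^*_p(G))$ for $\F_k$, which again uses the rapid decay property, we get $\phi_X\in\ell^{p+\eps}$. The criterion with $r=p+\eps$ contradicts $h_\mu<\tfrac2p h$.

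For (iii), suppose $C^*(\textup{PF}^*_p)=C(X)\rtimes_r\F_k$. Then I apply the $\mu$-to-full Schur multiplier of Proposition \ref{Schurrevisited} with $\psi=\varphi_\beta$, where $\varphi_\beta=\beta^{L}$, $L$ is the word length, and $\beta<1$ is close to $(2k-1)^{-1/p}$ so that $\varphi_\beta\in\ell^p$. By \cite[Lemma 3.2]{SW2} this $\psi$ is a bounded functional on $C^*(\textup{PF}^*_p(\F_k))$. Composing $\widetilde{m_\psi}$ with $\phi$ shows that the product $\phi_X\varphi_\beta$ is positive definite and lies in $C^*_r$-dual territory, so by Haagerup it is in $\ell^{2+\eps}$. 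Applying the Gibbs argument with $\omega=(\phi_X\beta^{L})^{-(2+\eps)}$ produces a speed term $\ell(\F_k,\mu)$ with coefficient $\tfrac{2}{p}\log(2k-1)$. This gives $h\le h_\mu+\tfrac{2\log(2k-1)}{p}\ell$, which is a contradiction.

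I expect the main obstacle to be the bookkeeping in (iii). The exponents $\beta^{L}$ versus $(2k-1)^{-L/p}$ and the $\eps$-limits have to be arranged so that the constants come out exactly as stated. I also need the precise $\ell^{p+\eps}$ characterisation from \cite{SW2} used in (ii).
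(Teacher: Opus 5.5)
\textbf{There is a genuine gap in the integrability step on which all three parts rest.}

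Rapid decay does not give $\phi_X\in\ell^{2+\eps}$, and \cite{SW2} does not give $\phi_X\in\ell^{p+\eps}$. Haagerup's inequality dualises only to a \emph{polynomially weighted} statement: $\sum_s|\psi(s)|^2(1+L(s))^{-\alpha}<\infty$ for $\psi\in C^*_r(\F_k)^*$. Correspondingly, \cite[Theorem 5.11]{SW2} gives $C^*(\textup{PF}^*_p(\F_k))^*\subseteq\ell^p(\F_k,\omega_\alpha^{-1})$ with $\omega_\alpha=(1+L)^\alpha$.

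Unweighted integrability genuinely fails. Take $n\mapsto 1/\log(e+|n|)$ on a cyclic subgroup $\langle a\rangle\cong\mathbb{Z}$; it is even, convex and decreasing to $0$, hence positive definite. Its extension by zero to $\F_k$ is positive definite. Being induced from an amenable subgroup, it lies in $B_r(\F_k)\subseteq C^*(\textup{PF}^*_p(\F_k))^*$. Yet it belongs to no $\ell^r(\F_k)$ with $r<\infty$. You give no argument specific to $\phi_X$, so the assertions that $\phi_X$ (resp.\ $\phi_X\varphi_\beta$) lies in $\ell^{2+\eps}$ or $\ell^{p+\eps}$ are unjustified. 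Without them, the Gibbs step with $\omega=\phi_X^{-r}$ cannot be applied.

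\textbf{The missing idea is to carry the weight along.} Apply Proposition \ref{P:Lyap Expo dominates entropy-inverse integrable weights} with $\omega=\phi_X^{-r}(1+L)^{\alpha}$, taking $r=2$ in (i) and $r=p$ in (ii). In (iii) use $\omega=(\phi_X\beta^{L})^{-2}(1+L)^{\alpha}$. Then note that $\frac1n\sum_s\mu^{*n}(s)\log(1+L(s))\to0$. This is immediate for finitely supported $\mu$, since $L\le Cn$ on $\supp\,\mu^{*n}$; the paper cites \cite[Theorem 3.7]{ASdLSS1}.

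This also removes the $\eps$-bookkeeping you were worried about. In (iii) one gets directly $h(\F_k,\mu)\le h_\mu(X,\xi)-2\ell(\F_k,\mu)\log\beta$, and then lets $\beta\to(2k-1)^{-1/p}$. So repaired, your (ii) and (iii) are exactly the paper's proof.

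\textbf{Part (i) takes a different route from the paper.} Your repaired (i) goes through $\phi_X\in B_r(\F_k)$, rapid decay and Gibbs. The paper instead uses Nevo / Nevo--Sageev: $h_\mu(X,\xi)<h(\F_k,\mu)$ forces the action to be measurably, hence topologically, non-amenable. It then applies Matsumura's theorem for the exact group $\F_k$. That route works for any exact group without rapid decay, but relies on heavy external results. Yours uses only Haagerup's inequality and the entropy comparison already in the paper.

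\textbf{A minor sign slip.} Jensen together with $\phi_X(s)=\phi_X(s^{-1})$ gives $\log\phi_X(s)\ge\tfrac12\int\log\tfrac{d(s^{-1}\xi)}{d\xi}\,d\xi$, without the minus sign you wrote. Your averaged inequality afterwards is nonetheless correct.
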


\begin{proof}
%Let $\{s_1,\ldots,s_k\}$ be a standard generating set of $\F_k$, let $\mu$ be the simple random walk on $\F_k$, and let $(X,\xi)$ be a compact $(\F_k,\mu)$-space.
Let us first fix some terminology. Let $S:=\{s_1,\ldots,s_k\}$ be the standard generating set for $\F_k$, let $L$ be the word-length function on $\F_k$ coming from $S\cup S^{-1}$, and let $\omega_\alpha$ be a function on $\F_k$ defined by
$$\omega_\alpha(s)=(1+L(s))^\alpha \qquad (s\in \F_k,\alpha\geq 0).$$
By well-known results of Haagerup from \cite{Haag}, $L$ is a conditionally negative definite function such that, for $\alpha$ large enough, $\ell^2(\F_k,\omega_\alpha)\subseteq C^*_r(\F_k)$. Also, consider the covariant representation $(\rho,\pi_X)$ of $\ell^1(\F_k)$ and $C(X)$, where $\pi_X$ is the Koopman representation of $\F_k$ and $\rho$ is the multiplication representation of $C(X)$ on $L^2(X,\xi)$, respectively, and let $\mathcal{A}$ be the C*-algebra generated in
$\mathcal{B}(L^2(X,\xi))$ by the image of $\ell^1(\F_k;C(X))$ with respect to the integrated form of $(\rho,\pi_X)$.

We are ready to prove the statements in the proposition.

\smallskip

(i) It follows from \cite[Corollary to Theorem 3.5]{Nevo1} or \cite[Theorem 9.2]{nevosageev} that the action of $\F_k$ on $(X,\xi)$ is not measurably amenable. Hence, it cannot also be topologically amenable, so that $C(X)\rtimes_u \F_k \neq C(X)\rtimes_r \F_k$ by \cite{Matsumura}, as free groups are exact.

(ii) Suppose that
$$C^*(\textup{PF}^*_p(\F_k;C(X))) = C(X)\rtimes_u \F_k.$$ Then, by Theorem \ref{thm:coincide} and Proposition \ref{P:vector value extension of homomorphism} and the fact that $\mathcal{A}$ is a quotient of $C(X)\rtimes_u \F_k$, the identity mapping on $\ell^1(G)$
%    \begin{align*}
%        f\mapsto f\otimes 1_X \ , \ \ell^1(G)\to A
%    \end{align*}
extends to a $*$-homomorphism from $C^*(\textup{PF}^*_p(\F_k))$ onto $C^*_{\pi_X}(\F_k)$, the C*-algebra in $\mathcal{B}(L^2(X,\xi))$ generated by the image of $\pi_X$. In particular, for $\alpha>0$ large enough, the coefficient function 
$$\Xi_X(\cdot):=\la 1_X, \pi_X(\cdot) 1_X\ra$$ on $\F_k$, also known as the Harish-Chandra $\Xi$-function, belongs to
$$C^*_{\pi_X}(\F_k)^*\subseteq C^*(\textup{PF}^*_p(\F_k))^* \subseteq \ell^p(\F_k,\om_\alpha^{-1}),$$ where the last inclusion follows from \cite[Theorem 5.11]{SW2}. In other words,
\begin{align*}
    \Xi_X^p \om_\alpha^{-1}\in \ell^1(\F_k).
\end{align*}
It follows from Proposition \ref{P:Lyap Expo dominates entropy-inverse integrable weights} that 
%\cite{ASdLSS2} (see also the proof of \cite[Theorem 5.2]{ASdLSS1}) that
\begin{align*}%\label{Eq:Lyap expo-weight coming from Harich Chandra function-inverse square integrable-Discretization}
h(\F_k,\mu)& \leq \liminf_{n\rightarrow \infty}\dfrac{1}{n}\sum_{s\in \F_k}\mu^{*n}(s)\log \left[\Xi_X(s)^{-p}\omega_\alpha(s)\right] \\
& = \liminf_{n\rightarrow \infty}\dfrac{1}{n}\sum_{s\in \F_k}\mu^{*n}(s)\left[-p\log \Xi_X(s)+\alpha \log (1+L(s))\right] \ \ \ (*)
\\
& = p\liminf_{n\rightarrow \infty}\dfrac{-1}{n}\sum_{s\in \F_k}\mu^{*n}(s)\log \Xi_X(s) \\
& \leq p\liminf_{n\rightarrow \infty}\dfrac{-1}{n}\sum_{s\in \F_k} \mu^{*n}(s)\int_X  \log\Bigl[ \frac{d(s^{-1}  \xi)}{d\xi}(x)^{1/2}\Bigr] d\xi(x) \\
&=\frac{p}{2}\liminf_{n\rightarrow \infty}\dfrac{1}{n} h_{\mu^{*n}}(X,\xi) \\
& = \frac{p}{2} h_\mu(X,\xi),
\end{align*}
where in ($*$) we used the fact that, by \cite[Theorem 3.7]{ASdLSS1}, 
$$ \lim_{n\rightarrow \infty}\dfrac{1}{n}\sum_{s\in \F_k} \mu^{*n}(s) \log (1+L(s))=0.$$
%where $h_\mu(\Pi_{\mu},\nu_{\infty})$ is the Furstenberg entropy of the $\mu$-random walk on its Poisson boundary.
Hence
\begin{align}\label{Eq:1}
   h_\mu(X,\xi)<\frac{2}{p} h(\F_k,\mu)   \Longrightarrow C^*(\textup{PF}^*_p(\F_k;C(X))) \neq C(X)\rtimes_u \F_k.
\end{align}
(iii) Suppose that $$C^*(\textup{PF}^*_p(\F_k;C(X))) = C(X)\rtimes_r \F_k.$$ By \cite{Haag}, $L$ is conditionally negative definite on $\F_k$ so that, by Sch\"onberg's theorem, for every $\beta\in (0,1)$, $\varphi_\beta=\beta^L$ is a positive definite function on $\F_k$. Also, it is easy to verify that $\varphi_\beta\in \ell^p(\F_k)$ if $\beta <(2k-1)^{-1/p}$. Suppose that $0<\beta <(2k-1)^{-1/p}$ and $\widetilde{m_{\varphi_\beta}}$ is the associated map defined in \eqref{usualSchur2}. By Proposition \ref{Schurrevisited} and \cite[Theorem 5.11]{SW2}, $\widetilde{m_{\varphi_\beta}}$  extends to a (completely) contractive map from $C(X)\rtimes_r \F_k=C^*(\textup{PF}^*_p(\F_k;C(X)))$ to $C(X)\rtimes_u \F_k$. This, together with the fact that $\mathcal{A}$ is a quotient of $C(X)\rtimes_u \F_k$, implies that we have a completely contractive mapping
\begin{align*}
    C^*_r(\F_k) \to C^*_{\pi_X}(\F_k),\quad f\mapsto f\varphi_\beta.
\end{align*}
In particular, using again \cite[Theorem 5.11]{SW2}, for $\alpha>0$ large enough we have
\begin{align*}
    \Xi_X \varphi_\beta \in \ell^2(\F_k,\om_\alpha^{-1}).
\end{align*}

%\begin{align*}
%    \Xi_X \varphi_\beta \om_\alpha^{-1}\in \ell^2(\F_k).
%\end{align*}

Similar to part (ii), it again follows from Proposition \ref{P:Lyap Expo dominates entropy-inverse integrable weights} and \cite[Theorem 3.7]{ASdLSS1} that
%\cite{ASdLSS2} (see also the proof of \cite[Theorem 5.2]{ASdLSS1}) that
\begin{align*}%\label{Eq:Lyap expo-weight coming from Harich Chandra function-inverse square integrable-Discretization}
h(\F_k,\mu)& \leq 2\liminf_{n\rightarrow \infty}\dfrac{-1}{n}\sum_{s\in \F_k}\mu^{*n}(s) \log \left(\Xi_X(s) \beta^{L(s)}\right) \\
& = 2\liminf_{n\rightarrow \infty}\left[\dfrac{-1}{n}\sum_{s\in \F_k}\mu^{*n}(s) \log \Xi_X(s) +\dfrac{-\log \beta}{n}\sum_{s\in \F_k}\mu^{*n}(s) L(s)\right] \\
&= 2\liminf_{n\rightarrow \infty}\left[\dfrac{-1}{n}\sum_{s\in \F_k}\mu^{*n}(s)\log \Xi_X(s)\right]- 2\ell(\F_k,\mu) \log \beta \\
&\leq h_\mu(X,\xi)- 2\ell(\F_k,\mu) \log \beta,
\end{align*}
where $\ell(\F_k,\mu)$ is  the speed of the random walk generated by $\mu$ in $\F_k$. By letting $\beta \to (2k-1)^{-1/p}$, we obtain
\begin{align}%\label{Eq:Lyap expo-weight coming from Harich Chandra function-inverse square integrable-Discretization}
h(\F_k,\mu)\leq h_\mu(X,\xi)+ \frac{2\ell(\F_k,\mu) \log (2k-1)}{p}.
\end{align}
%However, it is known that for simple symmetric random walk on $\F_k$,
%$$h(\F_k,\mu)=\ell(\F_k,\mu) \log (2k-1)=\frac{k-1}{k} \log (2k-1).$$
%Hence we must have
%\begin{align*}
%   \left(1-\frac{2}{p}\right) h(\F_k,\mu) \leq h_\mu(X,\xi).
%\end{align*}
Thus, we can state that
\begin{align}\label{Eq:2}
   h_\mu(X,\xi)<  h(\F_k,\mu)-\frac{2\log (2k-1)}{p}\ell(\F_k,\mu) \Longrightarrow C^*(\textup{PF}^*_p(\F_k;C(X))) \neq C(X)\rtimes_r \F_k.
\end{align}
%(iii) This is an immediate consequence of the fact that (i) implies (ii) if $p>4$.
%, then we can choose $(X,\nu)$ such that $h_\mu(X,\nu)$ satisfies both the relations \eqref{Eq:1} and \eqref{Eq:2}.
%On the other hand, if $C^*(C(X)\cpff) = C(X)\rtimes_r F_n$, then
\end{proof}

We can now exhibit further examples where the functors we introduced in the last section lead to exotic crossed products.

\begin{thm}\label{T:nontrivial PF_p cross C* alg using entropy}
    Let $\F_k$ be the free group on $2\le k < \infty$ generators, and let $\mu$ be a finitely supported, non-degenerate probability measure on $\F_k$. Then there is $p_0\geq 2$ so that for every $p\geq p_0$, there is a compact $(\F_k,\mu)$-space $(X,\xi)$ with $h_\mu(X,\xi)>0$ and
    \begin{align*}
        C(X)\rtimes_r \F_k\neq C^*(\textup{PF}^*_p(\F_k;C(X))) \neq C(X)\rtimes_u \F_k.
    \end{align*}
   % Moreover, $p$ can be chosen to be arbitrary large.
\end{thm}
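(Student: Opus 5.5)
The plan is to deduce the theorem from Proposition \ref{P:disticts PF crossed product} (ii) and (iii). Once we produce a compact $(\F_k,\mu)$-space $(X,\xi)$ whose Furstenberg entropy lies in the window
\[
0<h_\mu(X,\xi)<\min\Bigl\{\tfrac{2}{p}\,h(\F_k,\mu),\; h(\F_k,\mu)-\tfrac{2\log(2k-1)}{p}\,\ell(\F_k,\mu)\Bigr\},
\]
both non-equalities follow at once.

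First we make the window nonempty. Since $\mu$ is non-degenerate on the non-amenable group $\F_k$, we have $h(\F_k,\mu)>0$; the speed $\ell(\F_k,\mu)$ is finite because $\mu$ is finitely supported. Choose $p_0\ge 2$ large enough that
\[
h(\F_k,\mu)-\frac{2\log(2k-1)}{p_0}\,\ell(\F_k,\mu)>0 .
\]
For $p\ge p_0$ the second quantity in the minimum is at least this positive number. The first quantity, $\frac{2}{p}h(\F_k,\mu)$, is positive for every $p$ but tends to $0$. So for each $p\ge p_0$ the interval $(0,c_p)$, with $c_p$ the above minimum, is a nonempty open interval.

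The main step is to realise, for each such $p$, a compact $(\F_k,\mu)$-space with Furstenberg entropy strictly between $0$ and $c_p$. I would use the known results that the Furstenberg entropy spectrum of free groups (for finitely supported non-degenerate $\mu$) contains values arbitrarily close to $0$, indeed an interval $[0,\epsilon)$ or at least a sequence of positive values accumulating at $0$. One route is Nevo's construction via Poisson boundaries of random walks on quotient-like or subgroup coset spaces. Another is the constructions of Bowen and of Hartman--Tamuz giving dense entropy spectrum for free groups, realised on compact models. Concretely, I would take a stationary action of the form $X=\partial(\F_k/H)$ or a relative Poisson boundary for a suitable infinite-index subgroup, or a Poisson bundle over an IRS. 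Entropy can be made small and positive by tuning a parameter, for example by taking the IRS supported on subgroups of increasingly large index. Compactness is harmless: any stationary measurable $G$-space has a compact metrizable topological model with the same Furstenberg entropy.

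Given such $(X,\xi)$ with $0<h_\mu(X,\xi)<c_p$, we have $h_\mu(X,\xi)<\frac{2}{p}h(\F_k,\mu)$, so Proposition \ref{P:disticts PF crossed product}(ii) gives $C^*(\textup{PF}^*_p(\F_k;C(X)))\neq C(X)\rtimes_u\F_k$. We also have $h_\mu(X,\xi)<h(\F_k,\mu)-\frac{2\log(2k-1)}{p}\ell(\F_k,\mu)$, so (iii) gives $C^*(\textup{PF}^*_p(\F_k;C(X)))\neq C(X)\rtimes_r\F_k$. Positivity of entropy means $\xi$ is not invariant, which is the point of the example: no invariant state is used.

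The expected obstacle is exactly the existence of stationary spaces with arbitrarily small positive Furstenberg entropy for an arbitrary finitely supported non-degenerate $\mu$. The cleanest citation would be a "no entropy gap" result for free groups; I would cite it rather than reprove it.
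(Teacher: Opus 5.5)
Your proposal is correct and follows the paper's proof essentially verbatim. Like the paper, you choose $p_0$ so that $h(\F_k,\mu)-\frac{2\log(2k-1)}{p_0}\ell(\F_k,\mu)>0$, take for each $p\ge p_0$ a compact $(\F_k,\mu)$-space with Furstenberg entropy in $(0,c_p)$, and apply Proposition \ref{P:disticts PF crossed product}(ii) and (iii). For the middle step the paper simply cites the entropy-realisation theorem for free groups \cite{HY} (see also \cite{B1}), which for finitely supported non-degenerate $\mu$ gives every value in $(0,h(\F_k,\mu))$ on a compact model, so your tentative constructions are not needed (and an IRS concentrated on finite-index subgroups would give entropy zero, however large the index).
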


\begin{proof}
It is shown in \cite{HY} (see also \cite{B1}, \cite{BZ1}) that for every $\epsilon \in (0,h(\F_k,\mu))$, there is a compact $(G,\mu)$-space $(X,\xi)$ such that $h_\mu(X,\xi)=\epsilon$. Hence if $p_0\geq 2$ is chosen such that $h(\F_k,\mu)-\frac{2\log (2k-1)}{p_0}\ell(\F_k,\mu)>0$, then for every $p\geq p_0$, one can find a compact $(G,\mu)$-space $(X,\xi)$ such that $h_\mu(X,\xi)$ is positive and satisfies both the conditions of Proposition \ref{P:disticts PF crossed product} (ii) and (iii). This completes the proof.
\end{proof}

\begin{rem}\label{R:nontrivial PF_p cross C* alg using entropy}
    The  preceding theorem can be generalized, with a fairly analogous proof, to a much larger class of groups. To see this, let $G$ be a finitely generated, nonamenable group. Suppose further that there is a proper conditionally negative length function $L$ on $G$ such that $G$ has both rapid decay and the integrable Haagerup property with respect to $L$. This class of groups includes free groups, but also many more examples (see \cite[Examples 5.13, 5.15 and Remark 5.14]{SW2}). Under these assumptions for $G$ and $L$, Proposition \ref{P:disticts PF crossed product} can be generalized, with the same proof, to $G$ and $\mu$, for any finitely supported non-degenerate probability measure $\mu$ on $G$. Hence Theorem \ref{T:nontrivial PF_p cross C* alg using entropy} holds as well, provided that one could find compact $(G,\mu)$-spaces with arbitrary small positive entropy. However, the latter fact is true, as it is a consequence of the failure of the entropy gap for $(G,\mu)$. Indeed, it follows from \cite{BHO} and \cite{Nevo1} that the existence of an entropy gap is equivalent to the group having Property (T). However, a nonamenable group with Haagerup property does not have Property (T) so that $(G,\mu)$ does not have an entropy gap. That is, for every $\delta>0$, there is a compact $(G,\mu)$-space $(X,\xi)$ such that $h_\mu(X,\xi)\in (0,\delta)$. Therefore, Theorem \ref{T:nontrivial PF_p cross C* alg using entropy} holds for $(G,\mu)$.
\end{rem}

Finally, we should say that it is not clear to us whether the actions discussed in Theorem \ref{T:nontrivial PF_p cross C* alg using entropy} and the last remark admit invariant states or not. In general it appears difficult to construct natural explicit examples of non-amenable actions with no invariant states (abstractly one may consider the action  of a non-exact $G$ on its Stone-\v{C}ech compactification). In the final theorem, we present one such class of actions; we do not know, however, if our functors lead to exotic crossed products also in this case.

\smallskip

Let $H$ be a connected semisimple Lie group with finite center, and let $P$ be the minimal parabolic subgroup of $H$ so that $H=PK$, where $K$ is the maximal compact subgroup of $H$. Then $H/P$ is the Furstenberg (Poisson) boundary of $H$, and admits a unique $K$-bi invariant probability measure $\nu_P$. More generally, for $P\subseteq Q\subseteq H$ with $Q$ being a parabolic subgroup of $H$, there is a unique $K$-bi invariant probability measure $\nu_Q$ on $H/Q$ (\cite{Fur63}, \cite{NevoZimmer2002}). 
%{\color{red} Some references? Zimmer's book? Or some work of Furstenberg?}

\begin{thm}
Let $H$, $P$, and $Q$ be as above with $P$ a proper subset of $Q$ and $G$ a lattice in $H$. Then  $C(H/Q)\rtimes_u G\neq C(H/Q)\rtimes_r G$ and there is no $G$-invariant state on $C(H/Q)$.     
\end{thm}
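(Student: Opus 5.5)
Both claims follow from two facts: the action of $G$ on $H/Q$ is not amenable, and there is no $G$-invariant probability measure on $H/Q$. For the first claim I use amenability of the action; for the second, states on $C(H/Q)$ are exactly Borel probability measures on the compact space $H/Q$ (Riesz), so a $G$-invariant state is the same as a $G$-invariant probability measure.

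\emph{Step 1 (no invariant measure).} Suppose $m$ is a $G$-invariant probability measure on $H/Q$. Since $G$ is a lattice, I would induce $m$ to an $H$-invariant measure by averaging over $H/G$ with respect to the finite invariant measure. Concretely, choose a Borel fundamental domain $F$ for $G$ in $H$ and define $\tilde m=\int_{H/G} h_*m\,d(hG)$; this is well defined precisely because $m$ is $G$-invariant. The result is an $H$-invariant probability measure on $H/Q$. Restricting to the cocompact subgroup $P$, or more simply using the Iwasawa decomposition with the noncompact split torus $A\subseteq P$, I would then show that no $H$-invariant probability measure on $H/Q$ exists when $Q\neq H$. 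The standard argument is Furstenberg's lemma: an invariant probability measure on a flag variety $H/Q$ is fixed by an element $a\in A$ acting with contracting dynamics. Its mass must concentrate on the attracting fixed point set, and then it is not invariant under the rest of $H$, whose orbit of that point is all of $H/Q$. Alternatively, if $H$ fixed a probability measure on $H/Q$, then $Q$ would be amenable modulo a compact group; more precisely, the stabilizer of a measure on $H/Q$ is cocompact in $H$ only in the trivial case. (I also take from the statement that $Q$ is a proper parabolic subgroup, i.e.\ $Q \neq H$, which is presumably intended.) I expect this step to be the main technical point, and I would first try Furstenberg's lemma, \cite{Fur63}.

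\emph{Step 2 (non-amenability of the action).} Amenability of the action of $G$ on the compact space $H/Q$ would give $C(H/Q)\rtimes_u G=C(H/Q)\rtimes_r G$, so I need to rule it out. I would argue via measurable amenability. By Zimmer, the $G$-action on $H/Q$ with the quasi-invariant class of $\nu_Q$ is amenable iff the $H$-action on $H/Q$ is amenable. That in turn holds iff the stabilizer $Q$ is amenable, and $Q$ is not amenable since $P\subsetneq Q$ means the Levi factor of $Q$ contains a noncompact simple factor. Topological amenability implies measurable amenability for every quasi-invariant measure, so the topological $G$-action is not amenable. Lattices in $H$ are exact (they embed in $H$, which is exact as a connected group), so \cite{Matsumura} applies just as in Proposition \ref{P:disticts PF crossed product}(i) and gives $C(H/Q)\rtimes_u G\neq C(H/Q)\rtimes_r G$.

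\emph{Alternative route.} An alternative for Step 2 uses only the invariant-state statement from Step 1 together with $\nu_Q$: one would estimate the Harish-Chandra function of the Koopman representation as in Proposition \ref{P:disticts PF crossed product}. However, the Zimmer route is cleaner and is the one I would write.
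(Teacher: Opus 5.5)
Your proposal is correct, but it follows a different route from the paper.

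\textbf{The paper's route.} The paper stays inside Furstenberg's random-walk theory. It uses a measure $\mu$ on $G$ for which $(H/P,\nu_P)$ is the Poisson boundary and $\nu_Q$ is the \emph{unique} $\mu$-stationary measure on $H/Q$. A $G$-invariant measure would be stationary, hence equal to $\nu_Q$, which is not invariant. Since $(H/Q,\nu_Q)$ is a proper $\mu$-boundary, the Nevo--Sageev criterion shows it is not measurably amenable, and this gives $C(H/Q)\rtimes_u G\neq C(H/Q)\rtimes_r G$.

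\textbf{Your route.} You transfer both questions from $G$ to $H$ using the finite invariant measure on $H/G$: averaging $h_*m$ in Step~1, and Zimmer's lattice/ambient-group equivalence in Step~2. You then settle them by the structure of parabolics. First, $H/Q$ carries no $H$-invariant probability measure when $Q\neq H$. Second, $Q\supsetneq P$ is non-amenable because its Levi factor has a non-compact simple part.

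\textbf{Comparison.} Your argument is more elementary and self-contained: no random walk with prescribed boundary, no uniqueness of stationary measures, no Nevo--Sageev. It also isolates where each hypothesis enters: $P\subsetneq Q$ for non-amenability, and $Q\neq H$ for the absence of an invariant state. You are right that the latter is needed; the paper uses it tacitly when asserting that $\nu_Q$ is not $G$-invariant. In exchange, the paper's route exhibits $(H/Q,\nu_Q)$ as a stationary $\mu$-boundary. That is the framework of the final section, which one would need to test these actions for exoticity of $\rtimes_{\mathrm{PF}_p}G$ by entropy methods.

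\textbf{Two points to tighten.}

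In Step~1, drop the ``alternatively'' sentence, which is garbled: the stabilizer of an $H$-invariant measure is all of $H$. Use one of these instead:
\begin{itemize}
\item $H/Q$ admits an $H$-invariant measure iff $\Delta_H|_Q=\Delta_Q$, which fails since $H$ is unimodular and a proper parabolic is not.
\item Phrase the dynamics correctly: a measure invariant under a regular $a\in A$ is supported on the finite fixed-point set of $a$. This is incompatible with invariance under the transitive $H$-action on the infinite space $H/Q$.
\end{itemize}

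In Step~2, the direction of Zimmer's equivalence you need ($G$-amenable $\Rightarrow$ $H$-amenable) is the one that uses finite covolume. One clean way to see it is
\[
H\times_G(H/Q)\cong H/G\times H/Q\cong H\times_Q(H/G).
\]
So amenability of the $G$-space $H/Q$ would make the $Q$-space $H/G$ amenable. Since $H/G$ carries a finite $Q$-invariant measure, $Q$ would then be amenable, a contradiction.
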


\begin{proof}
	It follows from the Furstenberg work in \cite{Fur67} and \cite{Fur71} that there is a probability measure $\mu$ on $G$ with finite first moment w.r.t the Riemannian metric on $H$ such that $\nu_P$ is the unique $\mu$-stationary measure on $H/P$ and $(H/P,\nu_P)$ is the Furstenberg boundary of $(G,\mu)$. Moreover, $\nu_Q$ is the \emph{unique} $\mu$-stationary measure on $H/Q$ and $(H/Q,\nu_Q)$ is a $\mu$-boundary (in fact, every $\mu$-boundary is of this form). This, in particular, implies that $C(H/Q)$ has no $G$-invariant probability measure as every such measure must also be $\mu$-stationary, thus equal to $\nu_Q$, which is not possible ($\nu_Q$ is quasi-invariant but not $G$-invariant). Furthermore, since $(H/Q,\nu_Q)$ is a proper $\mu$-boundary, % so that its Furstenberg boundary is strictly smaller than by 
    \cite[Theorem 9.2]{nevosageev} yields that $(H/Q,\nu_Q)$ is not measurably amenable (as a $G$-space). Hence the action of $G$ on $H/Q$ is not topologically amenable so that  $C(H/Q)\rtimes_u G\neq C(H/Q)\rtimes_r G$.
	
%	Now, let $\sigma_Q:\Gamma\to B(L^2(H/Q,\nu_Q))$ be the corresponding Koopman representation and $\pi: C(H/Q) \to B(L^2(H/Q,\nu_Q))$ be the multiplication representation. If $C(H/Q)\rtimes_u \Gamma\cong C(H/Q)\rtimes_r \Gamma$, then for the covariant representation $(L^2(H/Q,\nu_Q,\pi,\sigma_Q)$, we would have% 
	%\begin{align*}
	%	\|\sigma_Q(f)\|=\|(\pi\rtimes \sigma_Q)(1_{H/Q}\otimes f)\|\leq \|(\pi\rtimes \lambda_\Gamma)(1_{H/Q}\otimes f)\|=\|\lambda_\Gamma(f)\|. 
%	\end{align*}
%	for all $f\in \ell^1(G)$. This in turn implies that $\sigma_Q$ is weakly contained in $\lambda_\Gamma$. But by \cite{ASdLSS2} we would then have that $h_\mu(H/P,\nu_P)=h_\mu(H/Q,\nu_Q)$, where $h_\mu(\cdot,\cdot)$ is the Furstenberg entropy. Then again by a result of Furstenberg, we would have that $(H/P,\nu_P)=(H/Q,\nu_Q)$. The last fact is not possible, as $(H/Q,\nu_Q)$ is a proper $\mu$-boundary. 
%	 Hence $C(H/Q)\rtimes_u \Gamma\ncong C(H/Q)\rtimes_r \Gamma$.
\end{proof}

\section*{Acknowledgments}
This material is partially based upon work supported by the Swedish Research Council under grant no. 2021-06594 while ES and AS  were in residence at Institute Mittag-Leffler in Djursholm, Sweden during the programme `Operator Algebras and Quantum Information' in March 2026.

JK was partially supported by FWO grant 1246624N. AS was partially supported by the National Science Center Grant OPUS-29 UMO-
2025/57/B/ST1/00057. ES was partially supported by NSERC Discovery Grant RGPIN-2025-04833. ES would like to thank the Wenner-Grenn Foundation (the project GFOh2024-0025) which supported his sabbatical visit to Chalmers University of Technology, 2025-2026.

%\section{List of outstanding questions}

%Working list of questions:
%\begin{itemize}
%\item[--] check whether the independence of the norm $\|\cdot\|_{\pi,p} $ on the (faithful) representation $\pi$ of $A$ holds beyond $A= \C$ -- this seems difficult!
%\item[--] where do we put Jacek's result -- in the appendix? Or inside the main text, explaining that this is due to him?
%\item[--] how does this functor compare to Brown-Guentner crossed product functors? I.e.- when they coincide, beyond the amenable case?
%\item[--] can we show the continuity result for the $C^*$-version of the $\ell^p$-regular norm? In some special cases?

%\end{itemize}

\end{document}